\date{}
\providecommand{\abs}[1]{\lvert#1\rvert}
\theoremstyle{plain}
\newtheorem{Theorem}{Theorem}[section]
\newtheorem{Corollary}[Theorem]{Corollary}
\newtheorem{Lemma}[Theorem]{Lemma}
\newtheorem{Proposition}[Theorem]{Proposition}
\newtheorem{Definition}[Theorem]{Definition}
\newtheorem{Remark}[Theorem]{Remark}
\newcommand{\ii}{\operatorname{i}}
\renewcommand{\d}{\operatorname{d}}
\newcommand{\sgn}{\operatorname{sgn}}
\newcommand{\diag}{\operatorname{diag}}
\DeclarePairedDelimiter{\ceil}{\lceil}{\rceil}
\newcommand{\C}{\mathbb{C}}
\newcommand{\N}{\mathbb{N}}
\begin{document}
	
		\title[Geronimus Perturbations for Mixed Multiple Orthogonality]{General Geronimus Perturbations for  \\Mixed Multiple Orthogonal Polynomials}

	\author[M Mañas]{Manuel Mañas$^{1}$}
	
	\author[M Rojas]{Miguel Rojas$^{2}$}
	\address{Departamento de Física Teórica, Universidad Complutense de Madrid, Plaza Ciencias 1, 28040-Madrid, Spain}
	\email{$^{1}$manuel.manas@ucm.es}
	\email{$^{2}$migroj01@ucm.es}

	\keywords{Mixed multiple orthogonal polynomials, Geronimus perturbations, Christoffel formulas, spectral theory of matrix polynomials}

		\begin{abstract}
General Geronimus transformations, defined by regular matrix polynomials that are neither required to be monic nor restricted by the rank of their leading coefficients, are applied through both right and left multiplication to a rectangular matrix of measures associated with mixed multiple orthogonal polynomials. These transformations produce Christoffel-type formulas that establish relationships between the perturbed and original polynomials. Moreover, it is proven that the existence of Geronimus-perturbed orthogonality is equivalent to the non-cancellation of certain \(\tau\)-determinants. The effect of these transformations on the Markov--Stieltjes matrix functions is also determined. As a case study, we examine the Jacobi--Piñeiro orthogonal polynomials with three weights.
	\end{abstract}
	
	\subjclass{42C05, 33C45, 33C47, 47B39, 47B36}
	\maketitle
\tableofcontents
        \section{Introduction}
Multiple orthogonal polynomials (MOPs) form a general class of polynomials associated with multiple weight functions, unlike classical orthogonal polynomials tied to a single weight. MOPs play essential roles in numerical analysis, approximation theory, and mathematical physics, owing to their capacity to address complex problems involving simultaneous orthogonality conditions.

MOPs are historically linked to Hermite–Padé approximations and constructive function theory. For a detailed introduction, Nikishin and Sorokin's book \cite{nikishin_sorokin} and Van Assche's chapter in \cite[Ch. 23]{Ismail} provide excellent starting points. Connections between MOPs and integrable systems are explored in \cite{afm}, while \cite{andrei_walter} offers a more accessible introduction. Studies on the asymptotic behavior of their zeros are presented in \cite{Aptekarev_Kaliaguine_Lopez}, and Gauss–Borel perspectives are discussed in \cite{afm}. Applications in random matrix theory are detailed in \cite{Bleher_Kuijlaars}.

Mixed-type MOPs, along with the corresponding Riemann–Hilbert problems, find applications in contexts such as Brownian bridges and non-intersecting Brownian motions \cite{Evi_Arno}, and in multicomponent Toda systems \cite{adler,afm}. In number theory, they were used in Apéry’s proof of the irrationality of $\zeta(3)$ \cite{Apery} and in proving the irrationality of certain values of the $\zeta$-function at odd integers \cite{Ball_Rivoal}.

The logarithmic and ratio asymptotics of linear forms constructed from Nikishin systems, which satisfy orthogonality conditions with respect to a second Nikishin system, were explored in \cite{ulises}. Further research in \cite{ulises2} examined a broad class of mixed-type MOPs and the properties of their corresponding zeros.

Recent work highlights the role of mixed-type MOPs in the Favard spectral description of banded semi-infinite matrices, explored in studies like \cite{aim, phys-scrip, BTP, Contemporary}, with additional insights in \cite{laa}. These polynomials are also significant in the analysis of Markov chains and random walks that go beyond birth and death processes, as shown in \cite{CRM,finite,hypergeometric,JP}.

In 1858, the German mathematician Elwin Christoffel \cite{christoffel} studied Gaussian quadrature rules, aiming to find explicit formulas relating orthogonal polynomial sequences under different measures. Specifically, he investigated the Lebesgue measure $\mathrm{d}\mu = \mathrm{d}x$ and a modified measure $\d\hat{\mu}(x) = p(x) \d\mu(x)$, where $p(x) = (x - q_1) \cdots (x - q_N)$ is a polynomial with roots outside the support of $\mathrm{d}\mu$. Christoffel sought to understand the distribution of zeros, or nodes, in such quadrature rules \cite{Uva}. The resulting Christoffel formula is discussed in classical textbooks on orthogonal polynomials, such as \cite{Chi,Sze,Gaut}. A fresher overview of Christoffel and Geronimus transformations can be found in \cite{manas}.

These transformations extend beyond measures to involve linear functionals \cite{ahi,Chi,Sze}. For a moment functional $u$, its canonical Christoffel transformation is defined as $\hat{u} = (x - a)u$, where $a \in \mathbb{R}$ \cite{Bue1, Chi, Yoon}. Conversely, the right inverse of a Christoffel transformation is known as the Geronimus transformation. For a moment functional $u$, its Geronimus transformation yields a new moment functional $\check{u}$ that satisfies $(x - a)\check{u} = u$. The Geronimus transformation involves a free parameter \cite{Geronimus,Maro}, while a general Christoffel transformation’s right inverse is called a multiple Geronimus transformation \cite{DereM}.

These perturbations, including Christoffel and Geronimus, belong to the broader category of Darboux transformations, initially introduced in the context of integrable systems \cite{matveev}. Gaston Darboux formally addressed these transformations in 1878 while studying Sturm–Liouville theory \cite{darboux2,moutard}. The factorization of Jacobi matrices akin to these transformations has been investigated in the context of orthogonal polynomials on the real line \cite{Bue1,Yoon} and is essential in the study of bispectral problems \cite{gru,gru2}.

The canonical Christoffel transformations are closely related to the $LU$ factorization (or its flipped version, $UL$ factorization) of the Jacobi matrix, which results from the three-term recurrence relation of a sequence of monic polynomials orthogonal with respect to a nontrivial probability measure $\mu$. This factorization allows the derivation of another Jacobi matrix $\hat{J}$, along with its sequence of monic polynomials $\{\hat{P}_{n}(x)\}_{n=0}^{\infty}$, which are orthogonal with respect to the canonical Christoffel-transformed measure $\mu$.

In the context of a moment functional $u$, the Markov–Stieltjes function $S(x)$ is fundamental in orthogonal polynomial theory. It is closely related to the measure corresponding to $u$ and its rational Padé approximations \cite{Bre,Karl}. For the Christoffel transformation $\hat{u}$ of the moment functional $u$, the Stieltjes function takes the form $\hat{S}(x) = (x - a)S(x) - u_0$, which is a specific case of spectral linear transformations \cite{Zhe}.

The first author and collaborators have previously studied Christoffel and Geronimus transformations in the context of matrix polynomials. The research began with the application of Christoffel transformations to monic matrix orthogonal polynomials, as detailed in \cite{AAGMM}. Subsequently, Geronimus transformations in the matrix case were investigated in \cite{AGMM}, where spectral techniques were introduced for monic perturbations, while the non-monic case was addressed without employing spectral methods. In \cite{AGMM2}, the Geronimus–Uvarov framework was explored, with a particular focus on its connections to non-Abelian Toda lattices.

Moreover, in \cite{bfm}, the Christoffel and Geronimus perturbations of two weights for non-mixed MOPs was studied. It was  presented connection formulas between type II multiple orthogonal polynomials, type I linear forms, and vector Stieltjes–Markov functions. The perturbation matrix polynomials in this case were not necessarily monic but belonged to a restricted class.

In \cite{Manas_Rojas}, we explored general Christoffel transformations for mixed multiple orthogonal polynomials. By applying regular matrix polynomials—neither required to be monic nor constrained by the rank of their leading coefficients—through both right and left multiplication to a rectangular matrix of measures, we derived Christoffel-type formulas that relate the perturbed polynomials to the original ones. Using the divisibility theorem for matrix polynomials, we also established a criterion for the existence of perturbed orthogonality, characterized by the non-cancellation of specific $\tau$-determinants.

In this paper, we extend our investigation to examine general Geronimus transformations. These transformations, defined by regular matrix polynomials that are neither required to be monic nor limited by the rank of their leading coefficients, are applied via both right and left multiplication to a rectangular matrix of measures associated with mixed multiple orthogonal polynomials. The resulting transformations lead to Christoffel-type formulas that connect the perturbed polynomials to the original ones. Additionally, we prove that the existence of Geronimus-perturbed orthogonality is equivalent the non-cancellation of certain $\tau$-determinants. 
The impact of these transformations on the Markov--Stieltjes matrix functions is also analyzed. As a case study, we focus on the Jacobi--Piñeiro orthogonal polynomials with three weights.

\subsection{Mixed Multiple Orthogonal Polynomials on the Step Line}

For  $p,q\in\N$, consider the setup where a rectangular matrix of measures is defined:
\[
\d	\mu=\begin{bNiceMatrix}
	\d\mu_{1,1}&\Cdots &\d\mu_{1,p}\\
	\Vdots & & \Vdots\\
	\d	\mu_{q,1}&\Cdots &\d\mu_{q,p}
\end{bNiceMatrix},
\]
with each \(\mu_{b,a}\) being a measure supported on the interval \(\Delta_{a,b} \subseteq \mathbb{R}\). The support of the matrix of measures is said to be $\Delta\coloneq \cup_{a,b}\Delta_{a,b}$. In this paper we use the notation $\N\coloneq \{1,2,\dots\}$ and $N_0\coloneq \{0,1,2,\dots\}$.

For \(r \in \mathbb{N}\), we define the matrix of monomials:
\[
X_{[r]}(x) = \begin{bNiceMatrix}
	I_r \\
	xI_r \\
	x^2I_r \\
	\Vdots
\end{bNiceMatrix},
\]
and the moment matrix as:
\[
\mathscr{M}\coloneq \int_{\Delta} X_{[q]}(x) \d\mu(x) X_{[p]}^\top(x).
\]

If all leading principal submatrices \(\mathscr{M}^{[k]}\) are invertible, a $LU$ factorization exists:
\[
\mathscr{M} = \mathscr{L}^{-1} \mathscr{U}^{-1},
\]
where \(\mathscr{L}\) and \(\mathscr{U}\) are nonsingular lower and upper triangular matrices. This factorization is  unique up  to
\(
\mathscr{L} \to \mathscr{d}^{-1}\mathscr{L}, \quad \mathscr{U} \to \mathscr{U}\mathscr{d},
\)
for \(\mathscr{d}\)  any nonsingular diagonal matrix.

Two normalizations are of importance:
\begin{itemize}
	\item  Left normalization sets \(\mathscr{L}\) as unitriangular, resulting in triangular matrices \(S\) and \(\mathscr{U}_L\).
\item Right normalization sets \(\mathscr{U}\) as unitriangular, yielding matrices \(\mathscr{L}_R\) and \(\bar S^\top\).
\end{itemize}

The $LU$ factorization can be rewritten as:
\[
\mathscr{M} = S^{-1} H\bar S^{-\top},
\]
where \(S\), \(\bar S\), and \(H\) are lower unitriangular and  invertible diagonal matrices, respectively. 

Next, we define matrix polynomials associated with this factorization:
\[
\begin{aligned}
	B(x) &= \mathscr{L} X_{[q]}(x), & A(x) &= X_{[p]}^\top(x) \mathscr{U},
\end{aligned}
\]
where \(B(x)\) is monic, i.e.,
\[
\begin{aligned}
	B(x) &= S X_{[q]}(x), & A(x) &= X_{[p]}^\top(x) \bar S^\top H^{-1}.
\end{aligned}
\]
The polynomial entries of \(B\) and \(A\) are:
\[
\begin{aligned}
	B &= \begin{bNiceMatrix}
	B^{(1)}_0 & \Cdots & B^{(q)}_0 \\[2pt] B^{(1)}_1 & \Cdots & B^{(q)}_1 \\[2pt] B^{(1)}_2 & \Cdots & B^{(q)}_2 \\ \Vdots & & \Vdots
\end{bNiceMatrix}, &
A &= \left[\begin{NiceMatrix}
	A^{(1)}_0 & A^{(1)}_1 & A^{(1)}_2 & \Cdots \\ \Vdots & \Vdots & \Vdots & \\ A^{(p)}_0 & A^{(p)}_1 & A^{(p)}_2 & \Cdots
\end{NiceMatrix}\right].
\end{aligned}
\]
We will make use of the following notation,  
\[
\begin{aligned}
    B_n(x) & \coloneq \begin{bNiceMatrix}
        B^{(1)}_n(x) & \Cdots & B^{(q)}_n(x)
    \end{bNiceMatrix}, & A_n(x) & \coloneq \begin{bNiceMatrix}
        A_n^{(1)}(x) \\ \Vdots \\ A_n^{(p)}(x)
    \end{bNiceMatrix}.
\end{aligned}
\]
The polynomials $B(x)$ and $A(x)$ satisfy the biorthogonality condition:
\[
\begin{aligned}
	\int_{\Delta} B(x) \, \d\mu(x) \, A(x) &= I, &
\int_{\Delta} \sum_{b=1}^q \sum_{a=1}^p B^{(b)}_n(x) \d\mu_{b,a}(x) A^{(a)}_m(x) &= \delta_{n,m}.
\end{aligned}
\]

The $LU$ factorization yields:
\[
\begin{aligned}
	\int_\Delta B(x) \, \d\mu(x) X_{[p]}^\top(x) &= \mathscr{U}^{-1}, & \int_\Delta X_{[q]}(x) \, \d\mu(x) A(x)& = \mathscr{L}^{-1},
\end{aligned}
\]
which gives rise to diagonal orthogonality relations:
\[
\begin{aligned}
	\int_\Delta x^l \sum_{a=1}^p \d\mu_{b,a}(x) A_n^{(a)}(x) &= 0, & b &\in\{1,\dots,q\},& l &\in\left\{0,\dots, \left\lceil\frac{n-b+2}{q}\right\rceil-1\right\} ,\\
\int_\Delta \sum_{b=1}^q B_n^{(b)}(x) \d\mu_{b,a}(x) x^l &= 0,  & a &\in\{1,\dots,p,\}, &l &\in\left\{0,\dots,\left\lceil\frac{n-a+2}{p}\right\rceil-1
\right\} .
\end{aligned}
\]

Consequently, the existence of the $LU$ factorization is equivalent to the existence of the orthogonality.

For \(r \in \mathbb{N}_0\), the shift matrix is:
\[
\Lambda_{[r]} \coloneq 
\left[\begin{NiceMatrix}
	0_r & I_r & 0_r & \Cdots \\ 0_r & 0_r & I_r & \Ddots \\ 0_r & 0_r & 0_r & \Ddots \\ \Vdots & \Ddots[shorten-end=5pt] & \Ddots[shorten-end=10pt] & \Ddots[shorten-end=15pt]
\end{NiceMatrix}\right],
\]
with \(\Lambda_{[1]} = \Lambda\) and \(\Lambda_{[r]} = \Lambda^r\). These matrices satisfy:
\[
\Lambda_{[r]}X_{[r]}(x) = x X_{[r]}(x).
\]
The moment matrix \(\mathscr{M}\) obeys the symmetry:
\[
\Lambda_{[q]}\mathscr{M} = \mathscr{M}\Lambda_{[p]}^\top,
\]
leading to:
\[
T = \mathscr{L} \Lambda_{[q]} \mathscr{L}^{-1} = \mathscr{U}^{-1} \Lambda_{[p]}^\top \mathscr{U},
\]
implying that \(T\) is a \((p,q)\)-banded matrix with \(p\) subdiagonals and \(q\) superdiagonals. The recurrence relations for mixed multiple orthogonal polynomials are:
\begin{equation}\label{eq:recursion}
\begin{aligned}
		T B(x) &= x B(x), & A(x) T &= x A(x),
\end{aligned}
\end{equation}
where \(T\) is the recurrence matrix. The leading principal submatrices take the form
\[
		T^{[n]}=
\begin{bNiceMatrix}[columns-width = 1.5cm]
	T_{0,0} &\Cdots &&T_{0,q}& 0 & \Cdots &&0\\
	\Vdots& &&&\Ddots &\Ddots& &\Vdots\\
	T_{p,0}&&&&&&&\\
	0&\Ddots&&&&&&0\\
	\Vdots&\Ddots&&&&&&T_{n-q,m}\\
	&&&&&&&\Vdots\\
	&&&&&&&\\[6pt]
	0&\Cdots&&&0&T_{n,n-p}&\Cdots&T_{n,n}
\end{bNiceMatrix}.
\]
So that  we have the inductive limit $T=\lim_{n\to\infty}  T^{[n]}$.
\subsection{Christoffel--Darboux Kernels}

We now turn our attention to key elements essential for the constructions in this paper: the kernel polynomials.
\begin{Definition}
	Let us introduce the Christoffel--Darboux (CD) kernel  $p\times q$ matrix polynomial, $K^{[n]}(x,y)$:
	\begin{align} \label{CDkernel}
		K^{[n]}(x,y) & \coloneq A^{[n]}(x)B^{[n]}(y) = \begin{bNiceMatrix}
			A_0^{(1)}(x) & \Cdots & A_{n}^{(1)}(x) \\
			\Vdots & & \Vdots \\
			A_0^{(p)}(x) & \Cdots & A_{n}^{(p)}(x)
		\end{bNiceMatrix} \begin{bNiceMatrix}
			B_0^{(1)}(y) & \Cdots & B_0^{(q)}(y) \\
			\Vdots & & \Vdots \\
			B_{n}^{(1)}(y) & \Cdots & B_{n}^{(q)}(y) 
		\end{bNiceMatrix},\\
		K^{[n]}_{a,b}(x,y) & = \sum_{i=0}^{n}A^{(a)}_i(x)B_i^{(b)}(y).
	\end{align}
\end{Definition}

This kernel polynomial satisfies several interesting properties, similar to the  properties of the kernel polynomial in the scalar case.
\begin{Proposition} \label{ProyectorK}
	For any given matrix polynomial of degree $N$, i.e., 
	\[P(x) = \sum_{i=0}^N P_i x^i, \quad P_i \in \mathbb{C}^{p\times p},\]
	the following projection property holds: 
	\begin{equation} \label{Eq ProyectorK}
		\begin{aligned}
			\int_{\Delta}K^{[n]}(x,t)\d \mu(t) P(t) &= P(x), &  n &\geq Np + p -1.
		\end{aligned}
	\end{equation}
\end{Proposition}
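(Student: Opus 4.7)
The strategy is to reduce \eqref{Eq ProyectorK} to the truncated biorthogonality
\[
\int_\Delta B^{[n]}(t)\,\d\mu(t)\,A^{[n]}(t) = I_{n+1},
\]
which follows from the full biorthogonality $\int_\Delta B(t)\,\d\mu(t)\,A(t) = I$ by reading off the leading $(n+1)\times(n+1)$ block. The bridge between the two identities is an expansion of $P(t)$ along the columns of $A^{[n]}(t)$.

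First, I would use that $A(t) = X_{[p]}^\top(t)\,\mathscr{U}$ with $\mathscr{U}$ upper triangular and invertible. Consequently, the columns $A_0(t),\dots,A_n(t)$ of $A^{[n]}(t)$ are related by an invertible upper-triangular change of basis to the first $n+1$ entries of the column-monomial basis (the $k$-th element being $t^i e_a$ with $k=ip+a-1$, $1\leq a\leq p$, $i\geq 0$). The space of $\C^p$-valued polynomial vectors of degree at most $N$ has dimension $p(N+1)$ and is spanned by the first $p(N+1)$ monomial columns, so whenever $n+1 \geq p(N+1)$, i.e.\ $n \geq Np + p - 1$, the columns of $A^{[n]}(t)$ span this space. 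Since each column of the matrix polynomial $P(t)$ belongs to it, there exists an $(n+1)\times p$ coefficient matrix $\mathcal{Q}$ such that $P(t) = A^{[n]}(t)\,\mathcal{Q}$.

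Substituting into the left-hand side of \eqref{Eq ProyectorK} and invoking the truncated biorthogonality yields
\[
\int_\Delta K^{[n]}(x,t)\,\d\mu(t)\,P(t) = A^{[n]}(x)\left(\int_\Delta B^{[n]}(t)\,\d\mu(t)\,A^{[n]}(t)\right)\mathcal{Q} = A^{[n]}(x)\,\mathcal{Q} = P(x),
\]
which is the claim. The only point requiring care is the verification that the threshold $n \geq Np+p-1$ is exactly what is needed for the expansion $P(t) = A^{[n]}(t)\mathcal{Q}$ to exist; this is a direct consequence of the upper-triangular structure of $\mathscr{U}$, which guarantees that the first $p(N+1)$ columns of $A$ and the first $p(N+1)$ monomial columns span the same subspace. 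No deeper obstacle appears: the argument is essentially a change of basis combined with the defining biorthogonality.
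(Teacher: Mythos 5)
Your proof is correct, but it takes a different route from the one in the paper. You expand $P(t)=A^{[n]}(t)\mathcal{Q}$ with a constant coefficient matrix $\mathcal{Q}$ — justified by the upper-triangular, invertible structure of $\mathscr{U}$ (indeed $\mathscr{U}=\bar S^{\top}H^{-1}$), so that the columns $A_0,\dots,A_n$ span the same space as the first $n+1$ monomial columns — and then conclude by the truncated biorthogonality $\int_\Delta B^{[n]}(t)\,\d\mu(t)\,A^{[n]}(t)=I_{n+1}$, which is legitimately just the leading block of the full biorthogonality since each entry involves only one $B_j$ and one $A_k$. The paper instead works on the $B$ side: it uses the diagonal orthogonality relations (with the ceiling-function degree bounds) to show that $\int_\Delta B_m(t)\,\d\mu(t)P(t)=0$ for $m\geq Np+p-1$, so the truncated kernel integral stabilizes and equals the semi-infinite expression $A(x)\int_\Delta B(t)\,\d\mu(t)P(t)$, which is then evaluated explicitly through the Gauss--Borel factorization of the moment matrix, $\bar S^{\top}H^{-1}S\,\mathscr{M}=I$. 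Your argument stays entirely with finite truncations and makes the threshold $n\geq Np+p-1$ appear as a transparent dimension count ($n+1\geq p(N+1)$), whereas the paper's argument explains the same threshold through the orthogonality degree bounds and has the advantage of exhibiting the identity as a stabilized limit of the kernels, a viewpoint it reuses later (e.g.\ in Corollary \ref{ProyectorK-2}, where the threshold is lowered by exploiting the special leading coefficient). Both proofs are sound; yours is a clean dual-basis/reproducing-kernel argument, the paper's is a factorization computation.
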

\begin{proof}
	Using the orthogonality relations, for $n \geq Np + p-1$, it is straightforward to verify that: 
	\begin{equation*}
		\int_\Delta \begin{bNiceMatrix}
			B_n^{(1)}(t) & \Cdots & B_n^{(q)}(t)
		\end{bNiceMatrix} \d \mu(t) P(t) = 0.
	\end{equation*}
	Consequently, the result is independent of $n$: 
	\begin{equation*}
		A^{[n]}(x)\int_{\Delta}B^{[n]}(t) \d \mu(t) P(t) = A^{[Np+p]}(x)\int_{\Delta}B^{[Np+p]}(t) \d \mu(t) P(t) = A(x)\int_{\Delta}B(t) \d \mu(t) P(t).
	\end{equation*}
	Now, we prove the projection property for $n \rightarrow \infty$, 
	\begin{align*}
		A(x)\int_{\Delta}B(t) \d \mu(t) P(t) = X_{[p]}^\top(x) \Bar{S}^\top H^{-1}S \int_\Delta X_{[q]}(t) \d \mu(t) X_{[p]}^\top(t) \begin{bNiceMatrix}
			P_0 \\ \Vdots \\ P_N \\ 0_p \\ \Vdots
		\end{bNiceMatrix}= X_{[p]}^\top(x)\begin{bNiceMatrix}
			P_0 \\ \Vdots \\ P_N \\ 0_p \\ \Vdots
		\end{bNiceMatrix} = P(x).
	\end{align*}
	To achieve the stated result, we have used the definition of the moment matrix and its Gauss--Borel factorization.
\end{proof}

Christoffel–Darboux formulas of the type described here were discussed in \cite{Evi_Arno,AM,afm} for mixed multiple orthogonality. For another type of Christoffel–Darboux formulas for mixed multiple orthogonality, see \cite{aim}.

For the banded recurrence matrix \(T\) corresponding to a given \(n\), we consider two additional matrices:
	\begin{align*}
	\mathscr T^{[n,>n]}&\coloneq\begin{bNiceMatrix}
		T_{n+1-q,n+1}& 0 &\Cdots &0\\
		\Vdots &\Ddots &\Ddots&\Vdots\\
		&&&0\\
		T_{n,n+1}&\Cdots&&T_{n,n+q}
	\end{bNiceMatrix}, &
	\mathscr T^{[>n,n]}&\coloneq\begin{bNiceMatrix}
		T_{n+1,n+1-p}&  \Cdots& &T_{n+1,n}\\
		0&\Ddots[shorten-start=-1pt,shorten-end=-25pt]&&\\\\[-16pt]
		\Vdots & \Ddots[shorten-start=-25pt]&&\Vdots\\	0&\Cdots&0&T_{n+p,n}
	\end{bNiceMatrix}, 
\end{align*}
where $\mathscr	T^{[n,>]}$  and $\mathscr	T^{[>,n]}$ are   $q\times q$ and $p\times p$ matrices, and
\[
\begin{aligned}
	\mathscr{A}^{[n]} &\coloneq \begin{bNiceMatrix}
		A^{(1)}_{n+1-q} & \Cdots & A^{(1)}_{n} \\
		\Vdots & & \Vdots \\
		A^{(p)}_{n+1-q} & \Cdots & A^{(p)}_{n}
	\end{bNiceMatrix}, &
	\mathscr{A}^{[>n]} &\coloneq \begin{bNiceMatrix}
		A^{(1)}_{n+1} & \Cdots & A^{(1)}_{n+p} \\
		\Vdots & & \Vdots \\
		A^{(p)}_{n+1} & \Cdots & A^{(p)}_{n+p}
	\end{bNiceMatrix}, \\
	\mathscr{B}^{[n]} &\coloneq \begin{bNiceMatrix}
		B^{(1)}_{n+1-p} & \Cdots & B^{(q)}_{n+1-p} \\
		\Vdots & & \Vdots \\
		B^{(1)}_{n} & \Cdots & B^{(q)}_{n}
	\end{bNiceMatrix}, &
	\mathscr{B}^{[>n]} &\coloneq \begin{bNiceMatrix}
		B^{(1)}_{n+1} & \Cdots & B^{(q)}_{n+1} \\
		\Vdots & & \Vdots \\
		B^{(1)}_{n+q} & \Cdots & B^{(q)}_{n+q}
	\end{bNiceMatrix},
\end{aligned}
\]
where \(\mathscr{A}^{[n]}\), \(\mathscr{A}^{[>n]}\), \(\mathscr{B}^{[n]}\), and \(\mathscr{B}^{[>n]}\) are \(p \times q\), \(p \times p\), \(p\times q\), and \(q \times q\) matrix polynomials, respectively.

We can express the matrix \(T\) as:
\[
T = \left[\begin{NiceArray}{c|c}
	T^{[n]} & T^{[n,>n]} \\ \hline
	T^{[>n,n]} &
\end{NiceArray}\right],
\]
where \(T^{[n,>n]}\) and \(T^{[>n,n]}\) are \((n+1) \times \infty\) and \(\infty \times (n+1)\) matrices, respectively. These blocks are given by:
\[
\begin{aligned}
	T^{[>n,n]} &= \left[\begin{NiceArray}{c|c}
		0_{p \times (n+1-p)} & \mathscr{T}^{[>n,n]} \\
		\hline
		0_{\infty \times (n+1-p)} & 0_{\infty \times \infty}
	\end{NiceArray}\right], &
	T^{[n,>n]} &= \left[\begin{NiceArray}{c|c}
		0_{(n+1-q) \times q} & 0_{\infty \times \infty} \\
		\hline
		\mathscr{T}^{[n,>n]} & 0_{(n+1-q) \times \infty}
	\end{NiceArray}\right].
\end{aligned}
\]

Similarly, we introduce the notation:
\[
\begin{aligned}
	A &= \begin{bNiceArray}{c|c}
		A^{[n]} & A^{[>n]}
	\end{bNiceArray}, &
	B &= \left[\begin{NiceArray}{c}
		B^{[n]} \\ \hline B^{[>n]}
	\end{NiceArray}\right].
\end{aligned}
\]

\begin{Theorem}\label{KcomoT}
	The following matrix Christoffel–Darboux formula holds:
	\[
	(x-y)K^{[n]}(x,y) = \mathscr{A}^{[>n]}(x)\mathscr{T}^{[>n,n]}\mathscr{B}^{[n]}(y) - \mathscr{A}^{[n]}(x)\mathscr{T}^{[n,>n]}\mathscr{B}^{[>n]}(y).
	\]
\end{Theorem}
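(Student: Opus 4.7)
The plan is to derive the identity directly from the recurrence relations \eqref{eq:recursion} together with the block decomposition of $T$, $A$, and $B$ introduced just before the statement. Writing $(x-y)K^{[n]}(x,y)=(xA^{[n]}(x))B^{[n]}(y)-A^{[n]}(x)(yB^{[n]}(y))$, I would express each parenthesized factor as a truncation of the full eigenvalue-type relation $A(x)T=xA(x)$ and $TB(y)=yB(y)$.

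Concretely, restricting $A(x)T=xA(x)$ to its first $n+1$ columns yields
\[
xA^{[n]}(x)=A^{[n]}(x)T^{[n]}+A^{[>n]}(x)T^{[>n,n]},
\]
and restricting $TB(y)=yB(y)$ to its first $n+1$ rows yields
\[
yB^{[n]}(y)=T^{[n]}B^{[n]}(y)+T^{[n,>n]}B^{[>n]}(y).
\]
Substituting these into $(x-y)K^{[n]}(x,y)$, the common term $A^{[n]}(x)T^{[n]}B^{[n]}(y)$ cancels, leaving
\[
(x-y)K^{[n]}(x,y)=A^{[>n]}(x)T^{[>n,n]}B^{[n]}(y)-A^{[n]}(x)T^{[n,>n]}B^{[>n]}(y).
\]

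The remaining step is to replace these (a priori infinite) row-by-column products by the finite block products stated in the theorem. Here I would invoke the sparsity pattern of $T^{[>n,n]}$ and $T^{[n,>n]}$ described right before the statement: because $T$ is $(p,q)$-banded, $T^{[>n,n]}$ has its only nonzero entries inside the upper-right $p\times p$ corner block $\mathscr T^{[>n,n]}$, so only the columns of $A^{[>n]}(x)$ indexed $n{+}1,\dots,n{+}p$, i.e.\ $\mathscr A^{[>n]}(x)$, and only the rows of $B^{[n]}(y)$ indexed $n{+}1{-}p,\dots,n$, i.e.\ $\mathscr B^{[n]}(y)$, survive. A symmetric remark applies to $T^{[n,>n]}$, whose only nonzero block is the lower-left $q\times q$ corner $\mathscr T^{[n,>n]}$, which contracts the relevant indices of $A^{[n]}(x)$ and $B^{[>n]}(y)$ down to $\mathscr A^{[n]}(x)$ and $\mathscr B^{[>n]}(y)$.

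I do not expect any serious obstacle beyond careful index bookkeeping; the main point to be verified is simply that the block structure of $T^{[>n,n]}$ and $T^{[n,>n]}$ (forced by the bandwidths $p$ and $q$) is the precise one that matches the sizes of $\mathscr A^{[n]}$, $\mathscr A^{[>n]}$, $\mathscr B^{[n]}$, $\mathscr B^{[>n]}$ declared above. Once this pattern-matching is checked, the result follows without further computation.
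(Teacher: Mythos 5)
Your proposal is correct and follows essentially the same route as the paper's proof: truncating the recurrences $A(x)T=xA(x)$ and $TB(y)=yB(y)$ to the first $n+1$ columns/rows, cancelling the common term $A^{[n]}(x)T^{[n]}B^{[n]}(y)$, and then using the corner-block sparsity of $T^{[>n,n]}$ and $T^{[n,>n]}$ to pass to the finite blocks $\mathscr A^{[>n]},\mathscr T^{[>n,n]},\mathscr B^{[n]}$ and $\mathscr A^{[n]},\mathscr T^{[n,>n]},\mathscr B^{[>n]}$. The only difference is that you spell out the last pattern-matching step, which the paper states without comment.
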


\begin{proof}
	From \eqref{eq:recursion}, we have \((A(x)T)^{[n]} = xA^{[n]}(x)\) and \((TB(y))^{[n]} = y B^{[n]}(y)\). Expanding these:
	\[
	\begin{aligned}
		xA^{[n]}(x) &= (A(x)T)^{[n]} = A^{[n]}(x)T^{[n]} + A^{[>n]}T^{[>n,n]}, \\
		yB^{[n]}(y) &= (TB(y))^{[n]} = T^{[n]}B^{[n]}(y) + T^{[n,>n]}B^{[>n]}(y).
	\end{aligned}
	\]
	Thus, we obtain:
	\[
	\begin{aligned}
		(x-y)K^{[n]}(x,y) &= A^{[>n]}(x)T^{[>n,n]}B^{[n]}(y) - A^{[n]}(x)T^{[n,>n]}B^{[>n]}(y) \\
		&= \mathscr{A}^{[>n]}(x)\mathscr{T}^{[>n,n]}\mathscr{B}^{[n]}(y) - \mathscr{A}^{[n]}(x)\mathscr{T}^{[n,>n]}\mathscr{B}^{[>n]}(y).
	\end{aligned}
	\]
\end{proof}

\subsection{Cauchy Transforms}
Another crucial set of objects in this construction are the Cauchy transforms of the matrix polynomials under consideration.
\begin{Definition}
	The Cauchy transforms of the orthogonal polynomials $A(x)$ and $B(x)$ are defined as follows:
	\[    \begin{aligned}
		C(z) & \coloneq \int_\Delta \frac{\d \mu(x)}{z-x}A(x), & D(z) & \coloneq \int_\Delta B(x) \frac{\d \mu(x)}{z-x}.
	\end{aligned}\]
	
\end{Definition}

\begin{Remark}
	The entries of the Cauchy transform matrices are holomorphic in  $\C\setminus \Delta$.
\end{Remark}
\begin{Proposition}
	$C(z)$ and $D(z)$ can also be expressed in terms of the Gauss--Borel matrices, $S$ and $\Bar{S}$:
	\[ \begin{aligned}
		C(z) & = \frac{1}{z} X_{[q]}^\top(z^{-1}) S^{-1}, & D(z) & = \frac{1}{z} H \Bar{S}^{-\top} X_{[p]}(z^{-1}),
	\end{aligned}\]
	whenever $\abs{z} > \mathrm{sup}\{ \abs{x}: x \in \Delta \}$.
\end{Proposition}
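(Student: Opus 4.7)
The plan is to expand the Cauchy kernel as a Neumann series and then collapse the resulting moment integrals via the Gauss--Borel factorization $\mathscr M=S^{-1}H\bar S^{-\top}$. For $\abs{z}>\sup_{x\in\Delta}\abs{x}$ the expansion
\[
\frac{1}{z-x}=\sum_{k=0}^{\infty}\frac{x^{k}}{z^{k+1}}
\]
converges uniformly on $\Delta$, and bundled into the monomial blocks it gives the two key identities
\[
\frac{I_q}{z-x}=\frac{1}{z}\,X_{[q]}^{\top}(z^{-1})\,X_{[q]}(x),\qquad \frac{I_p}{z-x}=X_{[p]}^{\top}(x)\cdot\frac{1}{z}\,X_{[p]}(z^{-1}).
\]

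First I would substitute the left-hand identity inside the definition of $C(z)$; uniform convergence on the bounded set $\Delta$ justifies pulling the row vector $\frac{1}{z}X_{[q]}^{\top}(z^{-1})$ outside the integral, giving
\[
C(z)=\frac{1}{z}\,X_{[q]}^{\top}(z^{-1})\int_{\Delta}X_{[q]}(x)\,\d\mu(x)\,A(x).
\]
Since $A(x)=X_{[p]}^{\top}(x)\,\bar S^{\top}H^{-1}$, the remaining integral telescopes as
\[
\int_{\Delta}X_{[q]}(x)\,\d\mu(x)\,A(x)=\mathscr M\,\bar S^{\top}H^{-1}=S^{-1}H\bar S^{-\top}\bar S^{\top}H^{-1}=S^{-1},
\]
which is precisely the $\mathscr L^{-1}$ identity already recorded in the excerpt. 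This yields the claim for $C(z)$. The argument for $D(z)$ is entirely dual: insert the second identity on the right of $\d\mu(x)$ and factor out $\frac{1}{z}X_{[p]}(z^{-1})$, so that
\[
D(z)=\left(\int_{\Delta}B(x)\,\d\mu(x)\,X_{[p]}^{\top}(x)\right)\frac{1}{z}\,X_{[p]}(z^{-1}),
\]
and then $B(x)=S\,X_{[q]}(x)$ gives $\int_{\Delta}B(x)\,\d\mu(x)\,X_{[p]}^{\top}(x)=S\mathscr M=H\bar S^{-\top}$, producing the stated formula.

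There is no real obstacle here; the only technical point is justifying termwise integration and the convergence of the semi-infinite matrix--vector products $\frac{1}{z}X_{[q]}^{\top}(z^{-1})S^{-1}$ and $H\bar S^{-\top}\frac{1}{z}X_{[p]}(z^{-1})$. Both reduce to the uniform geometric estimate $\abs{x/z}\le \sup_{x\in\Delta}\abs{x}/\abs{z}<1$, which controls each entry absolutely and lets the partial sums match the integrals. In essence the proposition is the recognition that the Cauchy transforms are the generating functions of the moment blocks already identified with the triangular Gauss--Borel factors $S^{-1}$ and $H\bar S^{-\top}$.
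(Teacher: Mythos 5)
Your proof is correct and takes essentially the same route as the paper: expand the Cauchy kernel as a geometric series valid for $\abs{z}>\sup\{\abs{x}:x\in\Delta\}$, recognize the series as $\tfrac{1}{z}X_{[q]}^{\top}(z^{-1})X_{[q]}(x)$ (and its $p$-analogue), and collapse the resulting moment integral through the Gauss--Borel factorization $\mathscr M=S^{-1}H\bar S^{-\top}$. The paper only writes out the $C(z)$ computation and declares $D(z)$ "similar", whereas you carry out both halves explicitly; otherwise the arguments coincide.
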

\begin{proof}
	We have
	\begin{align*}
		C(z) & = \int_\Delta \frac{\d \mu(x)}{z-x}A(x) = \frac{1}{z} \int_\Delta \frac{\d \mu(x)}{1-\frac{x}{z}}A(x) = \frac{1}{z}\int_\Delta \sum_{i=0}^\infty \left( \frac{x}{z} \right)^i \d \mu(x) A(x) \\[3pt]
		& = \frac{1}{z} X_{[q]}^\top(z^{-1})\int_\Delta X_{[q]}(x) \d \mu(x) X^\top_{[p]}(x) \Bar{S}^\top H^{-1} = \frac{1}{z} X_{[q]}(z^{-1}) S^{-1}.
	\end{align*}
	The expansion of $(1-\frac{x}{z})^{-1}$ in a power series is valid since $\abs{z} > \mathrm{sup}\{ \abs{x}: x \in \Delta \}$. A similar proof can be given for $D(z)$. 
\end{proof}
Let us now introduce two additional families of CD kernels.
\begin{Definition}
	The mixed Christoffel--Darboux kernels are defined as follows:
	\begin{align*}
		K^{[n]}_{C}(x,y) & \coloneq C^{[n]}(x)B^{[n]}(y) = \int_\Delta \frac{\d \mu(t)}{x-t}K^{[n]}(t,y), \\
		K^{[n]}_{D}(x,y) & \coloneq A^{[n]}(x)D^{[n]}(y) = \int_\Delta K^{[n]}(x,t)\frac{\d \mu(t)}{y-t}.
	\end{align*}
\end{Definition}

We now derive Christoffel–Darboux formulas for the mixed kernels. To this end, we introduce
\[
\begin{aligned}
	\mathscr{C}^{[n]}(x) &\coloneq \int_{\Delta} \frac{\mu(t)}{x-t} \mathscr{A}^{[n]}(t), &
	\mathscr{C}^{[>n]}(x) &\coloneq \int_{\Delta} \frac{\mu(t)}{x-t} \mathscr{A}^{[>n]}(t), \\
	\mathscr{D}^{[n]}(y) &\coloneq \int_{\Delta} \mathscr{B}^{[n]}(t) \frac{\mu(t)}{y-t}, &
	\mathscr{D}^{[>n]}(y) &\coloneq \int_{\Delta} \mathscr{B}^{[>n]}(t) \frac{\mu(t)}{y-t}.
\end{aligned}
\]

\begin{Theorem}\label{KDcomoT}
	The mixed Christoffel–Darboux kernels satisfy the following Christoffel–Darboux-type formulas:
	\[
	\begin{aligned}
		(x-y) K^{[n]}_{C}(x,y) 
		&= \big(\mathscr{C}^{[>n]}(x) - \mathscr{C}^{[>n]}(y)\big) \mathscr{T}^{[>n,n]} \mathscr{B}^{[n]}(y) 
		- \big(\mathscr{C}^{[n]}(x) - \mathscr{C}^{[n]}(y)\big) \mathscr{T}^{[n,>n]} \mathscr{B}^{[>n]}(y), \\
		(x-y) K^{[n]}_{D}(x,y) 
		&= \mathscr{A}^{[n]}(x)  \mathscr{T}^{[n,>n]} \big(\mathscr{D}^{[>n]}(x) - \mathscr{D}^{[>n]}(y)\big) - \mathscr{A}^{[>n]}(x)  \mathscr{T}^{[>n,n]} \big(\mathscr{D}^{[n]}(x) - \mathscr{D}^{[n]}(y)\big).
	\end{aligned}
	\]
\end{Theorem}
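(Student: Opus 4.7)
The plan is to reduce both identities to the polynomial Christoffel--Darboux formula of Theorem \ref{KcomoT} by inserting a Cauchy kernel and invoking the elementary partial-fraction identity
\[
\frac{1}{(x-t)(y-t)}=\frac{1}{x-y}\left(\frac{1}{y-t}-\frac{1}{x-t}\right),
\]
valid for $x\neq y$.

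For the first identity, I would start from the definition $K^{[n]}_C(x,y)=\int_\Delta\frac{\d\mu(t)}{x-t}K^{[n]}(t,y)$ and, using Theorem \ref{KcomoT}, rewrite
\[
K^{[n]}(t,y)=\frac{1}{t-y}\bigl[\mathscr{A}^{[>n]}(t)\mathscr{T}^{[>n,n]}\mathscr{B}^{[n]}(y)-\mathscr{A}^{[n]}(t)\mathscr{T}^{[n,>n]}\mathscr{B}^{[>n]}(y)\bigr].
\]
Substituting this into the integral, the prefactor becomes $\frac{1}{(x-t)(t-y)}$. Multiplying through by $(x-y)$ and applying the partial-fraction identity splits the integrand into pieces with kernels $\frac{1}{x-t}$ and $\frac{1}{y-t}$. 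Since $\mathscr{B}^{[n]}(y)$ and $\mathscr{B}^{[>n]}(y)$ depend only on $y$, they factor out of the $t$-integration; the remaining integrals of $\mathscr{A}^{[>n]}(t)$ and $\mathscr{A}^{[n]}(t)$ against $\frac{1}{x-t}-\frac{1}{y-t}$ reproduce exactly the differences $\mathscr{C}^{[>n]}(x)-\mathscr{C}^{[>n]}(y)$ and $\mathscr{C}^{[n]}(x)-\mathscr{C}^{[n]}(y)$ appearing on the right-hand side.

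For the second identity, I would proceed by the mirror argument: starting from $K^{[n]}_D(x,y)=\int_\Delta K^{[n]}(x,t)\frac{\d\mu(t)}{y-t}$ and applying Theorem \ref{KcomoT} to rewrite $K^{[n]}(x,t)$ with a $\frac{1}{x-t}$ prefactor yields an integrand containing exactly $\frac{1}{(x-t)(y-t)}$. Multiplying by $(x-y)$ and using the same partial-fraction identity, now with $\mathscr{A}^{[>n]}(x)$ and $\mathscr{A}^{[n]}(x)$ factoring out of the $t$-integration, produces the desired combinations $\mathscr{A}^{[n]}(x)\mathscr{T}^{[n,>n]}\bigl(\mathscr{D}^{[>n]}(x)-\mathscr{D}^{[>n]}(y)\bigr)$ and $\mathscr{A}^{[>n]}(x)\mathscr{T}^{[>n,n]}\bigl(\mathscr{D}^{[n]}(x)-\mathscr{D}^{[n]}(y)\bigr)$ with the correct signs.

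The only delicate point I would need to address carefully is the division by $t-y$ (respectively $x-t$) when inverting the Christoffel--Darboux formula inside the integrand. This is legitimate because, by Theorem \ref{KcomoT} itself, the bracketed combination vanishes at the coincident point, so the quotient is a polynomial in the integration variable and introduces no spurious singularity on $\Delta$. Since $x,y\in\C\setminus\Delta$, both Cauchy kernels are uniformly bounded on the support of $\mu$, so the partial-fraction split and the interchange of integration with linear combinations require no further justification, and the stated identities follow at once.
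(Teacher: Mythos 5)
Your proposal is correct and follows essentially the same route as the paper: substitute the Christoffel--Darboux formula of Theorem \ref{KcomoT} into the integral definitions of $K^{[n]}_C$ and $K^{[n]}_D$, apply the partial-fraction identity $\frac{1}{(x-t)(t-y)}=\frac{1}{x-y}\bigl(\frac{1}{x-t}-\frac{1}{y-t}\bigr)$, and recognize the resulting integrals as the differences of the Cauchy transforms $\mathscr{C}$ and $\mathscr{D}$. Your remark on the harmlessness of the division by $t-y$ (the quotient being the polynomial kernel itself, with $x,y\notin\Delta$) is a fine, if implicit in the paper, observation.
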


\begin{proof}
	We prove only the first formula, as the second follows similarly.
	
	We start with
	\[
	K^{[n]}_{C}(x,y) = \int_\Delta \frac{\d \mu(t)}{x-t} K^{[n]}(t,y),
	\]
	and use the Christoffel–Darboux formula:
	\[
	K^{[n]}_{C}(x,y) = \int_\Delta \frac{\d \mu(t)}{(x-t)(t-y)} 
	\left(\mathscr{A}^{[>n]}(t) \mathscr{T}^{[>n,n]} \mathscr{B}^{[n]}(y) 
	- \mathscr{A}^{[n]}(t) \mathscr{T}^{[n,>n]} \mathscr{B}^{[>n]}(y)\right).
	\]
	
	Since
	\[
	\frac{1}{(x-t)(t-y)} = \frac{1}{x-y} \left( \frac{1}{x-t} - \frac{1}{y-t} \right),
	\]
	we find
	\[
	\begin{aligned}
		(x-y) K^{[n]}_{C}(x,y) &= \int_\Delta \d \mu(t) \left( \frac{1}{x-t} - \frac{1}{y-t} \right)
		\left(\mathscr{A}^{[>n]}(t) \mathscr{T}^{[>n,n]} \mathscr{B}^{[n]}(y) 
		- \mathscr{A}^{[n]}(t) \mathscr{T}^{[n,>n]} \mathscr{B}^{[>n]}(y)\right) \\
		&= \big(\mathscr{C}^{[>n]}(x) - \mathscr{C}^{[>n]}(y)\big) \mathscr{T}^{[>n,n]} \mathscr{B}^{[n]}(y) 
		- \big(\mathscr{C}^{[n]}(x) - \mathscr{C}^{[n]}(y)\big) \mathscr{T}^{[n,>n]} \mathscr{B}^{[>n]}(y).
	\end{aligned}
	\]
\end{proof}

\subsection{Canonical Set of Jordan Chains and Divisibility for Matrix Polynomials} \label{S:CSoJC}

Building on \cite{MatrixPoly}, we explore essential results concerning matrix polynomials, crucial for our further study. We focus on regular matrix polynomials:
\[
R(x) = \sum_{l=0}^N R_l x^l, \quad R_l \in \mathbb{C}^{p \times p},
\]
where \(\det R(x)\) is not identically zero. The degree of \(\det R(x)\) is:
\[
\deg \det R(x) = Np - r, \quad r \in \{0,\dots,Np-1\}.
\]
The eigenvalues of \(R(x)\) are the zeros of \(\det R(x)\).

\begin{Proposition}[Smith Form]\label{SmithForm}
	Any matrix polynomial can be represented in its Smith form as:
	\[
	R(x) = E(x)D(x)F(x),
	\]
	where \( E(x) \) and \( F(x) \) are matrices with constant determinants, and \( D(x) \) is a diagonal matrix polynomial. Explicitly, \( D(x) \) takes the form:
	\[
	D(x) = \diag \left(
	\prod_{i=1}^{M}(x-x_i)^{\kappa_{i,1}}, \prod_{i=1}^{M}(x-x_i)^{\kappa_{i,2}}, \dots , \prod_{i=1}^{M}(x-x_i)^{\kappa_{i,p}}\right),
	\]
	where \( x_i \) are the roots of \( \det R(x) \), and \( \kappa_{i,j} \) are the partial multiplicities. For a root \( x_i \) with multiplicity \( K_i \), the following relations hold:
	\[
	\begin{aligned}
		K_i &= \sum_{j=1}^{p} \kappa_{i,j}, & \sum_{i=1}^{M}\sum_{j=1}^p \kappa_{i,j} &= Np - r,
	\end{aligned}
	\]
	with \( M \) being the number of distinct roots and some partial multiplicities potentially zero.
\end{Proposition}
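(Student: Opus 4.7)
The plan is to establish the Smith form via the classical row-and-column reduction algorithm over the Euclidean domain $\mathbb{C}[x]$. The admissible elementary operations are: (i) interchange of two rows or two columns, (ii) multiplication of a row or column by a nonzero complex scalar, and (iii) addition of a polynomial multiple of one row (resp.\ column) to another. Each such operation is realized as left (resp.\ right) multiplication by an elementary matrix over $\mathbb{C}[x]$ whose determinant is a nonzero constant, and any finite composition yields matrix polynomials $E(x)$ and $F(x)$ with constant nonzero determinants, exactly as required.

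The reduction runs as follows. Among the nonzero entries of $R(x)$, select one of minimum degree and bring it to position $(1,1)$ by row and column interchanges. Using Euclidean division in $\mathbb{C}[x]$, for each other entry $r_{1,k}(x)$ of the first row write $r_{1,k}(x) = q(x)\,r_{1,1}(x) + s(x)$ with $\deg s < \deg r_{1,1}$, and subtract $q(x)$ times column $1$ from column $k$ to replace $r_{1,k}$ by $s$. If any such remainder is nonzero, swap it into position $(1,1)$ — its degree is strictly smaller — and restart. Since degrees are nonnegative integers, the process terminates with the first row cleared apart from the pivot; the symmetric argument clears the first column. If after this the pivot does not divide some entry $r_{k,l}$ of the remaining block, add row $k$ to row $1$ and repeat the row-clearing step: the remainder of $r_{k,l}$ modulo the pivot then appears in the first row and forces a new outer round with a strictly smaller pivot. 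Termination is guaranteed by the well-ordering of $\mathbb{N}_0$.

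Having cleared the first row and column with a pivot dividing every remaining entry, I would recurse on the $(p-1)\times(p-1)$ lower-right block. After $p-1$ recursions one obtains unimodular $E(x), F(x)$ and a diagonal matrix polynomial $\tilde D(x) = \diag\!\bigl(d_1(x),\dots,d_p(x)\bigr)$ with $R(x) = E(x)\,\tilde D(x)\,F(x)$. Since $\mathbb{C}[x]$ is algebraically closed, each diagonal entry factors as $d_j(x) = c_j \prod_{i=1}^{M}(x - x_i)^{\kappa_{i,j}}$ over the distinct roots $x_1,\dots,x_M$ of $\det R(x)$, with nonnegative integer exponents. Absorbing the nonzero scalars $c_j$ into $E(x)$ yields the diagonal form stated in the proposition. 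Taking determinants in the factorization, $\det R(x)$ equals a nonzero constant times $\prod_{i,j}(x-x_i)^{\kappa_{i,j}}$, from which the degree identity $\sum_{i,j}\kappa_{i,j} = \deg\det R = Np - r$ and the per-root identity $\sum_{j}\kappa_{i,j} = K_i$ follow immediately.

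The main obstacle is the bookkeeping for termination of the reduction: one must verify that every non-terminating inner step either clears an entry or strictly decreases the degree of the current pivot, so that the outer loop cannot cycle. Uniqueness of the exponents $\kappa_{i,j}$ up to permutation, although not demanded by the statement as written, is a standard consequence of tracking greatest common divisors of $k\times k$ minors under unimodular transformations, and could be invoked if needed elsewhere in the paper.
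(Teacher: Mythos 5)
Your argument is correct: it is the standard constructive Smith-form reduction over the Euclidean domain $\mathbb{C}[x]$, with the elementary operations realized by unimodular (constant nonzero determinant) factors, termination controlled by the strictly decreasing degree of the pivot, and the exponent identities read off by taking determinants. The paper itself does not prove this proposition at all; it is quoted as a classical result from the monograph of Gohberg, Lancaster and Rodman (\cite{MatrixPoly}), so your proposal supplies a self-contained proof where the paper relies on a citation — a reasonable trade-off, since the reduction argument is elementary but somewhat lengthy, whereas the citation keeps the focus on the Geronimus constructions. Two small points to fix or note: the phrase ``$\mathbb{C}[x]$ is algebraically closed'' should read ``$\mathbb{C}$ is algebraically closed'' (it is the coefficient field, not the polynomial ring, whose algebraic closure lets each $d_j$ split into linear factors, necessarily at roots of $\det R$ since $\det R$ is a constant multiple of $\prod_j d_j$); and your reduction, as written, does not produce the divisibility chain $d_1 \mid d_2 \mid \dots \mid d_p$ of the invariant factors — this is not required by the statement as formulated, but if the canonical ordering were ever needed, the extra ``pivot divides the remaining block'' rounds you already describe do yield it, so only the remark would need sharpening, not the construction.
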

To simplify the notation, we consider a single eigenvalue \( x_0 \) with multiplicity \( K \).

\begin{Definition}[Jordan Chains]
	\begin{enumerate}
		\item A Jordan chain for \( R(x) \) at \( x_0 \in \mathbb{C} \) consists of \( L+1 \) vectors satisfying:
		\[
		\sum_{l=0}^i \frac{1}{l!}\boldsymbol{v}_{L-l}R^{(l)}(x_0) = 0, \quad i \in \{0, \dots ,L\}.
		\]
		\item A canonical set of Jordan chains of \( R(x) \) at \( x_0 \) consists of \( K_i \) vectors structured as:
		\[
		\{ \boldsymbol{v}_{1,0}, \boldsymbol{v}_{1,1}, \dots, \boldsymbol{v}_{1,\kappa_1-1}, \dots, \boldsymbol{v}_{s,0}, \dots, \boldsymbol{v}_{s,\kappa_s-1} \},
		\]
		where \( s \leq p \) and \( \sum_{i=1}^s \kappa_i = K \). Each subset \( \{ \boldsymbol{v}_{i,0}, \dots, \boldsymbol{v}_{i,\kappa_i-1} \} \) forms a Jordan chain of length \( \kappa_i \), with the vectors \( \boldsymbol{v}_{i,0} \) being linearly independent.
	\end{enumerate}
\end{Definition}

	\subsection{The Matrix Structure of the Polynomial Perturbation}
Let's  consider  matrix polynomials  
\[	\begin{aligned}
	R(x) & = \sum_{l=0}^N  R_{l}x^l, & R_{l} &  \in \mathbb{C}^{p\times p}, 
\end{aligned}\]
with  leading and sub-leading matrix coefficients of the form:
\begin{align} \tag{C1}\label{LeadingMatrixConditions}
	R_{N}  & = \begin{bNiceArray}{cw{c}{1cm}c||w{c}{2cm}c}
		\Block{3-2}{0_{(p-r)\times r} }	&	& \Block{3-3}{\left[ t_{N} \right]_{(p-r)\times (p-r)}} &&\\\\\\
		\Block{2-2}{	0_{r\times r} }& &\Block{2-3}{0_{r \times (p-r)}} &&\\\\
	\end{bNiceArray}, & R_{N-1}  & = \begin{bmatrix}
		\left[ R^1_{N-1} \right]_{(p-r)\times r} & \left[ R^2_{N-1} \right]_{(p-r)\times (p-r)} \\ \\
		\left[ t_{N-1} \right]_{r\times r} & \left[ R^4_{N-1} \right]_{r \times (p-r)}
	\end{bmatrix},
\end{align}
where $r$  take values in $\{0,\cdots,p-1\}$, and $\left[ t_{N} \right]_{(p-r)\times (p-r)}$ and $\left[ t_{N-1} \right]_{r\times r}$ are upper triangular matrices with nonzero determinant.

From this point onward, we will consider matrix polynomials in which the leading and sub-leading matrices, $\left[ t_{N} \right]_{(p-r)\times (p-r)}$ and $\left[ t_{N-1} \right]_{r\times r}$, are chosen to be the identity matrix:
\begin{align} \tag{C2}\label{CondicionesMatricesLideresFinales}
	R_{N}  & = \begin{bmatrix}
		0_{(p-r)\times r} & I_{(p-r)\times (p-r)} \\ \\
		0_{r\times r} & 0_{r \times (p-r)}
	\end{bmatrix}, & R_{N-1}  & = \begin{bmatrix}
		\left[ R^1_{N-1} \right]_{(p-r)\times r} & \left[ R^2_{N-1} \right]_{(p-r)\times (p-r)} \\ \\
		I_{r\times r} & \left[ R^4_{N-1} \right]_{r \times (p-r)}
	\end{bmatrix}.
\end{align}
It is straightforward to observe that any matrix polynomial of the form in condition \eqref{LeadingMatrixConditions} can be expressed as the product of $R(x)$ and another upper triangular matrix with a nonzero determinant. While we will focus on perturbations where the leading matrices satisfy the condition \eqref{CondicionesMatricesLideresFinales}, multiplying the weight matrix by a matrix with a nonzero determinant will preserve orthogonality, and the newly perturbed polynomials will be linear combinations of the original ones.

\begin{Proposition} \label{Prop2}
The determinant of a matrix polynomial, where the leading and sub-leading matrices satisfy the conditions in \eqref{CondicionesMatricesLideresFinales}, is a polynomial of degree $Np - r$.
\end{Proposition}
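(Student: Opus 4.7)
My plan is to use the Leibniz expansion of $\det R(x)$ and to track the top-degree contributions through the block structure imposed by condition \eqref{CondicionesMatricesLideresFinales}. The first step is a per-row degree audit. Rows $1,\dots,p-r$ receive contributions from $R_N$ only through the identity block in its upper-right corner, so in such a row $i$ the entry at column $r+i$ has leading coefficient $1$ at degree $N$ and every other entry has degree at most $N-1$. Rows $p-r+1,\dots,p$ receive no contribution at all from $R_N$, so all their entries have degree at most $N-1$; moreover, the identity block in the lower-left corner of $R_{N-1}$ forces the entry of row $p-r+k$ at column $k$ to have leading coefficient $1$ at $x^{N-1}$, while the other entries of row $p-r+k$ have degree $\le N-1$ with coefficients determined by $R^1_{N-1}$ or $R^4_{N-1}$.

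The second step is to write $\det R(x) = \sum_{\sigma \in S_p} \operatorname{sgn}(\sigma) \prod_{i=1}^p R_{i,\sigma(i)}(x)$ and apply the degree audit summand by summand. Each product has degree at most $(p-r)N + r(N-1) = Np-r$, so $\deg \det R(x) \le Np-r$.

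The third step is to identify the unique permutation achieving this bound and compute its coefficient. Equality in the degree estimate forces every factor $R_{i,\sigma(i)}(x)$ to be at its maximum degree. For $i \le p-r$ this requires $\sigma(i) = r+i$, which accounts for precisely the columns $r+1,\dots,p$. The rows $p-r+1,\dots,p$ are therefore forced to map into $\{1,\dots,r\}$, and by the first step only the choice $\sigma(p-r+k)=k$ produces a factor of degree exactly $N-1$. Hence the maximizing $\sigma$ is the cyclic shift sending $i$ to $r+i \pmod{p}$; its sign is $(-1)^{r(p-r)}$ and the corresponding product of leading coefficients is $1$. Consequently the coefficient of $x^{Np-r}$ in $\det R(x)$ is $(-1)^{r(p-r)} \neq 0$, yielding $\deg \det R(x) = Np - r$.

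No serious obstacle is expected; the only subtle point is verifying that no alternative permutation contributes to the top degree, and this is settled by the strict forcing argument in the third step, which relies crucially on the identity blocks in $R_N$ and $R_{N-1}$ rather than on the generic entries $R^1_{N-1}, R^2_{N-1}, R^4_{N-1}$.
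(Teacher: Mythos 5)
Your proof is correct and takes essentially the same route as the paper: a Leibniz expansion of $\det R(x)$ in which the block-swap permutation $i\mapsto r+i \pmod{p}$ supplies the unique top-degree term of degree $Np-r$. Your per-row degree audit and forcing argument (together with the explicit leading coefficient $(-1)^{r(p-r)}$, whose sign the paper silently absorbs) simply make precise the paper's assertion that every other permutation contributes either zero or lower degree.
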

\begin{proof}
Expanding the determinant we get
	\begin{align*}
		\det  R(x) & =  \begin{vmatrix}
			\left[ R^1_{N-1} \right]x^{N-1}+O(x^{N-2}) & x^{N}I_{(p-r)} + O(x^{N-1}) \\
			\\
			x^{N-1}I_{r} + O(x^{N-2}) & \left[ R^3_{N-1} \right]x^{N-1}+O(x^{N-2})
		\end{vmatrix} \\
		& = \sum_{\sigma \in \mathcal{S}_p} \text{sgn}(\sigma) R_{1\sigma(1)}R_{2\sigma(2)} \cdots R_{p\sigma(p)} \\&
			= \text{sgn}(\Tilde{\sigma}) R_{1\Tilde{\sigma}(1)}R_{2\Tilde{\sigma}(2)} \cdots R_{(p-r)\Tilde{\sigma}(p-r)} \cdots R_{p\Tilde{\sigma}(p)} 
			+ \sum_{\sigma \neq \Tilde{\sigma}} \text{sgn}(\sigma) R_{1\sigma(1)}R_{2\sigma(2)} \cdots R_{p\sigma(p)},
	\end{align*}
	where the permutation $\Tilde{\sigma}(i)$ is such that $i \rightarrow r+i$ for $i \leq (p-r)$ and $i \rightarrow i-(p-r)$ for $i \geq (p-r)+1$. This term in the determinant expansion leads to a contribution of the form:
	\begin{multline*}
		\sgn(\Tilde{\sigma}) R_{1\Tilde{\sigma}(1)}R_{2\Tilde{\sigma}(2)} \cdots R_{(p-r)\Tilde{\sigma}(p-r)} \cdots R_{p\Tilde{\sigma}(p)} \\\begin{aligned}
			&= \sgn(\Tilde{\sigma}) R_{1,(r+1)}R_{2,(r+2)} \cdots R_{(p-r),p} R_{(p-r+1),1} \cdots R_{pr} =
			\sgn(\Tilde{\sigma}) (x^N)^{(p-r)}(x^{N-1})^{r} = x^{Np-r}.
		\end{aligned}
	\end{multline*} 	
	Any other permutation either results in zero or produces terms of lower degree.
\end{proof}
\begin{Proposition}
$R\left( \Lambda_{[p]}^\top \right)$ is banded lower triangular matrix that from the $Np-r$ subdiagonal is populated with zeros.
\end{Proposition}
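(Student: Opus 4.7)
The plan is to unfold $R(\Lambda_{[p]}^\top) = \sum_{l=0}^{N} R_l (\Lambda_{[p]}^\top)^l$ at both the block and the entry level, and then to exploit the fixed zero pattern in the two top matrix coefficients prescribed by \eqref{CondicionesMatricesLideresFinales}. Since $(\Lambda_{[p]}^\top)^l$ is the semi-infinite matrix whose only nonzero blocks are copies of $I_p$ placed on its $l$-th block subdiagonal, the product $R_l (\Lambda_{[p]}^\top)^l$ places $R_l$ along that same $l$-th block subdiagonal; summing over $l=0,\ldots,N$ shows at once that $R(\Lambda_{[p]}^\top)$ is block lower triangular with $R_l$ on its $l$-th block subdiagonal, which already yields the banded, lower triangular shape.

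Next I would translate from block subdiagonals to entry subdiagonals. The entry $(R_l)_{a,b}$ with $a,b\in\{1,\ldots,p\}$ sits at a position whose row index minus column index equals $k = lp + a - b$, so $R_l$ can populate only entry subdiagonals $k$ with $lp-(p-1)\leq k \leq lp+(p-1)$. In particular, for $l\leq N-2$ the largest such $k$ is $(N-1)p-1 = Np-p-1 < Np-r$, using $r\leq p-1$, so none of the lower-order blocks can even reach the critical subdiagonal; only $R_{N-1}$ and $R_N$ require individual inspection. For $R_N$, condition \eqref{CondicionesMatricesLideresFinales} forces its only nonzero entries to be $(R_N)_{a,r+a}=1$ with $a\in\{1,\ldots,p-r\}$, and each of these lands exactly on subdiagonal $Np + a - (r+a) = Np-r$, so $R_N$ never reaches a larger subdiagonal. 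For $R_{N-1}$, a contribution to a subdiagonal strictly greater than $Np-r$ forces $b<a-(p-r)$, which places $(a,b)$ strictly below the diagonal of the bottom-left $I_r$ block and is therefore zero by \eqref{CondicionesMatricesLideresFinales}.

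Putting the three observations together shows that every entry of $R(\Lambda_{[p]}^\top)$ beyond the $(Np-r)$-th subdiagonal vanishes, which is the proposition. The only delicate point is the careful indexing that converts block subdiagonal index $l$ into entry subdiagonal index $k=lp+a-b$ and then recognizes the implicit diagonal of the $I_r$ block hidden inside $R_{N-1}$; no heavier machinery such as the Smith form, divisibility, or Jordan chain theory is needed, and the conclusion follows by pure index accounting once the block picture is in place.
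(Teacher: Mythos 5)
Your proof is correct and follows essentially the same route as the paper: expand $R(\Lambda_{[p]}^\top)$ as a block lower triangular Toeplitz matrix with $R_l$ on the $l$-th block subdiagonal, and then use the prescribed zero patterns of $R_N$ and $R_{N-1}$ from \eqref{CondicionesMatricesLideresFinales} to see that nothing reaches past the $(Np-r)$-th entry subdiagonal. Your version simply makes the paper's terse count of subdiagonals in the block $\begin{bmatrix} R_{N-1} & R_{N-2} \\ R_N & R_{N-1}\end{bmatrix}$ explicit via the index bookkeeping $k = lp + a - b$, which is a welcome clarification rather than a different method.
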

\begin{proof}
We have
\begin{equation*}
	R\left( \Lambda_{[p]}^\top \right) = \left[\begin{NiceMatrix}
		R_0 & 0_p & 0_p & \Cdots \\
		R_1 & R_0 & 0_p & \Cdots \\
		\Vdots & & \Ddots & \\
		R_N & R_{N-1} & \Cdots & R_0 & \Cdots \\
		0_p & R_N & R_{N-1} & \Cdots & R_0 & \Cdots\\
		\Vdots & \Ddots[shorten-end=-10pt] &  \Ddots[shorten-end=-10pt] &  \Ddots[shorten-end=-25pt] &  & \Ddots
	\end{NiceMatrix} \right].
\end{equation*}
The block 
\begin{equation*}
	\begin{bmatrix}
		R_{N-1} & R_{N-2} \\
		R_N & R_{N-1}  
	\end{bmatrix} =\begin{bmatrix}
		\left[ R^1_{N-1} \right]_{(p-r)\times r} & & \left[ R^2_{N-1} \right]_{(p-r)\times (p-r)} & & &  \\ 
		& & & & R_{N-2} & \\
		I_{r\times r} & & \left[ R^4_{N-1} \right]_{r \times (p-r)} &  &  & \\ \\
		0_{(p-r)\times r} & & I_{(p-r)} & \left[ R^1_{N-1} \right]_{(p-r)\times r} & & \left[ R^2_{N-1} \right]_{(p-r)\times (p-r)} \\ \\
		0_{r\times r} & & 0_{r \times (p-r)} & I_{r\times r} & & \left[ R^4_{N-1} \right]_{r \times (p-r)}
	\end{bmatrix}
\end{equation*} has $p-r$ subdiagonals. Up to the matrix $R_{N-1}$, there will be $M-1$ matrices, which sum up to a total of $Np-r$ subdiagonals.
\end{proof}

\begin{Corollary} \label{ProyectorK-2}
    If the leading matrix of a given polynomial of degree $N$ is of the form: 
    \begin{equation*}
        P_N = \begin{bNiceMatrix}
            0_{(p-r) \times r} & I_{(p-r)} \\[2pt]
            0_r & 0_{r \times (p-r)}
        \end{bNiceMatrix},
    \end{equation*}
    then the projection property shown in Proposition \ref{ProyectorK}  holds for $n \geq Np + p -1 - r$.
\begin{proof}
    For $0 \leq i \leq p-1$, we have: 
    \begin{align*}
        \sum_{a=1}^p \sum_{b=1}^q \int_\Delta B_{Np+p-1-i}^{(b)}(t) \d \mu_{b,a}(t) \left( P(t) \right)_{a,\Tilde{a}} = \sum_{a=1}^p \left[ \sum_{b=1}^q \int_\Delta B_{Np+p-1-i}^{(b)}(t) \d \mu_{b,a}(t) t^N \right] \left( P_N \right)_{a,\Tilde{a}}.
    \end{align*}
    It is known that $\left( P_N \right)_{a,\Tilde{a}} = 0$ for $\Tilde{a}, a = \{ p-r+1, \dots, p \}$, 
    \begin{align*}
        \sum_{a=1}^p \left[ \sum_{b=1}^q \int_\Delta B_{Np+p-1-i}^{(b)}(t) \d \mu_{b,a}(t) t^N \right] \left( P_N \right)_{a,\Tilde{a}} = \sum_{a=1}^{p-r} \left[ \sum_{b=1}^q \int_\Delta B_{Np+p-1-i}^{(b)}(t) \d \mu_{b,a}(t) t^N \right] \left( P_N \right)_{a,\Tilde{a}},
    \end{align*}
    To eliminate the last term by using the orthogonality relations, we have to choose $i$ such that: 
    \begin{equation*}
        \ceil[\Big]{\frac{Np-1+i-a+2}{p}} - 1 \geq \ceil[\Big]{\frac{Np-1+i-p+r+2}{p}} - 1 \geq N. 
    \end{equation*}
    Therefore, $i \leq r$, and the result is independent of $n$ for $n \geq Np + p - 1 - r$.
\end{proof}
\end{Corollary}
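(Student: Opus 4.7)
The strategy is to adapt the proof of Proposition \ref{ProyectorK} by exploiting the zero structure of $P_N$ to relax the threshold on $n$. Following the same skeleton, it suffices to show that $\int_\Delta K^{[n]}(x,t)\d\mu(t) P(t)$ is independent of $n$ in the stated range; once that is established, letting $n \to \infty$ and invoking the Gauss--Borel factorization of $\mathscr{M}$ reproduces $P(x)$ exactly as in the original proof.

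Writing $P(t)=\sum_{l=0}^{N}P_{l}t^{l}$, the entrywise computation reduces to expressions of the form $\sum_{a=1}^{p}(P_{l})_{a,\tilde{a}}\int_\Delta\sum_{b=1}^{q}B_{n}^{(b)}(t)\d\mu_{b,a}(t)\,t^{l}$. I would split this analysis into the sub-leading degrees and the leading degree. The diagonal orthogonality relation from the introduction gives $\int_\Delta\sum_{b}B_{n}^{(b)}\d\mu_{b,a}\,t^{l}=0$ whenever $l\leq\lceil(n-a+2)/p\rceil-1$. For $l\leq N-1$, a direct ceiling computation shows this is fulfilled for every $a\in\{1,\dots,p\}$ as soon as $n\geq Np-1$, which is implied by the hypothesis $n\geq Np+p-1-r$ (recall $r\leq p-1$). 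All sub-leading contributions therefore vanish automatically.

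The only surviving piece is the leading term $P_{N}t^{N}$, and here the prescribed block structure is decisive: $(P_{N})_{a,\tilde{a}}=0$ for every $a\in\{p-r+1,\dots,p\}$, so the outer sum collapses to $a\in\{1,\dots,p-r\}$. For those rows, orthogonality at degree $l=N$ translates via the ceiling inequality into $n\geq Np+a-1$, whose worst case $a=p-r$ yields exactly the claimed bound $n\geq Np+p-1-r$. Granted this, the integral over the range of $n$ is constant, and the $n\to\infty$ argument of Proposition \ref{ProyectorK} carries over verbatim.

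The main technical obstacle is bookkeeping the ceiling arithmetic carefully enough to verify that the single threshold $n\geq Np+p-1-r$ is simultaneously safe for the sub-leading degrees across all $a$, and at the same time tight for the leading block $a\leq p-r$. A sanity check at $r=0$, where the permitted $a$-range becomes $\{1,\dots,p\}$ and one recovers $n\geq Np+p-1$, confirms consistency with the original Proposition \ref{ProyectorK}.
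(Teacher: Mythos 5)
Your proposal is correct and follows essentially the same route as the paper: reduce the pairing of $B_{n}$ with $P$ to its leading term, use the vanishing of the last $r$ rows of $P_N$ to restrict the sum to $a\in\{1,\dots,p-r\}$, and then apply the ceiling arithmetic of the orthogonality relations to obtain the improved threshold $n\geq Np+p-1-r$, concluding via the $n\to\infty$ Gauss--Borel argument of Proposition \ref{ProyectorK}. The only difference is cosmetic: you spell out explicitly that the sub-leading degrees $l\leq N-1$ already vanish for $n\geq Np-1$, a step the paper leaves implicit.
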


\section{Geronimus Perturbation}

In this section, we analyze Geronimus perturbations, starting with the case of simple eigenvalues and extending to eigenvalues of arbitrary multiplicity. We also explore left multiplication within this framework and present a worked example, focusing on non-trivial perturbations of Jacobi–Piñeiro polynomials with three weights.

    \subsection{Simple Eigenvalues}
    
To illustrate our main results in detail, we will examine a specific case of matrix polynomial perturbation. Let us assume that all roots of the determinantal polynomial of $R(x)$ are simple, i.e., $M = Np - r$. A Geronimus perturbation of the measure is given by:
\begin{equation*}
    \d \check{\mu}(x)R(x) = \d \mu(x),
\end{equation*}
subject to the condition $\sigma(R) \cap \Delta = \emptyset$. The perturbed measure can be expressed in terms of $\mu$ and $R(x)$ as follows:
\begin{equation}\label{eq:perturbed measure}
    \check{\mu}(x) = \mu(x)R^{-1}(x) + \sum_{i=1}^M \boldsymbol{\xi}_i(x) \boldsymbol{v_{i}^L} \delta(x-x_i),
\end{equation}
where $\delta$ denotes Dirac's delta distribution, and $\boldsymbol{v}_{i}^L$ are the left eigenvectors of $R(x)$ corresponding to simple eigenvalues. Here, $\boldsymbol{\xi}_i$ is an arbitrary column vector function of size $q$. 
\begin{Proposition} \label{ConnectM}
    The moment matrices satisfy the following relation:
    \begin{equation*}
        \check{\mathscr{M}}R(\Lambda_{[p]}^\top) = \mathscr{M}.
    \end{equation*}
\end{Proposition}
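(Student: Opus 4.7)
The plan is straightforward: reduce the claim to the defining Geronimus relation $\d\check\mu(x)\, R(x) = \d\mu(x)$ by moving $R(\Lambda_{[p]}^\top)$ through the column vector $X_{[p]}^\top(x)$ inside the integral defining $\check{\mathscr{M}}$.

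The first step is to record the intertwining identity
\[
X_{[p]}^\top(x)\, R(\Lambda_{[p]}^\top) = R(x)\, X_{[p]}^\top(x),
\]
which follows from transposing and iterating the fundamental relation $\Lambda_{[p]} X_{[p]}(x) = x X_{[p]}(x)$ to obtain $X_{[p]}^\top(x)(\Lambda_{[p]}^\top)^l = x^l X_{[p]}^\top(x)$, and then summing against the matrix coefficients $R_l$; these coefficients populate the block subdiagonals of $R(\Lambda_{[p]}^\top)$ exactly as shown in the explicit display of the preceding subsection. With this intertwining in hand, substituting into the definition of $\check{\mathscr{M}}$ and right-multiplying by $R(\Lambda_{[p]}^\top)$ chains together
\[
\check{\mathscr{M}}\, R(\Lambda_{[p]}^\top) = \int_\Delta X_{[q]}(x)\, \d\check\mu(x)\, X_{[p]}^\top(x)\, R(\Lambda_{[p]}^\top) = \int_\Delta X_{[q]}(x)\, \d\check\mu(x)\, R(x)\, X_{[p]}^\top(x),
\]
so the whole problem collapses to verifying $\d\check\mu(x)\, R(x) = \d\mu(x)$.

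The only subtle point, which I expect to be the main obstacle, is checking that this distributional identity really does hold for the explicit form \eqref{eq:perturbed measure}. The absolutely continuous part yields trivially $\mu(x) R^{-1}(x) R(x) = \mu(x)$. For each singular contribution one computes $\boldsymbol{\xi}_i(x)\boldsymbol{v}_i^L \delta(x-x_i)\, R(x) = \boldsymbol{\xi}_i(x_i)\boldsymbol{v}_i^L R(x_i)\, \delta(x-x_i)$, which vanishes by the defining property $\boldsymbol{v}_i^L R(x_i) = 0$ of the left eigenvector at the simple eigenvalue $x_i$. Once this bookkeeping with the Dirac masses is verified, substituting back into the previous display gives $\check{\mathscr{M}}\, R(\Lambda_{[p]}^\top) = \int_\Delta X_{[q]}(x)\, \d\mu(x)\, X_{[p]}^\top(x) = \mathscr{M}$, as required.
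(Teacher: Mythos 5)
Your proposal is correct and follows essentially the same route as the paper: both rest on the intertwining $X_{[p]}^\top(x)\,R(\Lambda_{[p]}^\top)=R(x)\,X_{[p]}^\top(x)$, obtained from $\Lambda_{[p]}X_{[p]}(x)=xX_{[p]}(x)$, and then reduce the claim to $\d\check{\mu}(x)\,R(x)=\d\mu(x)$. Your extra check that the explicit measure \eqref{eq:perturbed measure} satisfies this relation (the Dirac masses being killed by $\boldsymbol{v}_i^L R(x_i)=0$) is sound but not needed, since the Geronimus perturbation is defined by that relation in the paper.
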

\begin{proof}
    Recalling that $\Lambda_{[p]}X_{[p]}(x) = x X_{[p]}(x)$, we have:
    \begin{align*}
        \check{\mathscr{M}}R(\Lambda_{[p]}^\top) &= \int_\Delta X_{[q]}(x) \d \check{\mu}(x) X^\top_{[p]}(x) \sum_{i=0}^N R_i(\Lambda_{[p]}^\top)^i \\
        &= \int_\Delta X_{[q]}(x) \d \check{\mu}(x) \sum_{i=0}^N R_i X^\top_{[p]}(x)(\Lambda_{[p]}^\top)^i \\
        &= \int_\Delta X_{[q]}(x) \d \check{\mu}(x) \left( \sum_{i=0}^N R_i x^i \right) X^\top_{[p]}(x) = \mathscr{M},
    \end{align*}
    where $\C^{p \times \infty}[x]$ is considered a $\C^{p \times p}[x]$ bimodule.
\end{proof}
Assume the moment matrices have $LU$ factorizations:
\[\begin{aligned}
    \check{\mathscr{M}} &= \check{S}^{-1}\check{H} \check{\Bar{S}}^{-\top}, & \mathscr{M} &= S^{-1}H \Bar{S}^{-\top}.
\end{aligned}\]
That is, both lead to the existence of corresponding orthogonalities. 

\begin{Proposition} \label{MatrixConnect}
    There exists a matrix, referred to as the connection matrix, such that:
    \begin{align*}
        \Omega \coloneq \check{S}S^{-1} = \check{H}\check{\Bar{S}}^{\top} R(\Lambda_{[p]}^\top) \Bar{S}^{-\top} H^{-1}.
    \end{align*}
    The connection matrix is lower unitriangular, with at most $M$ nonzero subdiagonals. Explicitly: 
    \begin{align*}
		\Omega &= \begin{bNiceMatrix}
			1 & 0 & \Cdots[shorten-end=7pt] \\
			\Omega_{1,0} & 1 & 0 & \Cdots[shorten-end=7pt]  \\
			\Omega_{2,0} & \Omega_{2,1} & 1 & 0 & \Cdots[shorten-end=7pt] \\
			\Vdots & \Vdots & & \Ddots & \Ddots[shorten-end=8pt]  \\
			\Omega_{pN-r,0} & \Omega_{pN-r,1} & \Cdots & \Omega_{pN-r,pN-r-1} & 1 &  &  \\
			0 & \Omega_{pN-r+1,1} &  &  & \Omega_{pN-r+1,pN-r} & \Ddots[shorten-end=5pt] \\
			\Vdots[shorten-end=1pt] & \Ddots[shorten-end=-25pt] & \Ddots[shorten-end=-55pt]  & \Ddots[shorten-end=-55pt]  &  & \Ddots[shorten-end=-2pt]  \\
		\end{bNiceMatrix} \\&=  \begin{bNiceMatrix}
			\left[\Omega_{0,0}\right]_p& 0_p& \Cdots[shorten-end=7pt]  \\
			\left[\Omega_{1,0}\right]_p &\left[\Omega_{1,1}\right]_p & 0_p & \Cdots[shorten-end=7pt] \\
		\Vdots & \Vdots & \Ddots & \Ddots[shorten-end=5pt]\\[3pt]
				\left[\Omega_{N,0}\right]_p & \left[\Omega_{N,1}\right]_p & \Cdots &\left[\Omega_{N,N}\right]_p &  \\[3pt]
		0_p & \left[\Omega_{N+1,1}\right]_p & \Cdots & \left[\Omega_{N+1,N}\right]_p  & \Ddots[shorten-end=5pt]  \\
		\Vdots[shorten-end=1pt] & \Ddots[shorten-end=30pt] & \Ddots[shorten-end=-20pt]  & &  \Ddots[shorten-end=5pt]  \\
		\end{bNiceMatrix}.
	\end{align*}
Where $[\Omega_{n,m}]_p$ are square matrices with \( p \) rows and \( p \) columns.
\end{Proposition}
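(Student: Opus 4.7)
The plan is to read off the two representations of $\Omega$ directly from the moment-matrix identity of Proposition~\ref{ConnectM}, and then extract each structural property from whichever form makes it most transparent. Substituting the $LU$ factorizations $\check{\mathscr{M}} = \check{S}^{-1}\check{H}\check{\Bar{S}}^{-\top}$ and $\mathscr{M} = S^{-1}H\Bar{S}^{-\top}$ into $\check{\mathscr{M}}\,R(\Lambda_{[p]}^\top) = \mathscr{M}$ and then rearranging (multiplying on the left by $\check{S}$ and on the right by the appropriate combination of $\Bar{S}$ and $H^{-1}$) isolates $\check{S}S^{-1}$ on one side and a product involving $R(\Lambda_{[p]}^\top)$ flanked by diagonal and upper-unitriangular factors on the other, which delivers the stated equality of the two expressions for $\Omega$.

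The lower unitriangular property is then immediate from $\Omega = \check{S}S^{-1}$: both $\check{S}$ and $S$ are lower unitriangular by the left-normalization convention used for the $LU$ factorization, so their product is lower unitriangular too, with ones on the diagonal.

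For the band structure I use the second expression. The diagonal factors $\check{H}$ and $H^{-1}$, together with the transposed unitriangular factors, are all upper triangular, while by the previous proposition the core factor $R(\Lambda_{[p]}^\top)$ is lower triangular and vanishes strictly beyond the $(Np-r)$-th subdiagonal. A direct index count shows that sandwiching a lower-triangular matrix $L$ with at most $k$ subdiagonals between two upper-triangular matrices $U_1, U_2$ cannot widen the lower band: in $(U_1 L U_2)_{ij} = \sum_{m,n}(U_1)_{im} L_{mn}(U_2)_{nj}$ the constraints $m\geq i$, $n \leq j$, and $m - n \leq k$ jointly force $i - j \leq k$. Taking $k = Np - r$ yields at most $M = Np - r$ nonzero subdiagonals for $\Omega$, exactly the pattern displayed in the statement.

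The only real care needed is the propagation of the triangular shapes through the product; no subtle obstacle beyond accurate bookkeeping arises. The second displayed form of $\Omega$ in $p\times p$ blocks is simply the same matrix with its rows and columns grouped in blocks of size $p$, a regrouping that is consistent with the natural action of $\mathbb{C}^{p\times p}[x]$ on $\mathbb{C}^{p\times\infty}[x]$ used in Proposition~\ref{ConnectM}, so no separate argument is required.
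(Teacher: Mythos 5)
Your proposal is correct and follows essentially the same route as the paper: the equality is obtained by substituting the two $LU$ factorizations into $\check{\mathscr{M}}R(\Lambda_{[p]}^\top)=\mathscr{M}$, lower unitriangularity comes from $\Omega=\check S S^{-1}$, and the band width comes from the fact that $R(\Lambda_{[p]}^\top)$ has at most $Np-r$ subdiagonals. The paper states these points without detail, whereas you make the sandwich index count explicit, which is a harmless (and welcome) elaboration rather than a different argument.
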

\begin{proof}
    The equality follows from Proposition \ref{ConnectM}. The structure of the connection matrix follows from the properties of the $LU$ matrix factors and the fact that $R(\Lambda_{[p]}^\top)$ has $M$ subdiagonals.
\end{proof}
The connection matrix is introduced to establish explicit relations or connection formulas between the original and perturbed families of orthogonal polynomials.
\begin{Proposition} \label{ConnectAB}
    The following relations hold:
\[    \begin{aligned}
        \check{A}(x) \Omega &= R(x) A(x), & \Omega B(x) &= \check{B}(x).
    \end{aligned}\]
\end{Proposition}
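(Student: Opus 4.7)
The plan is to verify both identities by direct substitution, exploiting the two distinct expressions for the connection matrix $\Omega$ provided by Proposition \ref{MatrixConnect}.

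For the relation $\Omega B(x)=\check{B}(x)$, I would use the left form $\Omega=\check{S}S^{-1}$ together with the normalization formulas $B(x)=SX_{[q]}(x)$ and $\check{B}(x)=\check{S}X_{[q]}(x)$ coming from the Gauss--Borel factorization. The product
\[
\Omega B(x)=\check{S}S^{-1}\cdot SX_{[q]}(x)=\check{S}X_{[q]}(x)=\check{B}(x)
\]
telescopes at once; no further ingredient is needed.

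For the relation $\check{A}(x)\Omega=R(x)A(x)$, I would substitute the right-hand expression for $\Omega$ in terms of $R(\Lambda_{[p]}^\top)$ from Proposition \ref{MatrixConnect}, together with $A(x)=X_{[p]}^\top(x)\Bar{S}^\top H^{-1}$ and $\check{A}(x)=X_{[p]}^\top(x)\check{\Bar{S}}^\top\check{H}^{-1}$. The Gauss--Borel factors $\check{\Bar{S}}$ and $\check{H}$ flanking $R(\Lambda_{[p]}^\top)$ telescope against the matching factors appearing in $\check{A}(x)$, reducing the product to
\[
X_{[p]}^\top(x)\,R(\Lambda_{[p]}^\top)\,\Bar{S}^\top H^{-1}.
\]
The decisive step is then the bimodule identity
\[
X_{[p]}^\top(x)\,R(\Lambda_{[p]}^\top)=R(x)\,X_{[p]}^\top(x),
\]
which is already implicit in the proof of Proposition \ref{ConnectM}: each scalar monomial $x^i$ commutes with the coefficient $R_i$, and $X_{[p]}^\top(x)(\Lambda_{[p]}^\top)^i=x^i X_{[p]}^\top(x)$, so summing over $i$ turns $\sum_i X_{[p]}^\top(x)R_i(\Lambda_{[p]}^\top)^i$ into $\bigl(\sum_i R_i x^i\bigr)X_{[p]}^\top(x)=R(x)X_{[p]}^\top(x)$. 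Plugging this in yields $R(x)\cdot X_{[p]}^\top(x)\Bar{S}^\top H^{-1}=R(x)A(x)$, as desired.

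No genuine obstacle arises; both identities boil down to substitution calculations once the correct form of $\Omega$ is selected. The only point that must be handled carefully is the $\C^{p\times p}[x]$-bimodule interpretation in which the coefficient $R_i$ acts diagonally on each $p\times p$ block of $X_{[p]}^\top(x)$, so that $X_{[p]}^\top(x)R_i=R_i X_{[p]}^\top(x)$ and the commutation identity above is meaningful. This is the same convention already invoked in Proposition \ref{ConnectM}, so once it is in place the remainder is pure algebraic bookkeeping.
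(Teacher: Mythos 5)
Your proof is correct and takes essentially the same route as the paper: the paper telescopes $\Omega B(x)=\check S S^{-1}S X_{[q]}(x)=\check B(x)$ and declares the $A$-relation ``similar,'' which is exactly the computation you spell out using the second expression for $\Omega$, the definitions $A(x)=X_{[p]}^\top(x)\Bar S^\top H^{-1}$, $\check A(x)=X_{[p]}^\top(x)\check{\Bar S}^\top\check H^{-1}$, and the bimodule identity $X_{[p]}^\top(x)R(\Lambda_{[p]}^\top)=R(x)X_{[p]}^\top(x)$. Note only that your telescoping implicitly uses the form $\Omega=\check H\,\check{\Bar S}^{-\top}R(\Lambda_{[p]}^\top)\,\Bar S^{\top}H^{-1}$, which is what the two $LU$ factorizations actually yield (the displayed formula in Proposition \ref{MatrixConnect} appears to have the transposes/inverses on $\check{\Bar S}$ and $\Bar S$ interchanged, a typo), so your version is the one that makes the cancellation work.
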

\begin{proof}
    These follow from the definitions of $\check{A}(x), \: A(x), \: \check{B}(x), \: B(x)$, and Proposition \ref{MatrixConnect}. For instance,
    \begin{equation*}
        \check{B}(x) = \check{S}X_{[q]}(x) = \check{S}S^{-1}SX_{[q]}(x) = \Omega B(x).
    \end{equation*}
\end{proof}
These connection formulas, however, are not particularly useful for our purpose. We aim to express the entries of the connection matrix in terms of the original orthogonal polynomials. One possible approach involves applying the different eigenvectors of $R(x)$ to the $A(x)$ connection formula and evaluating the resulting expressions at the corresponding eigenvalues. However, this leads to the equation $\boldsymbol{v}_{i}^L R(x_i)A(x_i) = 0 = \boldsymbol{v}_{i}^L\check{A}(x_i) \Omega$, which does not allow us to solve the system of linear equations.

Cauchy transforms will be highly useful in this context, facilitating the derivation of Christoffel formulas.

\begin{Proposition} \label{ConnectCD}
    In terms of the Cauchy transforms of the original and perturbed orthogonal polynomials, the connection formulas are given by:
    \begin{align}
        \label{EqConnectC} \check{C}(x) \Omega &= C(x), \\
        \label{EqConnectD} \check{D}(x) R(x) &= \Omega D(x) + \int_\Delta \check{B}(y) \d \check{\mu}(y) \frac{R(x) - R(y)}{x-y}.
    \end{align}
\end{Proposition}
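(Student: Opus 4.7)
The plan is to derive both identities by combining the algebraic connection formulas of Proposition \ref{ConnectAB} with the distributional identity $\d\check{\mu}(y)\,R(y)=\d\mu(y)$. The latter can be read off directly from the explicit form \eqref{eq:perturbed measure}: the absolutely continuous part gives $\d\mu(y)\,R^{-1}(y)\,R(y)=\d\mu(y)$, and each atomic term contributes $\boldsymbol{\xi}_i(y)\,\boldsymbol{v}_i^L\,R(y)\,\delta(y-x_i)=\boldsymbol{\xi}_i(x_i)\,\boldsymbol{v}_i^L\,R(x_i)\,\delta(y-x_i)=0$, because $\boldsymbol{v}_i^L$ is a left eigenvector of $R$ at $x_i$. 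I would record this as a preliminary observation, since both parts of the proof rest on it.

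For \eqref{EqConnectC}, I would start from $\check{C}(x)=\int_\Delta \frac{\d\check{\mu}(y)}{x-y}\check{A}(y)$, multiply on the right by $\Omega$, pull $\Omega$ inside the integral, and apply $\check{A}(y)\Omega=R(y)A(y)$ from Proposition \ref{ConnectAB}. The integrand becomes $\frac{1}{x-y}\d\check{\mu}(y)\,R(y)\,A(y)$, and invoking the measure identity collapses it to $\frac{\d\mu(y)}{x-y}A(y)$, whose integral is $C(x)$.

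For \eqref{EqConnectD}, I would rewrite both sides of the desired equality as integrals against the same measure $\d\check{\mu}$ and then compare. Using $\Omega B(y)=\check{B}(y)$, I obtain $\Omega D(x)=\int_\Delta \check{B}(y)\frac{\d\mu(y)}{x-y}=\int_\Delta \check{B}(y)\,\d\check{\mu}(y)\,\frac{R(y)}{x-y}$, where the second equality substitutes $\d\mu(y)=\d\check{\mu}(y)\,R(y)$. On the other hand, since $R(x)$ does not depend on $y$, $\check{D}(x)\,R(x)=\int_\Delta \check{B}(y)\,\d\check{\mu}(y)\,\frac{R(x)}{x-y}$. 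Subtracting gives exactly $\int_\Delta \check{B}(y)\,\d\check{\mu}(y)\,\frac{R(x)-R(y)}{x-y}$, which is the claimed formula. The divided difference $(R(x)-R(y))/(x-y)$ is a genuine matrix polynomial in $(x,y)$, so no singularity arises at $y=x$ and pairing with the Dirac atoms of $\d\check{\mu}$ is unambiguous.

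The main obstacle is bookkeeping rather than anything conceptual: one must be meticulous about noncommutativity, keeping the factors $\check{B}(y)$, $\d\check{\mu}(y)$, and $R$ in the correct order at every step, and one must verify that the atomic contributions to $\d\check{\mu}$ really are annihilated by $R$ on the right in each intermediate formula. Once the measure identity $\d\check{\mu}(y)\,R(y)=\d\mu(y)$ is secured, both equalities reduce to a short algebraic computation built on Proposition \ref{ConnectAB}.
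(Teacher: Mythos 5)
Your proposal is correct and follows essentially the same route as the paper: for \eqref{EqConnectD} the paper likewise writes $\Omega D(x)=\int_\Delta \check{B}(y)\frac{\d\mu(y)}{x-y}$ via Proposition \ref{ConnectAB}, substitutes $\d\mu(y)=\d\check{\mu}(y)R(y)$, and adds and subtracts the $R(x)$ term, with the first identity handled "similarly" just as you do it. Your preliminary check that the atomic parts of $\d\check{\mu}$ are annihilated on the right by $R$ is a harmless elaboration of the defining relation $\d\check{\mu}(x)R(x)=\d\mu(x)$ that the paper takes as the starting point.
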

\begin{proof}
    We will prove the second relation, as the first can be shown similarly. We have:
    \begin{align*}
        \Omega D(x) &= \int_\Delta \Omega B(y) \frac{\d \mu(y)}{x-y} = \int_\Delta \check{B}(y) \frac{\d \mu(y)}{x-y} \\
        &=  \int_\Delta \check{B}(y) \frac{\d \check{\mu}(y)R(y)}{x-y} \\
        &= \check{D}(x)R(x) - \int_\Delta \check{B}(y) \frac{\d \check{\mu}(y)R(x)}{x-y} + \int_\Delta \check{B}(y) \frac{\d \check{\mu}(y)R(y)}{x-y}.
    \end{align*}
\end{proof}
\begin{Lemma}\label{R-BivaMP}
  The matrix function   $\frac{R(x) - R(y)}{x-y}$ is a bivariate matrix polynomial of degree $N-1$ in both variables. 
\end{Lemma}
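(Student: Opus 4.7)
The plan is to reduce the statement to a single scalar identity applied coefficient-by-coefficient. Write $R(x) = \sum_{l=0}^{N} R_l x^l$ with $R_l \in \mathbb{C}^{p \times p}$, so that
\[
R(x) - R(y) = \sum_{l=0}^{N} R_l \bigl(x^l - y^l\bigr).
\]
The $l=0$ term vanishes, and for each $l \geq 1$ one has the standard factorization $x^l - y^l = (x-y)\sum_{k=0}^{l-1} x^k y^{l-1-k}$, which is the only nontrivial ingredient needed. Since the scalars $x-y$ commute with the matrix coefficients $R_l$, dividing term-by-term is legitimate.

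Carrying this out, I would write
\[
\frac{R(x) - R(y)}{x-y} = \sum_{l=1}^{N} R_l \sum_{k=0}^{l-1} x^k y^{l-1-k},
\]
which is manifestly a bivariate polynomial in $x$ and $y$ with matrix coefficients in $\mathbb{C}^{p \times p}$. Inspecting the monomials $x^k y^{l-1-k}$ that appear, the exponents satisfy $0 \leq k \leq l-1 \leq N-1$ and $0 \leq l-1-k \leq l-1 \leq N-1$, so the degree in each variable is at most $N-1$.

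This argument is essentially a one-line computation, so there is no real obstacle; the only point worth making explicit in the final write-up is that the identity is valid over the matrix ring because $(x-y)$ is a central scalar, and that the highest bidegree term $R_N\, x^{N-1}$ (respectively $R_N\, y^{N-1}$) ensures the bound $N-1$ is attained when $R_N \neq 0$.
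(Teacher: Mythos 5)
Your proof is correct and follows essentially the same route as the paper: both expand $R(x)-R(y)$ coefficient-by-coefficient and apply the standard factorization $x^l - y^l = (x-y)\sum_{k=0}^{l-1} x^k y^{l-1-k}$, arriving at the same bivariate polynomial expression (the paper's formula \eqref{eq:Rxy} with a reindexed sum). The only additions on your side, noting centrality of the scalar $x-y$ and that the bound $N-1$ is attained when $R_N \neq 0$, are harmless refinements.
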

\begin{proof}
    Using the identity:
    \begin{equation*}
        x^n - y^n = (x-y)\left( \sum_{i=1}^n x^{n-i}y^{i-1} \right),
    \end{equation*}
   it is straightforward to verify that:
   \begin{equation}\label{eq:Rxy}
       \frac{R(x) - R(y)}{x-y} = \sum_{i=1}^N R_i \sum_{j=1}^i \left( x^{i-j}y^{j-1} \right).
   \end{equation}
   Hence, it is a polynomial in both $x$ and $y$. 
\end{proof}
Let's examine the commutator, $[\Omega,\Pi_n]$, where $\Pi_n$ is the diagonal matrix with all entries zero, except for the first $n+1$ entries, which are equal to unity. We have:
\begin{align} \label{Commutator}
    [\Omega,\Pi_n] &= \Omega\Pi_n - \Omega^{[n]} \\ &= \left[\begin{NiceMatrix}[columns-width=1.2cm]
	0 & \Cdots & 0 & \Cdots \\
	\Vdots & & \Vdots \\
	0 & \Cdots & \Omega_{n+1,n-M+1} & \Cdots & & \Omega_{n+1,n} & 0 & \Cdots \\
	\Vdots & & 0 & \Ddots & & \Vdots & \Vdots & \\
	& & \Vdots & & & & \\
	& & & & \Ddots[shorten-end=20pt] & \Omega_{n+M,n} &&\\\\
	0 & \Cdots & 0 & \Cdots & & 0 &&\\
	\Vdots[shorten-end=2pt] & & \Vdots[shorten-end=2pt] & & & \Vdots[shorten-end=2pt]
    \end{NiceMatrix}\right],
\end{align}
where $\Omega_{n+1,n-M+1}$ occupies the $(n,n-M)$ entry (counting $n=0$ as the first entry) of the matrix. For $n < M$, we assume that the commutator starts at the first column, and any negative indices should be disregarded.
\begin{Definition}\label{D and W bb}
  For $i\in\{1,\dots,M\}$ and $n\in\N_0$  we introduce the following notation
\[    \begin{aligned}
        \mathbb{D}_n^{(i)} & \coloneq \sum_{a=1}^p D_n^{(a)}(x_i)v_{i;a}^R, & \mathbb{W}_n^{(i)} & \coloneq \sum_{b=1}^q B_n^{(b)}(x_i)\xi_{i;b}(x_i)\boldsymbol{v}_{i}^L R'(x_i) \boldsymbol{v}_{i}^R. 
    \end{aligned}  \]
\end{Definition}
Let us now solve the linear system for the components of $\Omega$ using Proposition \ref{ConnectCD}. 
\begin{Proposition} \label{OmegaComponents}
    For $n \geq M$, the entries of $\Omega$ are subject to the inhomogeneous linear system
    \begin{equation} \label{EqOmegaComponents-n>M}
        \begin{bNiceMatrix}
            \Omega_{n,n-M} & \Cdots & \Omega_{n,n-1} 
        \end{bNiceMatrix}  \begin{bNiceMatrix}
        \mathbb{D}_{n-M}^{(1)}- \mathbb{W}_{n-M}^{(1)} & \Cdots & \mathbb{D}_{n-M}^{(M)} - \mathbb{W}_{n-M}^{(M)}\\
        \Vdots & & \Vdots\\
        \mathbb{D}_{n-1}^{(1)} - \mathbb{W}_{n-1}^{(1)}& \Cdots & \mathbb{D}_{n-1}^{(M)} - \mathbb{W}_{n-1}^{(M)}
        \end{bNiceMatrix}= - \begin{bNiceMatrix}
            \mathbb{D}_n^{(1)}- \mathbb{W}_n^{(1)}& \Cdots & \mathbb{D}_n^{(M)} - \mathbb{W}_n^{(M)}
        \end{bNiceMatrix}. 
    \end{equation}
\end{Proposition}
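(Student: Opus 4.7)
The plan is to specialize the Cauchy--transform connection formula \eqref{EqConnectD} at $x=x_i$, after right-multiplication by $\boldsymbol{v}_i^R$, and read off the linear system from the row-$n$ component. The proof splits into three steps: a residue computation for the left-hand side, an orthogonality-driven vanishing of the integral on the right-hand side, and an appeal to the banded structure of $\Omega$.

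For the left-hand side I would write out $\check{D}(x)R(x)\boldsymbol{v}_i^R$ using the explicit form \eqref{eq:perturbed measure} of $\check{\mu}$. The absolutely continuous part vanishes at $x=x_i$ because $R(x_i)\boldsymbol{v}_i^R=0$; the Dirac contributions with $j\neq i$ vanish because $\boldsymbol{v}_j^L R(x_i)\boldsymbol{v}_i^R=0$; and the $j=i$ Dirac term contributes $\check{B}(x_i)\boldsymbol{\xi}_i(x_i)\boldsymbol{v}_i^L R'(x_i)\boldsymbol{v}_i^R$ through the L'Hôpital limit $\lim_{x\to x_i}R(x)\boldsymbol{v}_i^R/(x-x_i)=R'(x_i)\boldsymbol{v}_i^R$. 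Taking row~$n$ and expanding $\check{B}_n=\sum_k\Omega_{n,k}B_k$ converts this, by Definition~\ref{D and W bb}, into $\sum_k\Omega_{n,k}\mathbb{W}_k^{(i)}$.

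For the integral on the right-hand side, the integrand (with $\boldsymbol{v}_i^R$ appended) is $\check{B}_n(y)\,\d\check{\mu}(y)\,P(y)$, where $P(y)\coloneq\frac{R(x_i)-R(y)}{x_i-y}\boldsymbol{v}_i^R$ is, by Lemma~\ref{R-BivaMP}, a column-vector polynomial of degree at most $N-1$ in $y$. The block structure \eqref{CondicionesMatricesLideresFinales} of $R_N$ forces the last $r$ entries of $R_N\boldsymbol{v}_i^R$ to vanish, so $\deg P_a\le N-1$ for $a\le p-r$ while $\deg P_a\le N-2$ for $a>p-r$. Expanding $P_a(y)=\sum_l P_{a,l}y^l$ and applying the step-line orthogonality $\int\sum_b\check{B}_n^{(b)}(y)\,\d\check{\mu}_{b,a}(y)\,y^l=0$ for $l\le\lceil(n-a+2)/p\rceil-1$, a direct ceiling-function check confirms that $n\ge M$ suffices to cover both degree budgets (the binding case $a=p-r$ needing only $n\ge M-1$), so the integral vanishes.

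Combining the two steps, the row-$n$ identity reduces to $\sum_k\Omega_{n,k}\bigl(\mathbb{D}_k^{(i)}-\mathbb{W}_k^{(i)}\bigr)=0$ for every $i\in\{1,\ldots,M\}$, and the banded lower-triangular structure of $\Omega$ from Proposition~\ref{MatrixConnect} ($\Omega_{n,k}=0$ for $k<n-M$ and $\Omega_{n,n}=1$) isolates the diagonal term $k=n$ on the right, yielding precisely \eqref{EqOmegaComponents-n>M}. The main obstacle I expect is the residue calculation in the first step: one must simultaneously track the Dirac pole of $\check{D}$ at $x_i$ and the cancellation $R(x_i)\boldsymbol{v}_i^R=0$, ensuring that the surviving $j=i$ contribution is precisely the $R'(x_i)$ term matching the definition of $\mathbb{W}_n^{(i)}$. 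A secondary technical point is the degree bookkeeping for $P$: the block form \eqref{CondicionesMatricesLideresFinales} of $R_N$ is exactly what drops the bottom $r$ components of $P$ from degree $N-1$ to $N-2$, providing the slack needed for the orthogonality argument to close at the threshold $n\ge M$.
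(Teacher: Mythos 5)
Your proposal is correct and follows essentially the same route as the paper: evaluate the Cauchy-transform connection formula \eqref{EqConnectD} in the limit $x\to x_i$ against $\boldsymbol{v}_i^R$ (the $j=i$ Dirac term surviving as the $R'(x_i)$ contribution), kill the bivariate-polynomial integral by the step-line orthogonality and the degree drop forced by the block form of $R_N$ (the paper outsources this bookkeeping to Corollary \ref{ProyectorK-2}), and then use the banded unitriangular structure of $\Omega$ to isolate the row-$n$ system. The only cosmetic difference is that you spell out the degree count explicitly rather than citing the corollary.
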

\begin{proof}
    We cannot directly evaluate Equation \eqref{EqConnectD} at $x_i$, so we examine the following limit:
    \begin{align*}
        \lim_{x \rightarrow x_i} \check{D}(x) R(x) \boldsymbol{v}_{i}^R &= \lim_{x \rightarrow x_i} \left( \int_\Delta \check{B}(y) \frac{\d \mu(y)}{x-y}R^{-1}(y)R(x) + \sum_{j=1}^M \check{B}(x_j) \boldsymbol{\xi}(x_j)_j\boldsymbol{v}_{j}^L\frac{R(x)}{x-x_j}  \right) \boldsymbol{v}_{i}^R \\
        &= \check{B}(x_i) \boldsymbol{\xi}(x_i)_i\boldsymbol{v}_{i}^L \lim_{x \rightarrow x_i} \frac{R(x)-R(x_i)}{x-x_i} \boldsymbol{v}_{i}^R,
    \end{align*}
    where we have used the fact that $R(x_i)\boldsymbol{v}_{i}^R = 0$. The term that remains non-zero can be rewritten as
    \begin{equation*}
        \lim_{x \rightarrow x_i} \check{D}(x) R(x) \boldsymbol{v}_{i}^R = \check{B}(x_i) \boldsymbol{\xi}(x_i)_i\boldsymbol{v}_{i}^L R'(x_i) \boldsymbol{v}_{i}^R. 
    \end{equation*} 
    
    Let us now focus on the case $n \geq M$. By using the orthogonality relations for $B(x)$ and taking into account Proposition \ref{R-BivaMP} (see proof of Corollary \ref{ProyectorK-2}),
    \begin{equation*}
\begin{aligned}
	        \sum_{a=1}^p\sum_{b=1}^q \int_\Delta \check{B}_n^{(b)}(y) \d \check{\mu}_{b,a}(y) \frac{\left( R(x)-R(y) \right)_{a,\Tilde{a}}}{x-y} &= 0, & \Tilde{a}&\in \{1,\dots, p\}.
\end{aligned}
    \end{equation*}
    Taking the limit in Equation \eqref{EqConnectD} when $x \rightarrow x_i$, then multiplying by the corresponding right eigenvector, and finally considering the $n$-th component, we find
    \begin{equation*}
        \sum_{b=1}^q\check{B}_n^{(b)}(x_i) \xi_{i;b}(x_i)\boldsymbol{v}_{i}^L R'(x_i) \boldsymbol{v}_{i}^R = \left( \Omega D(x_i) \boldsymbol{v}_{i}^R \right)^{[n]}.  
    \end{equation*}
    Recalling Proposition \ref{ConnectAB}, we obtain
    \begin{multline*}
         \sum_{b=1}^q\left( B_n^{(b)}(x_i) + \Omega_{n,n-1}B_{n-1}^{(b)}(x_i) + \cdots + \Omega_{n,n-M}B_{n-M}^{(b)}(x_i) \right)\xi_{i;b}\boldsymbol{v}_{i}^L R'(x_i) \boldsymbol{v}_{i}^R   \\
         = \left( D_n^{(a)}(x_i) + \Omega_{n,n-1}D_{n-1}^{(a)}(x_i) + \cdots + \Omega_{n,n-M}D_{n-M}^{(a)}(x_i) \right) v_{i;a}^R,
    \end{multline*}
    and using the notation introduced, we can write
    \begin{equation*}
        \mathbb{W}_n^{(i)}+ \Omega_{n,n-1}\mathbb{W}_{n-1}^{(i)}+ \cdots + \Omega_{n,n-M}\mathbb{W}_{n-M}^{(i)}= \mathbb{D}_n^{(i)}+ \Omega_{n,n-1}\mathbb{D}_{n-1}^{(i)}+ \cdots + \Omega_{n,n-M}\mathbb{D}_{n-M}^{(i)}.
    \end{equation*}
    Rearranging the equation yields
    \begin{equation*}
        - \left( \mathbb{D}_n^{(i)}- \mathbb{W}_n^{(i)}\right) = \begin{bNiceMatrix}
            \Omega_{n,n-M} & \Cdots & \Omega_{n,n-1} 
        \end{bNiceMatrix}\begin{bNiceMatrix}
            \mathbb{D}_{n-M}^{(i)}- \mathbb{W}_{n-M}^{(i)}\\
            \Vdots \\
            \mathbb{D}_{n-1}^{(i)}- \mathbb{W}_{n-1}^{(i)}
        \end{bNiceMatrix}. 
    \end{equation*}
    Considering the $M$ distinct roots, it is straightforward to arrive at the stated equation.
\end{proof}
\begin{Definition}
    Let us introduce the following determinantal expressions:
    \begin{equation*}
\begin{aligned}
	        \tau_n^{(i)} &\coloneq \begin{vNiceMatrix}
            \mathbb{D}_{n-M}^{(1)} - \mathbb{W}_{n-M}^{(1)}& \Cdots & \mathbb{D}_{n-M}^{(M)}- \mathbb{W}_{n-M}^{(M)} \\
            \Vdots & & \Vdots \\
            \mathbb{D}_{n-i-1}^{(1)}- \mathbb{W}_{n-i-1}^{(1)}& \Cdots & \mathbb{D}_{n-i-1}^{(M)}- \mathbb{W}_{n-i-1}^{(M)}\\[5pt]
            \mathbb{D}_{n-i+1}^{(1)}- \mathbb{W}_{n-i+1}^{(1)}& \Cdots & \mathbb{D}_{n-i+1}^{(M)}- \mathbb{W}_{n-i+1}^{(M)}\\
            \Vdots & & \Vdots \\
            \mathbb{D}_{n}^{(1)}- \mathbb{W}_{n}^{(1)}& \Cdots & \mathbb{D}_{n}^{(M)}- \mathbb{W}_{n}^{(M)}
        \end{vNiceMatrix}, & i& \in \{0, \cdots, M \}, & n & \in \{M,M+1,M+2,\dots\}.
\end{aligned}
    \end{equation*}
    For \( i = M \), we will also use the notation $\tau_{n} \coloneq \tau_{n+1}^{(M)}$, $ n  \in\{M-1,M,M+1,\dots\}$.
\end{Definition}


\begin{Proposition} \label{ConnectKernelmix}
    The connection formulas for the mixed \( CD \) kernels, for \( n \geq M \), are given by:
    \begin{multline} \label{EqConnectKernelmix-n>M}
        \check{K}^{[n-1]}_{D}(x,y)R(y) = \frac{R(x) - R(y)}{x - y} + R(x)K^{[n-1]}_D (x,y) \\[3pt]
        - \begin{bNiceMatrix}
            \check{A}_n^{(1)}(x)  & \Cdots & \check{A}_{n+M-1}^{(1)}(x) \\
            \Vdots & & \Vdots \\
            \check{A}_n^{(p)}(x) & \Cdots & \check{A}_{n+M-1}^{(p)}(x)
        \end{bNiceMatrix}
        \begin{bNiceMatrix}
            \Omega_{n,n-M} & \Cdots & & \Omega_{n,n-1} \\
            0 & & & \\
            \Vdots & \Ddots & \Ddots[shorten-end=-8pt] & \Vdots \\
            0 & \Cdots & 0 & \Omega_{n+M-1,n-1}
        \end{bNiceMatrix}
        \begin{bNiceMatrix}
            D^{(1)}_{n-M}(y) & \Cdots & D^{(q)}_{n-M}(y) \\
            \Vdots & & \Vdots \\
            D^{(1)}_{n-1}(y) & \Cdots & D^{(q)}_{n-1}(y)
        \end{bNiceMatrix}. 
    \end{multline}
\end{Proposition}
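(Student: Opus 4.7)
My plan is to start from the integral representation
\[\check K^{[n-1]}_D(x,y)R(y)=\int_\Delta \check K^{[n-1]}(x,t)\,\d\check\mu(t)\,\frac{R(y)}{y-t},\]
and split the integrand via $R(y)=\bigl(R(y)-R(t)\bigr)+R(t)$. The first summand will be handled by the projection property of the perturbed kernel, the second by the relation $\d\check\mu(t)R(t)=\d\mu(t)$ --- which holds in the distributional sense because the Dirac contributions in \eqref{eq:perturbed measure} are killed by $\boldsymbol{v}_i^L R(x_i)=0$.

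By Lemma \ref{R-BivaMP} together with \eqref{eq:Rxy}, $P(t)\coloneq\frac{R(y)-R(t)}{y-t}$ is a matrix polynomial of degree $N-1$ in $t$ whose leading coefficient in $t$ is $R_N$, and this enjoys exactly the block structure required by Corollary \ref{ProyectorK-2}. Applying that corollary to $\check K^{[n-1]}$ requires $n-1\ge(N-1)p+p-1-r=M-1$, which matches our hypothesis $n\ge M$, and yields
\[\int_\Delta\check K^{[n-1]}(x,t)\,\d\check\mu(t)\,\frac{R(y)-R(t)}{y-t}=\frac{R(x)-R(y)}{x-y}.\]
For the second summand, after substituting $\d\check\mu(t)R(t)=\d\mu(t)$ and invoking the lower triangularity of $\Omega$ (so that $\check B^{[n-1]}=\Omega^{[n-1]}B^{[n-1]}$), the integral reduces to $\check A^{[n-1]}(x)\,\Omega^{[n-1]}\,D^{[n-1]}(y)$.

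The remaining task is to rewrite $\check A^{[n-1]}(x)\Omega^{[n-1]}$ in terms of the original polynomials via the truncated connection formula. Extracting the first $n$ columns of $\check A(x)\Omega=R(x)A(x)$ from Proposition \ref{ConnectAB}, the $M$-subdiagonal structure of $\Omega$ forces a boundary correction involving $\check A_n(x),\dots,\check A_{n+M-1}(x)$: the entries $\Omega_{k,j}$ with $k\in\{n,\dots,n+M-1\}$ and $j\in\{n-M,\dots,n-1\}$ that escape the principal $n\times n$ block assemble precisely into the upper triangular $M\times M$ matrix $\Omega^*$ displayed in the statement, while all remaining escaping entries vanish by bandedness. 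Thus
\[\check A^{[n-1]}(x)\Omega^{[n-1]}=R(x)A^{[n-1]}(x)-[\check A_n(x),\dots,\check A_{n+M-1}(x)]\,[0_{M\times(n-M)}\mid\Omega^*],\]
and multiplying on the right by $D^{[n-1]}(y)$ converts $R(x)A^{[n-1]}(x)D^{[n-1]}(y)$ into $R(x)K^{[n-1]}_D(x,y)$, leaving $\Omega^*$ acting on the last $M$ rows $D_{n-M}(y),\dots,D_{n-1}(y)$ of $D^{[n-1]}(y)$. Summing the two pieces delivers \eqref{EqConnectKernelmix-n>M}.

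The main obstacle is the finite-rank bookkeeping around the truncation: one must track precisely which entries of $\Omega$ leak past the principal $n\times n$ block and verify that they organize into the triangular $\Omega^*$ of the statement, so that exactly the rows $D_{n-M}(y),\dots,D_{n-1}(y)$ arise on the right. Once this boundary correction is correctly isolated, everything else is a direct application of the projection property of Corollary \ref{ProyectorK-2} and the connection relation of Proposition \ref{ConnectAB}.
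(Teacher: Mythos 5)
Your argument is correct and follows essentially the same route as the paper: the paper inserts the connection formula \eqref{EqConnectD} for $\check D(y)R(y)$ and handles the truncation via the commutator $[\Omega,\Pi_{n-1}]$ of \eqref{Commutator}, whereas you rederive that formula inline by splitting $R(y)=(R(y)-R(t))+R(t)$ (using $\d\check\mu\,R=\d\mu$) and track the same leaked $\Omega$-entries by hand, which is identical bookkeeping. Your explicit appeal to Corollary \ref{ProyectorK-2}, checking that the $t$-leading coefficient of $\frac{R(y)-R(t)}{y-t}$ is $R_N$, is in fact the precise justification needed for the stated range $n\ge M$ when $r>0$, which the paper cites somewhat more loosely through Proposition \ref{ProyectorK}.
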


\begin{proof}
    For \( n \geq M \), using Equation \eqref{EqConnectD}, we find:
    \begin{equation*}
        \check{K}^{[n-1]}_{D}(x,y)R(y) = \check{A}(x) \Pi_{n-1} \check{D}(y)R(y) = \check{A}(x) \Pi_{n-1}  \Omega D(y) + \check{A}(x)\Pi_{n-1}  \int_{\Delta} \check{B}(t) \, \d \mu(t) \frac{R(t) - R(y)}{t - y}.
    \end{equation*}
    Using Proposition \ref{ProyectorK} and \ref{R-BivaMP} , the last term equals \( \frac{R(x) - R(y)}{x - y} \). On the other hand,
    \begin{equation*}
        \check{A}(x) \Pi_{n-1}  \Omega D(y) = \check{A}(x) \Omega \Pi_{n-1}  D(y) - \check{A}(x) [\Omega, \Pi_{n-1} ] D(y).
    \end{equation*}
    From Proposition \ref{ConnectAB} and Equation \eqref{Commutator}, we obtain
    \begin{multline*}
        \check{A}(x) \Pi_{n-1}  \Omega D(y) = R(x)K^{[n-1]}_D (x,y) 
  - \begin{bNiceMatrix}
            \check{A}_n^{(1)}(x)  & \Cdots & \check{A}_{n+M-1}^{(1)}(x) \\
            \Vdots & & \Vdots \\
            \check{A}_n^{(p)}(x) & \Cdots & \check{A}_{n+M-1}^{(p)}(x)
        \end{bNiceMatrix}
        \begin{bNiceMatrix}
            \Omega_{n,n-M} & \Cdots & & \Omega_{n,n-1} \\
            0 & & & \\
            \Vdots & \Ddots & \Ddots[shorten-end=-8pt] & \Vdots \\
            0 & \Cdots & 0 & \Omega_{n+M-1,n-1}
        \end{bNiceMatrix} \\
 \times \begin{bNiceMatrix}
            D^{(1)}_{n-M}(y) & \Cdots & D^{(q)}_{n-M}(y) \\
            \Vdots & & \Vdots \\
            D^{(1)}_{n-1}(y) & \Cdots & D^{(q)}_{n-1}(y)
        \end{bNiceMatrix}.
    \end{multline*}
    Thus, the desired relation follows immediately.
\end{proof}
\begin{Definition} \label{K bb}
   Let us introduce the following vector polynomials: 
    \begin{equation*}
        \mathbb{K}^{[n-1],(i)}(x) \coloneq K^{[n-1]}_D (x,x_i)\boldsymbol{v}_{i}^R - K^{[n-1]}(x,x_i)\boldsymbol{\xi}_i \boldsymbol{v}_{i}^L R'(x_i) \boldsymbol{v}_{i}^R.
    \end{equation*}
    \begin{Proposition} \label{Futuro Tau}
    	    The following holds
    	    	\begin{multline} \label{EqSistLinealTau0}  
    	    		R(x)  \begin{bNiceMatrix}
    	    			\mathbb{K}^{[n-1],(1)}(x) + \frac{\boldsymbol{v}_{1}^R}{x-x_1} & \Cdots & \mathbb{K}^{[n-1],(M)}(x) + \frac{\boldsymbol{v}_{M}^R}{x-x_M}
    	    		\end{bNiceMatrix} \\ = \begin{bNiceMatrix}
    	    			\check{A}_n^{(1)}(x)  & \Cdots & \check{A}_{n+M-1}^{(1)}(x) \\
    	    			\Vdots & & \Vdots \\
    	    			\check{A}_n^{(p)}(x) & \Cdots & \check{A}_{n+M-1}^{(p)}(x)
    	    		\end{bNiceMatrix} \begin{bNiceMatrix}
    	    			\Omega_{n,n-M} & \Cdots & & \Omega_{n,n-1} \\
    	    			0 & & &   \\
    	    			\Vdots & \Ddots & \Ddots[shorten-end=-8pt] & \Vdots \\
    	    			0 & \Cdots & 0 & \Omega_{n+M-1,n-1}
    	    		\end{bNiceMatrix} \\\times\begin{bNiceMatrix}
    	    		\mathbb{D}_{n-M}^{(1)}- \mathbb{W}_{n-M}^{(1)}& \Cdots & \mathbb{D}_{n-M}^{(M)}- \mathbb{W}_{n-M}^{(M)}\\
    	    		\Vdots & & \Vdots \\
    	    		\mathbb{D}_{n-1}^{(1)}- \mathbb{W}_{n-1}^{(1)}& \Cdots & \mathbb{D}_{n-1}^{(M)}- \mathbb{W}_{n-1}^{(M)}
    	    		\end{bNiceMatrix}.
    	    \end{multline}
    \end{Proposition}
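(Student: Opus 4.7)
The plan is to specialise the kernel connection formula of Proposition \ref{ConnectKernelmix} at $y=x_i$ (after right multiplication by the eigenvector $\boldsymbol{v}_{i}^R$), and to collect the $M$ resulting columns into the matrix identity \eqref{EqSistLinealTau0}.

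First I would multiply \eqref{EqConnectKernelmix-n>M} on the right by $\boldsymbol{v}_{i}^R$ and pass to the limit $y\to x_i$. Since the right-hand side is polynomial in $y$, one simply evaluates at $y=x_i$: using $R(x_i)\boldsymbol{v}_{i}^R=0$, the first term collapses to $\frac{R(x)\boldsymbol{v}_{i}^R}{x-x_i}$, and the trailing block product yields a column whose entries are precisely the $\mathbb{D}^{(i)}_j$ of Definition \ref{D and W bb}. On the left-hand side, although $R(y)\boldsymbol{v}_{i}^R$ vanishes at $x_i$, the truncated Cauchy transform $\check{D}^{[n-1]}(y)$ carries a simple pole there coming from the Dirac piece of the perturbed measure \eqref{eq:perturbed measure}, of residue $\check{B}^{[n-1]}(x_i)\boldsymbol{\xi}_i(x_i)\boldsymbol{v}_{i}^L$. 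Pairing this simple pole with the simple zero of $R(y)\boldsymbol{v}_{i}^R=\bigl(R(y)-R(x_i)\bigr)\boldsymbol{v}_{i}^R$ and using $\frac{R(y)-R(x_i)}{y-x_i}\to R'(x_i)$, the singular part of $\check{D}^{[n-1]}$ contributes exactly one derivative while the regular Cauchy-integral part contributes zero, giving
\[
\lim_{y\to x_i}\check{K}^{[n-1]}_{D}(x,y)\,R(y)\,\boldsymbol{v}_{i}^R=\check{K}^{[n-1]}(x,x_i)\,\boldsymbol{\xi}_i(x_i)\,\boldsymbol{v}_{i}^L R'(x_i)\,\boldsymbol{v}_{i}^R.
\]

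Next I would eliminate the perturbed kernel $\check{K}^{[n-1]}$ in favour of the original $K^{[n-1]}$. From $\check{A}\Omega=RA$ and $\Omega B=\check{B}$ of Proposition \ref{ConnectAB}, together with the commutator decomposition \eqref{Commutator}, one obtains the companion identity
\[
\check{K}^{[n-1]}(x,y)=R(x)K^{[n-1]}(x,y)-\check{A}(x)\,[\Omega,\Pi_{n-1}]\,B(y),
\]
derived by the same manipulation as in the proof of Proposition \ref{ConnectKernelmix} but without any Cauchy-integral step. The strip matrix $[\Omega,\Pi_{n-1}]$ is exactly the block factor appearing in the last term of \eqref{EqConnectKernelmix-n>M}. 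Specialising at $y=x_i$ and multiplying by $\boldsymbol{\xi}_i(x_i)\boldsymbol{v}_{i}^L R'(x_i)\boldsymbol{v}_{i}^R$ turns each $B_{n-j}(x_i)\boldsymbol{\xi}_i(x_i)\boldsymbol{v}_{i}^L R'(x_i)\boldsymbol{v}_{i}^R$ into $\mathbb{W}^{(i)}_{n-j}$, producing the column $\mathbb{W}^{(i)}$ of Definition \ref{D and W bb}.

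Substituting this expression for $\check{K}^{[n-1]}(x,x_i)\boldsymbol{\xi}_i(x_i)\boldsymbol{v}_{i}^L R'(x_i)\boldsymbol{v}_{i}^R$ into the specialised form of \eqref{EqConnectKernelmix-n>M} and rearranging, the common block prefactor multiplies the difference column $\mathbb{D}^{(i)}-\mathbb{W}^{(i)}$, while the remaining $R(x)$-factored terms group, by Definition \ref{K bb}, into $R(x)\bigl(\mathbb{K}^{[n-1],(i)}(x)+\frac{\boldsymbol{v}_{i}^R}{x-x_i}\bigr)$. This is precisely the $i$-th column of \eqref{EqSistLinealTau0}, and running $i=1,\dots,M$ assembles the full matrix identity. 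I expect the main obstacle to be the residue bookkeeping on the left: one must carefully split $\check{D}^{[n-1]}(y)$ into its smooth Cauchy-integral part and its Dirac-delta part, verify that the smooth part is annihilated by the simple zero of $R(y)\boldsymbol{v}_{i}^R$, and confirm that the pole part yields exactly one derivative $R'(x_i)$; everything else reduces to algebraic rearrangement of identities already established in the paper.
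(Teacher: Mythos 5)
Your proposal is correct and follows essentially the same route as the paper: take $y\to x_i$ in \eqref{EqConnectKernelmix-n>M} applied to $\boldsymbol{v}_i^R$, evaluate the left-hand limit via the Dirac part of $\d\check\mu$ (only the $j=i$ mass survives and the simple zero of $R(y)\boldsymbol{v}_i^R$ produces $R'(x_i)$), then eliminate the perturbed kernel through $\check A\Omega=RA$, $\Omega B=\check B$ and the commutator $[\Omega,\Pi_{n-1}]$, and assemble the $M$ columns. The only nitpick is that the right-hand side of \eqref{EqConnectKernelmix-n>M} is not polynomial in $y$ (the terms $K^{[n-1]}_D(x,y)$ and $D_{n-j}(y)$ are Cauchy transforms), but since $x_i\notin\Delta$ they are holomorphic at $x_i$, so your evaluation step is still valid.
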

    \begin{proof}
    	 First, consider:
    	\begin{align*}
    			\lim_{y \to x_i} \check{K}^{[n-1]}_D (x,y)R(y)\boldsymbol{v}_{i}^R &= \lim_{y \to x_i} \check{A}(x) \Pi_{n-1}  \check{D}(y)R(y)\boldsymbol{v}_{i}^R \\
    			&= \begin{multlined}[t][.7\textwidth]
    				\check{A}(x) \Pi_{n-1}  \lim_{y \to x_i} \left( \Omega \int_\Delta B(t) \frac{\d \mu(t)}{y-t} R^{-1}(t)R(y)\boldsymbol{v}_{i}^R \right) \\
    			+ \check{A}(x) \Pi_{n-1}  \lim_{y \to x_i} \left( \sum_{j=1}^M \check{B}(x_j) \boldsymbol{\xi}_j(x_j)\boldsymbol{v}_{j}^L \frac{R(y)}{y-x_j}\boldsymbol{v}_{i}^R \right).
    			\end{multlined}
    	\end{align*}
    	Once again, the only nonzero term is the one corresponding to \( j = i \):
    	\begin{align*}
    		\lim_{y \to x_i} \check{K}^{[n-1]}_D (x,y)R(y)\boldsymbol{v}_{i}^R &= \check{A}(x) \Pi_{n-1}  \check{B}(x_i) \boldsymbol{\xi}_i(x_i)\boldsymbol{v}_{i}^L R'(x_i) \boldsymbol{v}_{i}^R \\
    		&= R(x)K^{[n-1]}(x,x_i)\boldsymbol{\xi}_i(x_i)\boldsymbol{v}_{i}^L R'(x_i) \boldsymbol{v}_{i}^R - \check{A}(x)[\Omega, \Pi_{n-1} ] B(x_i) \boldsymbol{\xi}_i(x_i)\boldsymbol{v}_{i}^L R'(x_i) \boldsymbol{v}_{i}^R.
    	\end{align*} 
    	
    	Taking the limit in Equation \eqref{EqConnectKernelmix-n>M} for \( y\to x_i \) and applying the corresponding Jordan chain:
    	\begin{multline*}
    			R(x)K^{[n-1]}(x,x_i)\boldsymbol{\xi}_i \boldsymbol{v}_{i}^L R'(x_i) \boldsymbol{v}_{i}^R -  \check{A}(x)[\Omega, \Pi_{n-1} ] B(x_i) \boldsymbol{\xi}_i(x_i)\boldsymbol{v}_{i}^L R'(x_i) \boldsymbol{v}_{i}^R  \\
    			= \frac{R(x)\boldsymbol{v}_{i}^R}{x-x_i} + R(x)K^{[n-1]}_D (x,x_i)\boldsymbol{v}_{i}^R - \check{A}(x) \left[ \Omega, \Pi_{n-1}  \right]D(x_i)\boldsymbol{v}_{i}^R.
    	\end{multline*}
    	Rearranging terms and introducing the notation mentioned above:
    	\begin{multline*}
    			R(x)\left( \mathbb{K}^{[n-1],(i)}(x) + \frac{\boldsymbol{v}_{i}^R}{x-x_i} \right) = \begin{bNiceMatrix}
    				\check{A}_n^{(1)}(x)  & \Cdots & \check{A}_{n+M-1}^{(1)}(x) \\
    				\Vdots & & \Vdots \\
    				\check{A}_n^{(p)}(x) & \Cdots & \check{A}_{n+M-1}^{(p)}(x)
    			\end{bNiceMatrix} \begin{bNiceMatrix}
    				\Omega_{n,n-M} & \Cdots & & \Omega_{n,n-1} \\
    				0 & & &   \\
    				\Vdots & \Ddots & \Ddots[shorten-end=-8pt] & \Vdots \\
    				0 & \Cdots & 0 & \Omega_{n+M-1,n-1}
    			\end{bNiceMatrix} \\
    			\times \begin{bNiceMatrix}
    				\mathbb{D}_{n-M}^{i)} - \mathbb{W}_{n-M}^{(i)} \\
    				\Vdots \\
    				\mathbb{D}_{n-1}^{(i)} - \mathbb{W}_{n-1}^{(i)}
    			\end{bNiceMatrix}. 
    	\end{multline*}

    \end{proof}
    
\end{Definition}

\begin{Theorem} \label{ExistsTheorem}
	If the perturbed orthogonality exists then $\tau_n\neq 0$, $ n  \in\{M-1,M,M+1,\dots\}$.
\end{Theorem}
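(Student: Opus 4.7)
The plan is to argue by contradiction using the matrix identity of Proposition \ref{Futuro Tau}. Suppose the perturbed orthogonality exists (so that $\Omega$ is well-defined and the factorization identity there holds), yet $\tau_n=0$ for some $n$ in the stated range. Under the convention $\tau_n\coloneq\tau_{n+1}^{(M)}$, $\tau_n$ is the determinant of the $M\times M$ block $[\mathbb{D}_{k}^{(i)}-\mathbb{W}_{k}^{(i)}]$ which appears (after suitably shifting the free index of Proposition \ref{Futuro Tau}) as the right-most factor of that identity. Its singularity produces a nonzero column vector $\boldsymbol{w}=(w_1,\dots,w_M)^{\top}\in\mathbb{C}^M$ in its right nullspace.

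Right-multiplying the identity of Proposition \ref{Futuro Tau} by $\boldsymbol{w}$ kills the right-hand side, so
\[
R(x)\sum_{i=1}^{M} w_i\left(\mathbb{K}^{[n-1],(i)}(x)+\frac{\boldsymbol{v}_{i}^R}{x-x_i}\right)=0.
\]
Because $R(x)$ is a regular matrix polynomial (its determinant is nonzero as a polynomial in $x$, of degree $Np-r$ by Proposition \ref{Prop2}), $R(x)^{-1}$ is a well-defined rational matrix function and thus
\[
\sum_{i=1}^{M} w_i\left(\mathbb{K}^{[n-1],(i)}(x)+\frac{\boldsymbol{v}_{i}^R}{x-x_i}\right)=0
\]
as an identity of $\mathbb{C}^p$-valued rational functions in $x$.

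The crux is then a residue computation. Each $\mathbb{K}^{[n-1],(i)}(x)$ is a polynomial vector and contributes no poles, while $\boldsymbol{v}_{i}^R/(x-x_i)$ has a simple pole at $x_i$ with residue $\boldsymbol{v}_{i}^R$. In the simple-eigenvalue case the $x_i$ are pairwise distinct, so the residue of the displayed sum at $x_j$ is exactly $w_j\,\boldsymbol{v}_{j}^R$. Vanishing of this residue, together with $\boldsymbol{v}_{j}^R\neq 0$, forces $w_j=0$ for each $j$, contradicting $\boldsymbol{w}\neq 0$. Hence $\tau_n\neq 0$.

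I expect the only real bookkeeping obstacle to be verifying that the determinantal block extracted from Proposition \ref{Futuro Tau} really coincides with $\tau_n$ once the shift $\tau_n=\tau_{n+1}^{(M)}$ is taken into account, and that the range $n\geq M-1$ of the theorem matches the admissible range implicit in Proposition \ref{Futuro Tau} (which inherits $n\geq M$ from Proposition \ref{OmegaComponents}). Beyond this routine reindexing the argument is quite rigid, using in an essential way only the simplicity of the eigenvalues and the standing assumption $\sigma(R)\cap\Delta=\emptyset$, which ensures the poles at the $x_i$ are separated from the support of $\mu$ and so the residue reading is unambiguous.
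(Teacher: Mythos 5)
Your proposal is correct and follows essentially the same route as the paper's own proof: take a nonzero vector in the kernel of the $\tau$-matrix, right-multiply the identity of Proposition \ref{Futuro Tau} to annihilate the right-hand side, cancel $R(x)$ for $x\notin\sigma(R)$, and extract the residues at the simple poles $x_i$ to force the null vector to vanish. The only cosmetic difference is that the paper performs the residue extraction via contour integrals $\oint_{C_i}$ around each eigenvalue, which is exactly your residue computation, and your remark about the index shift $\tau_{n}=\tau_{n+1}^{(M)}$ versus the range $n\geq M$ of Proposition \ref{OmegaComponents} is precisely the bookkeeping the paper handles by assuming $\tau_{n-1}=0$ in its argument.
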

\begin{proof}
	Let us assume that the orthogonality exists even though $\tau_{n-1} = 0$. Since $\tau_{n-1} = 0$, there exists a non-zero constant vector such that:
	\begin{equation*}
		\begin{bNiceMatrix}
			\mathbb{D}_{n-M}^{(1)}- \mathbb{W}_{n-M}^{(1)}& \Cdots & \mathbb{D}_{n-M}^{(M)}- \mathbb{W}_{n-M}^{(M)}\\
			\Vdots & & \Vdots \\
			\mathbb{D}_{n-1}^{(1)}- \mathbb{W}_{n-1}^{(1)}& \Cdots & \mathbb{D}_{n-1}^{(M)}- \mathbb{W}_{n-1}^{(M)}
		\end{bNiceMatrix} \boldsymbol{c} = 0. 
	\end{equation*}
	Considering \eqref{EqSistLinealTau0}, the following relation must also hold:
	\begin{equation*}
		R(x)\begin{bNiceMatrix}
			\mathbb{K}^{[n-1],(1)}(x) + \frac{\boldsymbol{v}_{1}^R}{x-x_1} & \Cdots & \mathbb{K}^{[n-1],(M)}(x) + \frac{\boldsymbol{v}_{M}^R}{x-x_M}
		\end{bNiceMatrix} \boldsymbol{c} = 0. 
	\end{equation*} 
For $x \notin \sigma\left( R \right)$, the last relation now reads:
	\begin{equation*}
		\begin{bNiceMatrix}
			\mathbb{K}^{[n-1],(1)}(x) + \frac{\boldsymbol{v}_{1}^R}{x-x_1} & \Cdots & \mathbb{K}^{[n-1],(M)}(x) + \frac{\boldsymbol{v}_{M}^R}{x-x_M}
		\end{bNiceMatrix} \boldsymbol{c} = 0. 
	\end{equation*}
	Let us integrate the last relation in the complex plane using a counterclockwise  contour $C_i$ surrounding once  $x_i$ and not any other eigenvalue: 
	\begin{align*}
		\oint_{C_i} \begin{bNiceMatrix}
			\mathbb{K}^{[n-1],(1)}(z) + \frac{\boldsymbol{v}_{1}^R}{z-x_1} & \Cdots & \mathbb{K}^{[n-1],(M)}(z) + \frac{\boldsymbol{v}_{M}^R}{z-x_M}
		\end{bNiceMatrix} \boldsymbol{c} \: \d z = 2\pi \mathrm{i} \begin{bNiceMatrix}
			0 & \Cdots &0& \boldsymbol{v}_{i}^R & 0&\Cdots & 0 
		\end{bNiceMatrix} \boldsymbol{c} = 0. 
	\end{align*}
	Which can only be true if $\boldsymbol{v}_{i}^R\times c_i = 0$, or equivalently, $\boldsymbol{c} = 0$ (remember $\boldsymbol{v}_{i}^R$ is a vector). We arrive at a contradiction and therefore, the initial assumption must be false.
\end{proof}

\begin{Theorem} \label{ExplicitFormulas}
 Let's assume that the perturbed orthogonality exists.   For $n\geq M$, the following relations hold:
\begin{equation}
	    \begin{aligned}
        \label{EqOmegaExplicit} \Omega_{n,n-M} &= (-1)^M \frac{\tau_n}{\tau_{n-1}}, &
        \Omega_{n,n-i} &= (-1)^i \frac{\tau_n^{(i)}}{\tau_{n-1}}.
    \end{aligned}
\end{equation}
Moreover, we have the following Christoffel type formulas
    \begin{align}
    \label{EqCheckA} \check{A}^{(a)}_n(x) & = \sum_{\Tilde{a}=1}^p \frac{R_{a,\Tilde{a}}(x)}{\tau_n} \begin{vNiceMatrix}
        \mathbb{D}_{n-M+1}^{(1)}- \mathbb{W}_{n-M+1}^{(1)}& \Cdots & \mathbb{D}_{n-M+1}^{(M)}- \mathbb{W}_{n-M+1}^{(M)}\\
        \Vdots & & \Vdots \\
        \mathbb{D}_{n-1}^{(1)}- \mathbb{W}_{n-1}^{(1)}& \Cdots & \mathbb{D}_{n-1}^{(M)}- \mathbb{W}_{n-1}^{(M)}\\
        \mathbb{K}_{\Tilde{a}}^{[n-M],(1)}(x) + \frac{v_{1;\Tilde{a}}^R}{x-x_1} & \Cdots & \mathbb{K}_{\Tilde{a}}^{[n-M],(M)}(x) + \frac{v_{M;\Tilde{a}}^R}{x-x_M}
    \end{vNiceMatrix}, \\
    \label{EqCheckB} \check{B}^{(b)}_{n}(x) & = \frac{1}{\tau_{n-1}}\begin{vNiceMatrix}
        \mathbb{D}_{n-M}^{(1)}- \mathbb{W}_{n-M}^{(1)}& \Cdots & \mathbb{D}_{n-M}^{(M)}- \mathbb{W}_{n-M}^{(M)}& B_{n-M}^{(b)}(x) \\
        \Vdots & & \Vdots & \Vdots \\
        \mathbb{D}_{n-1}^{(1)}- \mathbb{W}_{n-1}^{(1)}& \Cdots & \mathbb{D}_{n-1}^{(M)}- \mathbb{W}_{n-1}^{(M)}& B_{n-1}^{(b)}(x) \\[4pt]
        \mathbb{D}_{n}^{(1)}- \mathbb{W}_{n}^{(1)} & \Cdots & \mathbb{D}_{n}^{(M)}- \mathbb{W}_{n}^{(M)}& B_n^{(b)}(x)
    \end{vNiceMatrix}.
    \end{align}
\end{Theorem}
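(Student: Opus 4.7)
The plan is to apply Cramer's rule to the linear system of Proposition \ref{OmegaComponents} to extract explicit formulas for the entries of $\Omega$, and then to propagate these expressions through the connection identity $\check B(x)=\Omega B(x)$ of Proposition \ref{ConnectAB} (to obtain $\check B$) and through the matrix identity \eqref{EqSistLinealTau0} of Proposition \ref{Futuro Tau} (to obtain $\check A$). Throughout, Theorem \ref{ExistsTheorem} guarantees that all $\tau$-denominators are nonzero. For the entries of $\Omega$: the $M\times M$ coefficient matrix in \eqref{EqOmegaComponents-n>M} has determinant $\tau_{n-1}\neq 0$, and Cramer's rule expresses $\Omega_{n,n-i}$ as minus the determinant obtained by replacing the row at index $n-i$ with the right-hand side (the index-$n$ data), divided by $\tau_{n-1}$. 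A cyclic permutation through $i-1$ adjacent transpositions restores the natural increasing order of the rows, producing an overall sign $(-1)^i$ and identifying the numerator with $\tau_n^{(i)}$ by definition; the case $i=M$ specialises to $(-1)^M\tau_n/\tau_{n-1}$ under the shorthand $\tau_n=\tau_{n+1}^{(M)}$. This yields \eqref{EqOmegaExplicit}.

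For the formula for $\check B_n^{(b)}(x)$, Proposition \ref{ConnectAB} combined with the bandwidth-$M$ lower-unitriangular structure of $\Omega$ from Proposition \ref{MatrixConnect} gives
\[
\check B_n^{(b)}(x)=B_n^{(b)}(x)+\sum_{i=1}^M\Omega_{n,n-i}\,B_{n-i}^{(b)}(x).
\]
Inserting the explicit values from the previous step and extracting the common factor $1/\tau_{n-1}$, the resulting linear combination of the $B_{n-j}^{(b)}(x)$ is recognised as the Laplace expansion along the rightmost column of the $(M+1)\times(M+1)$ bordered determinant in \eqref{EqCheckB}, whose last column is $\bigl(B_{n-M}^{(b)}(x),\ldots,B_n^{(b)}(x)\bigr)^{\top}$.

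For $\check A_n^{(a)}(x)$, I would read \eqref{EqSistLinealTau0} as the matrix identity $R(x)\,\mathcal K(x)=\mathcal A(x)\,\mathcal O\,\mathcal M$, where $\mathcal K(x)$ is the $p\times M$ kernel-polynomial matrix on the left, $\mathcal A(x)=[\check A_n(x)\;\cdots\;\check A_{n+M-1}(x)]$ is $p\times M$, $\mathcal O$ is the $M\times M$ upper-triangular $\Omega$-block displayed there, and $\mathcal M$ is the $M\times M$ $(\mathbb D-\mathbb W)$-block with $\det\mathcal M=\tau_{n-1}$. Post-multiplying by $\mathcal M^{-1}$ and reading off the first column, the fact that the first column of $\mathcal O$ has only the top entry $\Omega_{n,n-M}$ nonzero isolates $\check A_n^{(a)}(x)\,\Omega_{n,n-M}=(R(x)\,\mathcal K(x)\,\mathcal M^{-1})_{a,1}$. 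Expressing the first column of $\mathcal M^{-1}$ through cofactors, substituting $\Omega_{n,n-M}=(-1)^M\tau_n/\tau_{n-1}$ so that the $\tau_{n-1}$-factors cancel, and recognising the resulting signed sum over columns as the Laplace expansion along the bottom row of the $M\times M$ determinant in \eqref{EqCheckA}—with the sum over $\tilde a$ coming from the contraction of $R(x)$ against its row index $a$—delivers the stated formula.

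The main obstacle is the sign- and index-bookkeeping in the $\check A$ step. Three sources of signs must telescope correctly: the cyclic permutations that turn Cramer's ratios into $\tau_n^{(i)}$, the cofactors produced when inverting $\mathcal M$, and the Laplace expansion of the final $M\times M$ determinant. One must also verify that the truncation superscript $[n-M]$ appearing on $\mathbb K_{\tilde a}^{[n-M],(j)}(x)$ in \eqref{EqCheckA} is indeed the one naturally produced by the cofactor-minor structure of the isolation procedure rather than being a choice imposed by hand.
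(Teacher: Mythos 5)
Your route is essentially the paper's: Cramer's rule (equivalently, the bordered\hyp{}determinant identity) applied to the system of Proposition \ref{OmegaComponents} gives \eqref{EqOmegaExplicit}; the banded connection formula $\check B(x)=\Omega B(x)$ of Proposition \ref{ConnectAB} together with a Laplace expansion along the last column gives \eqref{EqCheckB}; and isolating the first column of \eqref{EqSistLinealTau0} after right\hyp{}multiplication by the inverse of the $(\mathbb D-\mathbb W)$ block, then expanding that column by cofactors, gives the type I formula. This is exactly the sequence of steps in the paper's proof.

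The one point you flag but do not settle is precisely the point the paper has to argue separately, and it does not resolve itself the way you hope: the isolation procedure naturally produces $\mathbb K_{\tilde a}^{[n-1],(i)}(x)$ in the bottom row, because that is what appears in \eqref{EqSistLinealTau0}; the superscript $[n-M]$ in \eqref{EqCheckA} is \emph{not} generated by the cofactor\hyp{}minor structure. The passage from $[n-1]$ to $[n-M]$ is a short additional argument: since $\mathbb K^{[n-1],(i)}(x)-\mathbb K^{[n-M],(i)}(x)=\sum_{r=n-M+1}^{n-1}A_r(x)\bigl(\mathbb D_r^{(i)}-\mathbb W_r^{(i)}\bigr)$, the difference of the two candidate bottom rows is a linear combination of the rows already occupying the first $M-1$ positions of the determinant, so by multilinearity the extra terms yield determinants with a repeated row and vanish; the paper carries this out by expanding the determinant linearly in the last row and discarding the terms with $r\in\{n-M+1,\dots,n-1\}$. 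You should include this two\hyp{}line step. Your worry about the signs is also warranted rather than merely cosmetic: the factor $(-1)^{M-1}$ from moving the $\mathbb K$\hyp{}row to the bottom does not cancel against $\Omega_{n,n-M}=(-1)^{M}\tau_n/\tau_{n-1}$ in the way one would like, so when writing this up you should pin the global sign of \eqref{EqCheckA} down independently, e.g.\ by checking the $M=1$ specialisation against the connection identity $\check A_n(x)+\Omega_{n+1,n}\check A_{n+1}(x)=R(x)A_n(x)$ (the paper's proof asserts the conclusion without displaying this bookkeeping, and the check shows the overall sign deserves explicit verification).
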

\begin{proof}
    We will begin by proving Equation \eqref{EqOmegaExplicit}. Multiply by the vector \( \left[\begin{NiceMatrix} 1 & 0 & \Cdots \end{NiceMatrix}\right]^\top \) to obtain:
    \begin{equation*}
        \Omega_{n,n-M} = - \begin{bNiceMatrix}
            \mathbb{D}_n^{(1)}- \mathbb{W}_n^{(1)}& \Cdots & \mathbb{D}_n^{(M)}- \mathbb{W}_n(x_M)
        \end{bNiceMatrix}  \begin{bNiceMatrix}
            \mathbb{D}_{n-M}^{(1)}- \mathbb{W}_{n-M}^{(1)}& \Cdots & \mathbb{D}_{n-M}^{(M)}- \mathbb{W}_{n-M}^{(M)}\\
            \Vdots & & \Vdots \\
            \mathbb{D}_{n-1}^{(1)}- \mathbb{W}_{n-1}^{(1)}& \Cdots & \mathbb{D}_{n-1}^{(M)}- \mathbb{W}_{n-1}^{(M)}
        \end{bNiceMatrix}^{-1} \begin{bNiceMatrix}
            1 \\
            0 \\
            \Vdots[shorten-end=2pt]
        \end{bNiceMatrix}.
    \end{equation*}
    The last expression can be rewritten as:
    \begin{equation*}
        \Omega_{n,n-M} = \frac{\begin{vNiceMatrix}
            \mathbb{D}_{n-M}^{(1)}- \mathbb{W}_{n-M}^{(1)}& \Cdots & \mathbb{D}_{n-M}^{(M)}- \mathbb{W}_{n-M}^{(M)}& 1 \\
            \Vdots & & \Vdots & 0 \\
            \mathbb{D}_{n-1}^{(1)}- \mathbb{W}_{n-1}^{(1)}& \Cdots & \mathbb{D}_{n-1}^{(M)}- \mathbb{W}_{n-1}^{(M)}& \Vdots \\[4pt]
            \mathbb{D}_n^{(1)}- \mathbb{W}_n^{(1)}& \Cdots & \mathbb{D}_n^{(M)}- \mathbb{W}_n^{(M)}& 0
        \end{vNiceMatrix}}{\tau_{n-1}} = (-1)^M \frac{\tau_n}{\tau_{n-1}}.
    \end{equation*}
    The second relation in \eqref{EqOmegaExplicit} can be proven similarly. We now proceed to prove Equation \eqref{EqCheckA}. 
   From Equation \eqref{EqSistLinealTau0} we obtain
       	\begin{multline*}
   		R(x)  \begin{bNiceMatrix}
   			\mathbb{K}^{[n-1],(1)}(x) + \frac{\boldsymbol{v}_{1}^R}{x-x_1} & \Cdots & \mathbb{K}^{[n-1],(M)}(x) + \frac{\boldsymbol{v}_{M}^R}{x-x_M}
   		\end{bNiceMatrix} 
   	\begin{bNiceMatrix}
   			\mathbb{D}_{n-M}^{(1)}- \mathbb{W}_{n-M}^{(1)}& \Cdots & \mathbb{D}_{n-M}^{(M)}- \mathbb{W}_{n-M}^{(M)}\\
   			\Vdots & & \Vdots \\
   			\mathbb{D}_{n-1}^{(1)}- \mathbb{W}_{n-1}^{(1)}& \Cdots & \mathbb{D}_{n-1}^{(M)}- \mathbb{W}_{n-1}^{(M)}
   		\end{bNiceMatrix}^{-1} \\= \begin{bNiceMatrix}
   			\check{A}_n^{(1)}(x)  & \Cdots & \check{A}_{n+M-1}^{(1)}(x) \\
   			\Vdots & & \Vdots[shorten-end=-5pt] \\
   			\check{A}_n^{(p)}(x) & \Cdots & \check{A}_{n+M-1}^{(p)}(x)
   		\end{bNiceMatrix} \left[ \begin{NiceMatrix}
   			\Omega_{n,n-M} & \Cdots & & \Omega_{n,n-1} \\
   			0 & & &   \\
   			\Vdots & \Ddots & \Ddots[shorten-end=-8pt] & \Vdots \\
   			0 & \Cdots & 0 & \Omega_{n+M-1,n-1}
   		\end{NiceMatrix}\right].
   \end{multline*}
   
    Right-multiplying both terms by a matrix with a non-zero entry only in the first position yields:
    \begin{multline*}
         R(x) \begin{bNiceMatrix}
            \mathbb{K}^{[n-1],(1)}(x) + \frac{\boldsymbol{v}_{1}^R}{x-x_1} & \Cdots & \mathbb{K}^{[n-1],(M)}(x) + \frac{\boldsymbol{v}_{M}^R}{x-x_M}
        \end{bNiceMatrix} \\ 
        \times \begin{bNiceMatrix}
            \mathbb{D}_{n-M}^{(1)}- \mathbb{W}_{n-M}^{(1)}& \Cdots & \mathbb{D}_{n-M}^{(M)}- \mathbb{W}_{n-M}^{(M)}\\
            \Vdots & & \Vdots \\
            \mathbb{D}_{n-1}^{(1)}- \mathbb{W}_{n-1}^{(1)}& \Cdots & \mathbb{D}_{n-1}^{(M)}- \mathbb{W}_{n-1}^{(M)}
        \end{bNiceMatrix}^{-1} \left[\begin{NiceMatrix}
            1 & 0 & \Cdots[shorten-end=2pt] \\
            0 & 0& \Cdots[shorten-end=2pt] \\
            \Vdots[shorten-end=2pt]&      \Vdots[shorten-end=2pt]&
        \end{NiceMatrix} \right]= \Omega_{n,n-M} \begin{bNiceMatrix}
            \check{A}_n^{(1)}(x) & 0 & \Cdots[shorten-end=3pt] \\ 
            \Vdots & \Vdots \\ 
            \check{A}_n^{(p)}(x) & 0 & \Cdots[shorten-end=3pt] 
        \end{bNiceMatrix}.
    \end{multline*}
    Focusing only on the term \( \check{A}_n^{(a)}(x) \):
    \begin{multline*}
         \sum_{\Tilde{a}=1}^p R_{a,\Tilde{a}}(x) \begin{bNiceMatrix}
            \mathbb{K}_{\Tilde{a}}^{[n-1],(1)}(x) + \frac{v_{1;\Tilde{a}}^R}{x-x_1} & \Cdots & \mathbb{K}_{\Tilde{a}}^{[n-1],(M)}(x) + \frac{v_{M;\Tilde{a}}^R}{x-x_M}
        \end{bNiceMatrix} \\ 
        \times \begin{bNiceMatrix}
            \mathbb{D}_{n-M}^{(1)}- \mathbb{W}_{n-M}^{(1)}& \Cdots & \mathbb{D}_{n-M}^{(M)}- \mathbb{W}_{n-M}^{(M)}\\
            \Vdots & & \Vdots \\
            \mathbb{D}_{n-1}^{(1)}- \mathbb{W}_{n-1}^{(1)}& \Cdots & \mathbb{D}_{n-1}^{(M)}- \mathbb{W}_{n-1}^{(M)}
        \end{bNiceMatrix}^{-1} \begin{bNiceMatrix}
            1 \\
            0 \\
            \Vdots[shorten-end=2pt]
        \end{bNiceMatrix} = \Omega_{n,n-M}\check{A}_n^{(a)}(x).
    \end{multline*}
    From this relation and Equation \eqref{EqOmegaExplicit}, one can easily obtain the following result,
    \begin{equation*}
        \check{A}^{(a)}_n(x) = \sum_{\Tilde{a}=1}^p \frac{R_{a,\Tilde{a}}(x)}{\tau_n} \begin{vNiceMatrix}
        \mathbb{D}_{n-M+1}^{(1)}- \mathbb{W}_{n-M+1}^{(1)}& \Cdots & \mathbb{D}_{n-M+1}^{(M)}- \mathbb{W}_{n-M+1}^{(M)}\\
        \Vdots & & \Vdots \\
        \mathbb{D}_{n-1}^{(1)}- \mathbb{W}_{n-1}^{(1)}& \Cdots & \mathbb{D}_{n-1}^{(M)}- \mathbb{W}_{n-1}^{(M)}\\
        \mathbb{K}_{\Tilde{a}}^{[n-1],(1)}(x) + \frac{v_{1;\Tilde{a}}^R}{x-x_1} & \Cdots & \mathbb{K}_{\Tilde{a}}^{[n-1],(M)}(x) + \frac{v_{M;\Tilde{a}}^R}{x-x_M}
    \end{vNiceMatrix},
    \end{equation*}
    we can decompose this determinantal formula as follows
    \begin{multline*}
        \check{A}^{(a)}_n(x) = \sum_{\Tilde{a}=1}^p \frac{R_{a,\Tilde{a}}(x)}{\tau_n} \sum_{i=0}^{n-1} A_i^{(\tilde{a})}(x)\begin{vNiceMatrix}
        \mathbb{D}_{n-M+1}^{(1)}- \mathbb{W}_{n-M+1}^{(1)}& \Cdots & \mathbb{D}_{n-M+1}^{(M)}- \mathbb{W}_{n-M+1}^{(M)}\\
        \Vdots & & \Vdots \\
        \mathbb{D}_{n-1}^{(1)}- \mathbb{W}_{n-1}^{(1)}& \Cdots & \mathbb{D}_{n-1}^{(M)}- \mathbb{W}_{n-1}^{(M)}\\[4pt]
        \mathbb{D}_{i}^{(1)}- \mathbb{W}_{i}^{(1)}& \Cdots & \mathbb{D}_{i}^{(M)}- \mathbb{W}_{i}^{(M)}
    \end{vNiceMatrix} \\
    + \sum_{\Tilde{a}=1}^p \frac{R_{a,\Tilde{a}}(x)}{\tau_n}\begin{vNiceMatrix}
        \mathbb{D}_{n-M+1}^{(1)}- \mathbb{W}_{n-M+1}^{(1)}& \Cdots & \mathbb{D}_{n-M+1}^{(M)}- \mathbb{W}_{n-M+1}^{(M)}\\
        \Vdots & & \Vdots \\
        \mathbb{D}_{n-1}^{(1)}- \mathbb{W}_{n-1}^{(1)}& \Cdots & \mathbb{D}_{n-1}^{(M)}- \mathbb{W}_{n-1}^{(M)}\\[4pt
        ]
        \frac{v_{1;\Tilde{a}}^R}{x-x_1} & \Cdots & \frac{v_{M;\Tilde{a}}^R}{x-x_M}
    \end{vNiceMatrix}.
    \end{multline*}
    The sum on $i$ is truncated when $i = n-M+1$ up to $n-1$. We then obtain Equation \eqref{EqCheckA}.
    
    In order to prove Equation \eqref{EqCheckB}, let us make use of the second equation in Proposition \ref{ConnectAB}. Entrywise, we find (for  $n \geq M$):
    \begin{align*}
        \check{B}_n^{(b)}(x) & = \Omega_{n.n-M}B_{n-M}^{(b)}(x) + \dots + \Omega_{n,n-1}B_{n-1}^{(b)}(x) + B_n^{(b)}(x) \\
        & = B_n^{(b)}(x) + \begin{bNiceMatrix}
            \Omega_{n.n-M} & \Cdots & \Omega_{n.n-1}
        \end{bNiceMatrix}\begin{bNiceMatrix}
            B_{n-M}^{(b)}(x) \\
            \Vdots \\
            B_{n-1}^{(b)}(x)
        \end{bNiceMatrix}.
    \end{align*}
    Using now Proposition \ref{OmegaComponents}, this can be rewritten as: 
    \begin{align*}
        \check{B}_n^{(b)}(x) =  B_n^{(b)}(x) - & \begin{bNiceMatrix}
            \mathbb{D}_n^{(1)}- \mathbb{W}_n^{(1)}& \Cdots & \mathbb{D}_n^{(M)}- \mathbb{W}_n^{(M)}
        \end{bNiceMatrix} & \begin{bNiceMatrix}
            \mathbb{D}_{n-M}^{(1)}- \mathbb{W}_{n-M}^{(1)}& \Cdots & \mathbb{D}_{n-M}^{(M)}- \mathbb{W}_{n-M}^{(M)}\\
            \Vdots & & \Vdots\\
            \mathbb{D}_{n-1}^{(1)}- \mathbb{W}_{n-1}^{(1)}& \Cdots & \mathbb{D}_{n-1}^{(M)}- \mathbb{W}_{n-1}^{(M)}
        \end{bNiceMatrix}^{-1}\begin{bNiceMatrix}
            B_{n-M}^{(b)}(x) \\
            \Vdots \\
            B_{n-1}^{(b)}(x)
        \end{bNiceMatrix}.
    \end{align*}
    And from this last relation it is straight forward to prove Equation \eqref{EqCheckB}.
\end{proof}

The formulas for calculating $\check{A}(x)$ can be computationally challenging to obtain if one only wishes to compute them for a specific $n$. However, if the $\mathbb{K}^{[n-1],(i)}(x)$ have already been calculated, obtaining the next CD kernel requires merely adding an additional term to the sum.

\begin{Proposition} \label{kbb reducido}
    The terms $\mathbb{K}_{a}^{[n],(i)}(x)$ can be re-expressed as follows:
    \begin{multline*}
        \mathbb{K}_{a}^{[n],(i)}(x) = \frac{1}{(x-x_i)} \left[ \sum_{j=1}^p \sum_{l=1}^j T_{n+l,n-p+j} A_{n+l}^{(a)}(x) \left( \mathbb{D}_{n+j-p}^{(i)} - \mathbb{W}_{n+j-p}^{(i)} - D_{n+j-p}(x)\boldsymbol{v}_i^R \right) \right. \\ - \left. \sum_{j=0}^{q-1} \sum_{l=0}^j T_{n-l,n+q-j} A_{n-l}^{(a)}(x) \left( \mathbb{D}_{n+q-j}^{(i)} - \mathbb{W}_{n+q-j}^{(i)} - D_{n+q-j}(x)\boldsymbol{v}_i^R \right)\right]. 
    \end{multline*}
\end{Proposition}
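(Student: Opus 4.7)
The plan is to derive the identity by combining the two Christoffel--Darboux formulas of Theorems \ref{KcomoT} and \ref{KDcomoT} evaluated at $y=x_i$. Starting from Definition \ref{K bb}, multiplying by $x-x_i$, and inserting the CD identities for $K^{[n]}$ and $K^{[n]}_D$ produces six terms; I then group them by their left prefactor $\mathscr{A}^{[n]}(x)\mathscr{T}^{[n,>n]}$ or $\mathscr{A}^{[>n]}(x)\mathscr{T}^{[>n,n]}$. In each group the three values $\mathscr{D}(x)$, $\mathscr{D}(x_i)$, and $\mathscr{B}(x_i)\boldsymbol{\xi}_i(x_i)\boldsymbol{v}_i^L R'(x_i)\boldsymbol{v}_i^R$ combine using the identifications $\mathbb{D}_m^{(i)} = D_m(x_i)\boldsymbol{v}_i^R$ and $\mathbb{W}_m^{(i)} = B_m(x_i)\boldsymbol{\xi}_i(x_i)\boldsymbol{v}_i^L R'(x_i)\boldsymbol{v}_i^R$ from Definition \ref{D and W bb}, so that each bracket collapses coordinatewise into $\mathscr{D}(x)\boldsymbol{v}_i^R$ minus the column vector whose entries are $\mathbb{D}_m^{(i)}-\mathbb{W}_m^{(i)}$.

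The upshot is the intermediate compact identity
\[
(x-x_i)\mathbb{K}^{[n],(i)}(x) = \mathscr{A}^{[n]}(x)\mathscr{T}^{[n,>n]}\bigl[\mathscr{D}^{[>n]}(x)\boldsymbol{v}_i^R - \boldsymbol{\Delta}^{[>n],(i)}\bigr] - \mathscr{A}^{[>n]}(x)\mathscr{T}^{[>n,n]}\bigl[\mathscr{D}^{[n]}(x)\boldsymbol{v}_i^R - \boldsymbol{\Delta}^{[n],(i)}\bigr],
\]
where $\boldsymbol{\Delta}^{[n],(i)}$ and $\boldsymbol{\Delta}^{[>n],(i)}$ are the column vectors with entries $\mathbb{D}_m^{(i)}-\mathbb{W}_m^{(i)}$ for $m\in\{n-p+1,\dots,n\}$ and $m\in\{n+1,\dots,n+q\}$, respectively. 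To finish, I would read off the $a$-th row and expand each matrix product using the explicit banded shapes: $(\mathscr{T}^{[>n,n]})_{r,s}=T_{n+r,n-p+s}$ is nonzero only for $s\ge r$, and $(\mathscr{T}^{[n,>n]})_{r,s}=T_{n-q+r,n+s}$ only for $s\le r$. A reindexing by $j, l$ (with $l \leftrightarrow s$, $j \leftrightarrow r$ in the first block, and the reflected $l=q-s$, $j=q-r$ in the second) converts the two products into the nested sums $\sum_{j=1}^p\sum_{l=1}^j$ and $\sum_{j=0}^{q-1}\sum_{l=0}^j$ that appear in the statement; dividing by $x-x_i$ concludes.

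The proof is therefore essentially computational. The only (modest) obstacle is bookkeeping: one must track signs and ensure that the ranges $1\le l\le j\le p$ and $0\le l\le j\le q-1$ emerge correctly from the triangular-within-band structure of $\mathscr{T}^{[>n,n]}$ and $\mathscr{T}^{[n,>n]}$, and that the factor $\mathbb{D}_m^{(i)} - \mathbb{W}_m^{(i)} - D_m(x)\boldsymbol{v}_i^R$ materialises with the correct overall sign. Once the compact vector identity above is in hand, the remaining index translation is routine.
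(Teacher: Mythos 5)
Your proposal is correct and follows essentially the same route as the paper, whose proof is precisely the instruction to substitute the Christoffel--Darboux formulas of Theorems \ref{KcomoT} and \ref{KDcomoT} (evaluated at $y=x_i$) into Definition \ref{K bb} and simplify, using the identifications $\mathbb{D}_m^{(i)}=D_m(x_i)\boldsymbol{v}_i^R$ and $\mathbb{W}_m^{(i)}=B_m(x_i)\boldsymbol{\xi}_i(x_i)\boldsymbol{v}_i^L R'(x_i)\boldsymbol{v}_i^R$ from Definition \ref{D and W bb}. Your intermediate vector identity and the index bookkeeping for the triangular blocks $\mathscr{T}^{[>n,n]}$ and $\mathscr{T}^{[n,>n]}$ check out, and merely make explicit the simplifications the paper leaves to the reader.
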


\begin{proof}
    This result is obtained by substituting the conclusions of Theorems \ref{KcomoT} and \ref{KDcomoT} into Definition \ref{K bb} and performing certain simplifications.
\end{proof}

With this, we obtain expressions for the CD kernels involving sums that do not grow with $n$, depending only on $q$ and $p$.

\begin{Remark} \label{Remark1}
    From the second equation in Proposition \ref{ConnectAB}, one could also study the relation for $n < M$: 
    \begin{equation*}
        \check{B}_n^{(b)}(x) = \Omega_{n,0}B_{0}^{(b)}(x) + \dots + \Omega_{n,n-1}B_{n-1}^{(b)}(x) + B_n^{(b)}(x).
    \end{equation*}
    However, the explicit formulas for $\Omega_{i,j}$ when $i < M$ cannot be found. Studying the n-th component of Equation \eqref{EqConnectD}, taking the limit when $x \rightarrow x_i$ and acting on the right with $\boldsymbol{v}_i^R$ yields: 
    \begin{equation*}
        \left( \Omega B(x_i)\xi(x_i)\boldsymbol{v}_i^LR'(x_i)\boldsymbol{v}_i^R \right)^{[n]} = \left( \Omega D(x_i)\boldsymbol{v}_i^R - \lim_{x\rightarrow x_i}  \Omega\int_\Delta B(y) \d \check{\mu}(y) \frac{R(y)\boldsymbol{v}_i^R}{x-y} \right)^{[n]},
    \end{equation*}
    and explicitly decomposing the perturbed measure: 
    \begin{equation*}
        \Omega\int_\Delta B(y) \d \mu(y) \frac{\boldsymbol{v}_i^R}{x_i-y}-B(x_i)\xi(x_i)\lim_{x\rightarrow x_i}\frac{\boldsymbol{v}_i^LR(x_i)\boldsymbol{v}_i^R}{y-x_i} = \Omega D(x_i)\boldsymbol{v}_i^R - \Omega B(x_i)\xi(x_i)\boldsymbol{v}_i^LR'(x_i)\boldsymbol{v}_i^R . 
    \end{equation*}
    The relation is satisfied trivially and we cannot obtain any information on the $\Omega_{i,j}$.

   Nevertheless, we can assure that the maximum degrees for the $\check{B}_n^{(b)}(x)$ are reached for $n<M$. Since the maximum degrees appear in terms of the form $\check{B}_{ip+j}^{(1+j)}(x)$, 
   \begin{equation*}
       \check{B}_{ip+j}^{(1+j)}(x) = \check{B}_{ip+j}^{(1+j)}(x) + \Omega_{ip+j,ip+j-1} \check{B}_{ip+j-1}^{(1+j)}(x) + \dots + \Omega_{ip+j,0}\check{B}_{0}^{(1+j)}(x).
   \end{equation*}
   All the terms multiplied by the $\Omega$ components are strictly of lower degree and $\check{B}_{ip+j}^{(1+j)}(x)$ reaches the desired degree.
\end{Remark}
\subsection{Eigenvalues of Arbitrary Multiplicity} \label{GeneralSpectra}

We now address the case of a perturbation matrix with arbitrary multiplicity. In this context, the most general measure is expressed as follows:

\begin{equation*}
    \d \check{\mu}(x) = \d \mu(x)R^{-1}(x) + \sum_{i=1}^M\sum_{j=1}^{s_i}\sum_{k=0}^{\kappa_{i,j}-1}\boldsymbol{\xi}_{i,j,k}(x) \left( \sum_{l=0}^k\frac{(-1)^l}{l!} \boldsymbol{v}_{i,j;k-l}^L\delta^{(l)}(x-x_i)\right),
\end{equation*}
where the first summation accounts for the different roots, the second and third summations account for the number of vectors in the canonical set of Jordan chains (associated with each eigenvalue), and the final summation correctly expands the action of each Jordan chain. The total number of distinct masses is $Np-r$. Many of the results derived in the simple eigenvalue scenario can be extended to this case, provided that the full theory of Jordan chains for $R(x)$ is applied. However, the different limits evaluated in the proofs of Proposition \ref{OmegaComponents} and Proposition \ref{Futuro Tau} require more careful consideration. Before proceeding, we establish a useful lemma.

\begin{Lemma} \label{LemmaEigenvectors}
    For a holomorphic matrix function $f(x)$ (with holomorphic entries) and a matrix polynomial $R(x)$, both of order $p$, the following holds:
    \begin{equation*}
        \sum_{l=0}^k \frac{1}{l!} \boldsymbol{v}_{i,j;k-l}^L \frac{\d^l}{\d x^l} \left[ R(x) f(x) \right]_{x=x_i} = 0,
    \end{equation*}
    where $\boldsymbol{v}_{i,j;k-l}$ is a Jordan vector and $k \leq \kappa_{i,j}-1$. The same result holds if we apply this expression on the left to the combination $f(x)R(x)$.
\end{Lemma}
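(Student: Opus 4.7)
The plan is to reduce the claim to the defining Jordan chain identity by expanding the derivative of the product via Leibniz's rule and then reorganizing the double sum. Specifically, I will write
\[
\frac{\d^l}{\d x^l}\bigl[R(x)f(x)\bigr]_{x=x_i} = \sum_{m=0}^{l}\binom{l}{m} R^{(m)}(x_i)\, f^{(l-m)}(x_i),
\]
substitute this into the sum $\sum_{l=0}^k \frac{1}{l!}\boldsymbol{v}_{i,j;k-l}^L \frac{\d^l}{\d x^l}[R(x)f(x)]_{x=x_i}$, and observe that the result is a double sum over the index set $\{(l,m):0\leq m\leq l\leq k\}$ with summand $\frac{1}{m!(l-m)!}\boldsymbol{v}_{i,j;k-l}^L R^{(m)}(x_i) f^{(l-m)}(x_i)$.

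Next I will change variables by setting $n = l-m$, so that the new summation region is $\{(m,n):m,n\geq 0,\ m+n\leq k\}$. Under this change, $k-l = k-m-n$, and the expression becomes
\[
\sum_{n=0}^{k}\frac{f^{(n)}(x_i)}{n!}\Biggl(\sum_{m=0}^{k-n}\frac{1}{m!}\boldsymbol{v}_{i,j;(k-n)-m}^L R^{(m)}(x_i)\Biggr).
\]
The inner parenthesis is exactly the left Jordan chain relation of the Definition in Section \ref{S:CSoJC}, evaluated at index $k-n$. Since $k-n\leq k\leq \kappa_{i,j}-1$, this index lies within the range for which the Jordan chain identity holds, so each inner sum vanishes and the full expression is zero.

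The right-multiplication statement, concerning $f(x)R(x)$ acted on the right by right Jordan vectors, follows by an identical computation: Leibniz's rule gives $R^{(m)}(x_i)$ on the right of $f^{(l-m)}(x_i)$, the same change of variables $n=l-m$ isolates the Jordan chain condition $\sum_{m=0}^{k-n}\frac{1}{m!}R^{(m)}(x_i)\boldsymbol{v}_{i,j;(k-n)-m}^R=0$, and the vanishing of each inner sum yields the identity. No genuine obstacle arises; the only care required is in keeping track of the index shift and verifying that the range $k\leq\kappa_{i,j}-1$ is preserved after the substitution, which it is.
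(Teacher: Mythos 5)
Your proof is correct and follows essentially the same route as the paper: Leibniz's rule for the derivative of the product, a change of summation indices, and recognition of the left Jordan chain identity in the inner sum, with the right-multiplication case handled symmetrically. One small notational slip: after the index change the factor $f^{(n)}(x_i)$ should remain on the \emph{right} of the inner bracket, i.e.\ $\bigl(\sum_{m=0}^{k-n}\tfrac{1}{m!}\boldsymbol{v}_{i,j;(k-n)-m}^L R^{(m)}(x_i)\bigr)f^{(n)}(x_i)$, since placing the $p\times p$ matrix $f^{(n)}(x_i)$ to the left of a row vector is dimensionally inconsistent — but this does not affect the argument, as the inner bracket vanishes by the Jordan chain relation.
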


\begin{proof}
    Differentiating $l$ times gives:
    \begin{equation*}
        \sum_{l=0}^k \frac{1}{l!} \boldsymbol{v}_{i,j;k-l}^L \sum_{m=0}^l \binom{l}{m}  R^{(m)}(x_i)f^{(l-m)}(x_i).
    \end{equation*}
    By changing the summation index as $\lambda = l-m$ and $\mu = m$, the expression becomes:
    \begin{equation*}
        \sum_{\lambda=0}^k \frac{1}{\lambda!} \left( \sum_{\mu=0}^{k-\lambda} \frac{1}{\mu!} \boldsymbol{v}_{i,j;k-\lambda-\mu}^L R^{(\mu)}(x_i) \right) f^{(\lambda)}(x_i).
    \end{equation*}
    This results in zero, as the term in parentheses represents the action of an appropriate Jordan chain on $R(x)$ and its derivatives.
\end{proof}

\begin{Proposition}
    The following limits:
    \begin{equation*}
\begin{aligned}
	        &\lim_{x \rightarrow x_i} \check{D}(x) R(x) \left( \sum_{l=0}^k \frac{1}{l!}\overleftarrow{\frac{\d^l}{\d x^l}} \boldsymbol{v}_{i,j;k-l}^R \right), & k&\in  \{ 0, \dots, \kappa_{i,j}-1 \},
\end{aligned}
    \end{equation*}
    exists and depends only on $R(x)$ and its derivatives, the canonical set of Jordan chains associated with the eigenvalue $x_i$ (both left and right), the matrix polynomial $\check{B}(x)$, and the vector functions $\boldsymbol{\xi}_{i,j,k}(x)$.
\end{Proposition}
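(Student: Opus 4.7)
The plan is first to verify existence of the limit, and then to decompose $\check D(x)R(x)$ so that the right-hand analog of Lemma~\ref{LemmaEigenvectors} isolates a contribution whose Jordan-chain pairing manifestly depends only on the listed data.

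For existence, the identity from Proposition~\ref{ConnectCD} gives
\[
\check D(x)R(x)=\Omega D(x)+\int_\Delta \check B(y)\,\d\check\mu(y)\,\frac{R(x)-R(y)}{x-y}.
\]
By Lemma~\ref{R-BivaMP} the integrand is a polynomial in $x$, so the integral is itself a matrix polynomial; meanwhile $\Omega D(x)$ is holomorphic on $\C\setminus\Delta$, hence at $x_i$. Thus $\check D(x)R(x)$ is holomorphic at $x_i$, and the Jordan chain expression in the statement is a finite linear combination of derivatives of a holomorphic function, which is well-defined.

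For the dependence, I would substitute the explicit perturbed measure directly into $\check D(x)R(x)=\int_\Delta \check B(y)(x-y)^{-1}\d\check\mu(y)\,R(x)$ and integrate by parts against the delta derivatives to split
\[
\check D(x)R(x)=G(x)R(x)+\sum_{i'=1}^M J_{i'}(x),
\]
with $G(x)=\int_\Delta \check B(y)R^{-1}(y)(x-y)^{-1}\,\d\mu(y)$ holomorphic at $x_i$ (since $\sigma(R)\cap\Delta=\emptyset$) and
\[
J_{i'}(x)=\sum_{j'',k'',l,\,p+q=l}\frac{1}{p!}\bigl(\check B\,\boldsymbol{\xi}_{i',j'',k''}\bigr)^{(p)}(x_{i'})\,\boldsymbol{v}^L_{i',j'';k''-l}\,\frac{R(x)}{(x-x_{i'})^{q+1}}.
\]
The right-hand analog of Lemma~\ref{LemmaEigenvectors},
\[
\sum_{l=0}^{k}\frac{1}{l!}\bigl[f(x)R(x)\bigr]^{(l)}_{x=x_i}\boldsymbol{v}^R_{i,j;k-l}=0\qquad\text{for $f$ holomorphic at $x_i$ and $k\le\kappa_{i,j}-1$,}
\]
follows from the same Leibniz manipulation as the stated Lemma, now using the right chain relations $\sum_n\tfrac{1}{n!}R^{(n)}(x_i)\boldsymbol{v}^R_{i,j;K-n}=0$. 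Applied to our Jordan-chain expression, this identity annihilates $G(x)R(x)$ as well as each $J_{i'}(x)$ with $i'\ne i$: in the latter case the scalar $1/(x-x_{i'})^{q+1}$ is holomorphic at $x_i$ and each summand takes the form $(\text{holomorphic})\cdot R(x)$.

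Only the pairing with $J_i(x)$ survives. Although individual summands of $J_i(x)$ carry genuine poles $(x-x_i)^{-q-1}$, the combination $J_i(x)=\check D(x)R(x)-G(x)R(x)-\sum_{i'\ne i}J_{i'}(x)$ is holomorphic at $x_i$ by the first step, so its Taylor coefficients at $x_i$ are unambiguously defined. These coefficients are built solely from $R(x)$ and its derivatives at $x_i$, from the left chain vectors $\boldsymbol{v}^L_{i,j;*}$, and from the values at $x_i$ of $\check B(y)\boldsymbol{\xi}_{i,j,k}(y)$ together with their derivatives; the pairing then contracts with the right chain vectors $\boldsymbol{v}^R_{i,j;k-l}$. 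No residual trace of $\mu$, $B$, $\Omega$, or $D$ remains, giving the claimed dependence. The main obstacle is the pole cancellation inside $J_i(x)$; rather than chasing it term by term (which would involve divergent sums), I would resolve it by the conceptual argument that global holomorphy of $\check D(x)R(x)$ forces holomorphy of $J_i(x)$.
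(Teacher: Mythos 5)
Your proposal is correct, and it follows the paper's overall strategy (substitute the explicit perturbed measure, use the Jordan-chain annihilation lemma to kill the absolutely continuous part and the delta contributions located at the other eigenvalues, and isolate the block coming from the masses at $x_i$), but it handles the two delicate points differently. First, you establish existence up front: you invoke Equation \eqref{EqConnectD} together with Lemma \ref{R-BivaMP} to conclude that $\check D(x)R(x)$ equals a matrix polynomial plus $\Omega D(x)$, hence is holomorphic at $x_i$, so the chain of derivatives is automatically well defined; the paper never uses \eqref{EqConnectD} in this proof and obtains existence only at the end of an explicit computation. Second, for the dependence claim the paper expands the surviving $x_i$-term by Leibniz, reindexes, and subtracts an identically vanishing Jordan-chain combination so as to exhibit the limit through difference quotients of the $R^{(\mu)}$, a term-by-term manipulation whose individual limits are delicate when the pole order exceeds one; you instead observe that the isolated block $J_i(x)$ must itself be holomorphic at $x_i$ (being the difference of $\check D(x)R(x)$ and pieces holomorphic there), so its Taylor coefficients --- visibly built only from $R$ and its derivatives at $x_i$, the left chain vectors, and the derivatives of $\check B\,\boldsymbol{\xi}_{i,j,k}$ at $x_i$ --- give the limit after contraction with the right chain. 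This is a cleaner and arguably more rigorous way to resolve the pole cancellation, and your right-hand analogue of Lemma \ref{LemmaEigenvectors} is exactly the version the paper tacitly uses. The only point you should make explicit is that Proposition \ref{ConnectCD} was proved in the simple-eigenvalue setting; its proof carries over verbatim to arbitrary multiplicities because it only uses $\Omega B=\check B$ and the distributional identity $\d\check\mu(x)R(x)=\d\mu(x)$, the latter being guaranteed by Lemma \ref{LemmaEigenvectors} applied to the delta part of the measure --- a remark, not a gap.
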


\begin{proof}
    Explicitly, the limit is:
    \begin{multline*}
        \lim_{x \rightarrow x_i} \left[  \int_\Delta \check{B}(t) \d \mu(t) \frac{R^{-1}(t)R(x)}{x-t} \right.\\+ \int_{-\infty}^\infty\d t\sum_{\tilde{\imath}=1;\tilde{\imath} \neq i}^M\sum_{\tilde{\jmath}=1}^{s_{\tilde{\imath}}}\sum_{\tilde{k}=0}^{\kappa_{\tilde{\imath},\tilde{\jmath}}-1} \check{B}(t)\boldsymbol{\xi}_{\tilde{\imath},\tilde{\jmath},\tilde{k}}(t) \left( \sum_{\tilde{l}=0}^{\tilde{k}}\frac{(-1)^{\tilde{l}}}{\tilde{l}!} \boldsymbol{v}_{\tilde{\imath},\tilde{\jmath};\tilde{k}-\tilde{l}}^L\frac{\delta^{(\tilde{l})}(t-x_{\tilde{\imath}})}{x-t} R(x)\right)  \\
        \left. + \int_{-\infty}^\infty\d t\sum_{\tilde{\jmath}=1}^{s_{i}}\sum_{\tilde{k}=0}^{\kappa_{i,\tilde{\jmath}}-1}\check{B}(t)\boldsymbol{\xi}_{i,\tilde{\jmath},\tilde{k}}(t) \left( \sum_{\tilde{l}=0}^{\tilde{k}}\frac{(-1)^{\tilde{l}}}{\tilde{l}!} \boldsymbol{v}_{i,\tilde{\jmath};\tilde{k}-\tilde{l}}^L\frac{\delta^{(\tilde{l})}(t-x_i)}{x-t}R(x)\right)\right] \left( \sum_{l=0}^k \frac{1}{l!}\overleftarrow{\frac{\d^l}{\d x^l}} \boldsymbol{v}_{i,j;k-l}^R \right).
    \end{multline*}
    The first  integral term, as well as the summation term for $i \neq \tilde{\imath}$, vanishes because we have a holomorphic function at $x=x_i$, and we can apply Lemma \ref{LemmaEigenvectors}. The non-zero term simplifies as:
    \begin{multline*}
        \lim_{x \rightarrow x_i} \left(\sum_{\tilde{\jmath}=1}^{s_{i}}\sum_{\tilde{k}=0}^{\kappa_{i,\tilde{\jmath}}-1} \sum_{\tilde{l}=0}^{\tilde{k}}\sum_{m=0}^{\tilde{l}} \frac{1}{(\tilde{l}-m)!}\left[\check{B}(t)\boldsymbol{\xi}_{i,\tilde{\jmath},\tilde{k}}(t)\right]^{(\tilde{l}-m)}_{t=x_i}  \boldsymbol{v}_{i,\tilde{\jmath};\tilde{k}-\tilde{l}}^L\frac{R(x)}{(x-x_i)^{m+1}}\right) \left( \sum_{l=0}^k \frac{1}{l!}\overleftarrow{\frac{\d^l}{\d x^l} }\boldsymbol{v}_{i,j;k-l}^R \right).
    \end{multline*}
    Using the derivative product formula, the limit becomes:
    \begin{multline*}
        \lim_{x \rightarrow x_i} \left(\sum_{\tilde{\jmath},\tilde{k},\tilde{l},m} \sum_{l=0}^k \sum_{\lambda=0}^l \frac{1}{\lambda!(l-\lambda)!}\frac{(-1)^{\lambda}(m+\lambda)!}{m!(\tilde{l}-m)!}\left[\check{B}(t)\boldsymbol{\xi}_{i,\tilde{\jmath},\tilde{k}}(t)\right]^{(\tilde{l}-m)}_{t=x_i}  \boldsymbol{v}_{i,\tilde{\jmath};\tilde{k}-\tilde{l}}^L\frac{R^{(l-\lambda)}(x)}{(x-x_i)^{m+\lambda+1}}\boldsymbol{v}_{i,j;k-l}^R \right). 
    \end{multline*}
    Adjusting the summations on $l$ and $\lambda$ yields:
    \begin{multline*}
        \lim_{x \rightarrow x_i} \left(\sum_{\tilde{\jmath},\tilde{k},\tilde{l},m}  \sum_{\lambda=0}^k \sum_{\mu=0}^{k-\lambda} \frac{(-1)^{\lambda}(m+\lambda)!}{\lambda!m!(\tilde{l}-m)!}\left[\check{B}(t)\boldsymbol{\xi}_{i,\tilde{\jmath},\tilde{k}}(t)\right]^{(\tilde{l}-m)}_{t=x_i}  \boldsymbol{v}_{i,\tilde{\jmath};\tilde{k}-\tilde{l}}^L\frac{R^{(\mu)}(x)}{(x-x_i)^{m+\lambda+1}} \frac{1}{\mu ! }\boldsymbol{v}_{i,j;k-\lambda+\mu}^R \right).
    \end{multline*}
    By subtracting the following combination:
    \begin{equation*}
        \sum_{\tilde{\jmath},\tilde{k},\tilde{l},m,\lambda,\mu} \frac{(-1)^{\lambda}(m+\lambda)!}{\lambda!m!(\tilde{l}-m)!}\left[\check{B}(t)\boldsymbol{\xi}_{i,\tilde{\jmath},\tilde{k}}(t)\right]^{(\tilde{l}-m)}_{t=x_i}  \boldsymbol{v}_{i,\tilde{\jmath};\tilde{k}-\tilde{l}}^L\frac{1}{(x-x_i)^{m+\lambda+1}} \frac{1}{\mu ! }R^{(\mu)}(x_i)\boldsymbol{v}_{i,j;k-\lambda+\mu}^R,
    \end{equation*}
    which is always zero, the limit simplifies to:
    \begin{multline*}
         \sum_{\tilde{\jmath},\tilde{k},\tilde{l},m,\lambda,\mu} \frac{(-1)^{\lambda}(m+\lambda)!}{\lambda!\mu!m!(\tilde{l}-m)!}\left[\check{B}(t)\boldsymbol{\xi}_{i,\tilde{\jmath},\tilde{k}}(t)\right]^{(\tilde{l}-m)}_{t=x_i}  \lim_{x \rightarrow x_i} \left( \boldsymbol{v}_{i,\tilde{\jmath};\tilde{k}-\tilde{l}}^L\frac{R^{(\mu)}(x)-R^{(\mu)}(x_i)}{(x-x_i)^{m+\lambda+1}} \boldsymbol{v}_{i,j;k-\lambda+\mu}^R \right),
    \end{multline*}
    which only depends on the derivatives of $R(x)$, and the limit exists.
\end{proof}
In this section, we introduce additional notation to extend the Christoffel-type formulas to a more general setting. 

\begin{Definition}
We define the following expressions:
\begin{align*}
    \mathbb{D}_{n,j}^{(i)}& \coloneq \begin{bNiceMatrix}
        D_n(x_i)\boldsymbol{v}^R_{i,j;0} & \Cdots & \displaystyle\sum_{l=0}^{\kappa_{i,j}-1} \frac{1}{l!} D^{(l)}_n(x_i)\boldsymbol{v}^R_{i,j;\kappa_{i,j}-1-l}
    \end{bNiceMatrix}, \\
    \mathbb{W}_{n,j}^{(i)}& \coloneq \begin{multlined}[t][.75\textwidth]
    	\lim_{x \rightarrow x_i}
    \int_{-\infty}^\infty\d t	 \sum_{\tilde{\jmath}=1}^{s_i}\sum_{\tilde{k}=0}^{\kappa_{i,\tilde{\jmath}}-1}\sum_{\tilde{l}=0}^{\tilde{k}} \frac{(-1)^{\tilde{l}}}{\tilde{l}!} B(t)\boldsymbol{\xi}_{i,\tilde{\jmath},\tilde{k}}(t)  \boldsymbol{v}_{i,\tilde{\jmath};\tilde{k}-\tilde{l}}^L \frac{\delta^{(\tilde{l})}(t-x_i)R(x)}{(x-t)}\\
    \times	\begin{bNiceMatrix}
        \boldsymbol{v}^R_{i,j,0} & \Cdots &\displaystyle \sum_{l=0}^{\kappa_{i,j}-1} \frac{1}{l!} \overleftarrow{\dfrac{\d^l}{\d x^l}} \boldsymbol{v}^R_{i,j,\kappa_{i,j}-1-l}
    \end{bNiceMatrix}, 
    \end{multlined}
\end{align*}
both of which are row vectors of length $\kappa_{i,j}$. Additionally, we define the vector:
\begin{equation*}
    \mathbb{D}_n^{(i)}-\mathbb{W}_n^{(i)}\coloneq \begin{bNiceMatrix}
        \mathbb{D}_{n,1}^{(i)}-\mathbb{W}_{n,1}^{(i)}& \Cdots & \mathbb{D}_{n,s_i}^{(i)}-\mathbb{W}_{n,s_i}^{(i)}
    \end{bNiceMatrix},
\end{equation*}
which has length $K_i$ (where $K_i$ represents the multiplicity of $x=x_i$ as a zero of $\det R(x)$).
\end{Definition}

\begin{Definition}
We also introduce the vector:
\begin{equation*}
    \mathbb{K}^{[n-1],(i)}(x) \coloneq \sum_{r=0}^{n-1}A_r(x) \left( \mathbb{D}_r^{(i)}- \mathbb{W}_r^{(i)} \right),
\end{equation*}
and the following notations:
\begin{align*}
    \mathbb{P}_j^{(i)}(x) & \coloneq \begin{bNiceMatrix}
        \dfrac{\boldsymbol{v}_{i,j;0}}{x-x_i} & \Cdots & \displaystyle\sum_{l=0}^{\kappa_{i,j}-1} \frac{1}{l!} \frac{\d^l}{\d x^l} \left( \frac{1}{x-x_i} \right) \boldsymbol{v}^R_{i,j;\kappa_{i,j}-1-l}
    \end{bNiceMatrix}, \\
    \mathbb{P}^{(i)}(x) & \coloneq \begin{bNiceMatrix}
        \mathbb{P}_1^{(1)}(x) & \Cdots & \mathbb{P}_{s_i}^{(i)}(x)
    \end{bNiceMatrix}.
\end{align*}
Here, $\mathbb{P}_j^{(i)}(x)$ is of length $\kappa_{i,j}$, whereas $\mathbb{K}^{[n-1],(i)}(x)$ and $\mathbb{P}(x,x_i)$ have length $K_i$.
\end{Definition}

We are now in a position to present the Christoffel-type formulas for eigenvalues of arbitrary multiplicity. A noteworthy distinction from the simple eigenvalue case is that the number of unknowns is $Np - r$, which differs from $M$, the number of distinct roots of $\det R(x)$.

\begin{Definition}
Within this framework, the $\tau$-determinants are expressed as:
\begin{equation*}
    \tau_{n} \coloneq \begin{vNiceMatrix}
        \mathbb{D}_{n-Np+r+1}^{(1)}- \mathbb{W}_{n-Np+r+1}^{(1)}& \Cdots & \mathbb{D}_{n-Np+r+1}^{(M)}- \mathbb{W}_{n-Np+r+1}^{(M)}\\
        \Vdots & & \Vdots \\
        \mathbb{D}_{n}^{(1)}- \mathbb{W}_{n}^{(1)}& \Cdots & \mathbb{D}_{n}^{(M)}- \mathbb{W}_{n}^{(M)}
    \end{vNiceMatrix},
\end{equation*}
with $n \in \{ Np+r-1, Np+r, Np+r+1, \dots  \} $.
\end{Definition}

\begin{Theorem}
For a matrix polynomial satisfying the conditions outlined in Section \ref{CondicionesMatricesLideresFinales}, the relationship between the perturbed and original orthogonal polynomials for a Geronimus perturbation is given by the following formulas for $n \geq Np-r$:
\begin{align}
    \label{EqCheckA-AM} \check{A}^{(a)}_n(x) &= \sum_{\tilde{a}=1}^p \frac{R_{a,\tilde{a}}(x)}{\tau_n} \begin{vNiceMatrix}
        \mathbb{D}_{n-Np+r+1}^{(1)}- \mathbb{W}_{n-Np+r+1}^{(1)}& \Cdots & \mathbb{D}_{n-Np+r+1}^{(M)}- \mathbb{W}_{n-Np+r+1}^{(M)}\\
        \Vdots & & \Vdots \\
        \mathbb{D}_{n-1}^{(1)}- \mathbb{W}_{n-1}^{(1)}& \Cdots & \mathbb{D}_{n-1}^{(M)}- \mathbb{W}_{n-1}^{(M)}\\[4pt]
        \mathbb{K}_{\tilde{a}}^{[n-M],(1)}(x) + \mathbb{P}_{\tilde{a}}^{(1)}(x) & \Cdots & \mathbb{K}_{\tilde{a}}^{[n-M],(M)}(x) + \mathbb{P}_{\tilde{a}}^{(M)}(x)
    \end{vNiceMatrix}, \\
    \label{EqCheckB-AM} \check{B}^{(b)}_n(x) &= \frac{1}{\tau_{n-1}} \begin{vNiceMatrix}
        \mathbb{D}_{n-Np+r}^{(1)}- \mathbb{W}_{n-Np+r}^{(1)}& \Cdots & \mathbb{D}_{n-Np+r}^{(M)}- \mathbb{W}_{n-Np+r}^{(M)}& B_{n-Np+r}^{(b)}(x) \\
        \Vdots & & \Vdots & \Vdots \\
        \mathbb{D}_{n-1}^{(1)}- \mathbb{W}_{n-1}^{(1)}& \Cdots & \mathbb{D}_{n-1}^{(M)}- \mathbb{W}_{n-1}^{(M)}& B_{n-1}^{(b)}(x) \\[4pt]
        \mathbb{D}_{n}^{(1)}- \mathbb{W}_{n}^{(1)}& \Cdots & \mathbb{D}_{n}^{(M)}- \mathbb{W}_{n}^{(M)}& B_n^{(b)}(x)
    \end{vNiceMatrix}.
\end{align}
\end{Theorem}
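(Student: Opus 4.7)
The plan is to follow the strategy of the simple-eigenvalue Theorem \ref{ExplicitFormulas}, but replacing pointwise evaluations at the eigenvalues by the full Jordan-chain action guaranteed by Lemma \ref{LemmaEigenvectors} and the limit proposition preceding the statement. First, I would rederive the analogue of Proposition \ref{OmegaComponents} in this broader setting. Starting from Proposition \ref{ConnectCD}, namely
\[
\check{D}(x)R(x)=\Omega D(x)+\int_\Delta \check{B}(y)\,\d\check\mu(y)\,\frac{R(x)-R(y)}{x-y},
\]
I would apply, for each eigenvalue $x_i$, each Jordan block $j\in\{1,\dots,s_i\}$ and each order $k\in\{0,\dots,\kappa_{i,j}-1\}$, the differential operator $\sum_{l=0}^{k}\tfrac{1}{l!}\overleftarrow{\tfrac{\d^{l}}{\d x^{l}}}\boldsymbol{v}^R_{i,j;k-l}$, and take the limit $x\to x_i$. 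By Lemma \ref{LemmaEigenvectors}, every contribution containing an analytic factor times $R(x)$ acted upon on the right by a Jordan chain vanishes; in particular, the left-hand side collapses, the bivariate-polynomial integral term (Lemma \ref{R-BivaMP}) disappears entry-wise by the very same degree argument used in Corollary \ref{ProyectorK-2} (valid now for $n\geq Np+p-1-r$ because of condition \eqref{CondicionesMatricesLideresFinales}), and the right-hand side yields precisely the column vectors $\mathbb{D}^{(i)}_{n,j}-\mathbb{W}^{(i)}_{n,j}$ introduced in the section. Running over all $i,j,k$ produces a homogeneous-inhomogeneous linear system of exactly $Np-r$ equations for the $Np-r$ unknowns $\Omega_{n,n-(Np-r)},\dots,\Omega_{n,n-1}$.

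Second, I would invert this system by Cramer's rule. Writing the coefficient matrix as the $\tau$-block in the definition immediately above the statement, its determinant is $\tau_{n-1}$, which is nonzero by the direct analogue of Theorem \ref{ExistsTheorem} (whose proof transports verbatim once one integrates along contours enclosing each $x_i$ and uses the residue computation together with linear independence of the top vectors $\boldsymbol{v}^R_{i,j;0}$ in each canonical set of Jordan chains). This yields
\[
\Omega_{n,n-(Np-r)}=(-1)^{Np-r}\frac{\tau_n}{\tau_{n-1}},\qquad \Omega_{n,n-i}=(-1)^{i}\frac{\tau_n^{(i)}}{\tau_{n-1}},
\]
where $\tau_n^{(i)}$ denotes the appropriate cofactor-style determinant. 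Substituting these into the entrywise expansion $\check{B}^{(b)}_n(x)=B^{(b)}_n(x)+\sum_{k=1}^{Np-r}\Omega_{n,n-k}B^{(b)}_{n-k}(x)$ coming from Proposition \ref{ConnectAB} and recognising the cofactor expansion along the last column gives \eqref{EqCheckB-AM}.

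Third, for \eqref{EqCheckA-AM} I would mimic Proposition \ref{Futuro Tau}, but applied to the Jordan-chain differential operators instead of single-point evaluations. Starting from Proposition \ref{ConnectKernelmix}, apply to $y$ the same Jordan-chain operator and let $y\to x_i$. The same vanishing mechanism as above leaves, on the left, $R(x)$ multiplied by the vector $\mathbb{K}^{[n-1],(i)}(x)+\mathbb{P}^{(i)}(x)$ (the $\mathbb{P}^{(i)}$-term being produced by differentiating $\frac{1}{x-y}$, exactly the content of the definitions of $\mathbb{P}^{(i)}_j$), and on the right produces $\mathscr{A}$-blocks multiplied by $\Omega$-sub-blocks multiplied by the $\mathbb{D}-\mathbb{W}$ coefficient matrix. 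Collecting contributions over $i,j,k$ and right-multiplying by the inverse of this coefficient matrix, then projecting on the first unit column, I can isolate $\Omega_{n,n-(Np-r)}\check{A}^{(a)}_n(x)$; substituting the value of $\Omega_{n,n-(Np-r)}$ obtained in the second step yields \eqref{EqCheckA-AM} after the same last-row cofactor expansion argument used in the simple-eigenvalue proof.

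The main obstacle is bookkeeping rather than conceptual: one must verify that the limit computation in the proposition above truly furnishes the columns $\mathbb{D}^{(i)}_{n,j}-\mathbb{W}^{(i)}_{n,j}$ as displayed, and that the mixed cross-terms (derivatives of $\frac{1}{x-y}$ against derivatives of $R$ and of $\check{B}\boldsymbol{\xi}$) reassemble cleanly into $\mathbb{P}^{(i)}_j(x)$ plus an analytic remainder killed by Lemma \ref{LemmaEigenvectors}. Once this bookkeeping is set up, the algebraic skeleton is identical to the simple case, and the $\tau$-determinantal formulas follow verbatim.
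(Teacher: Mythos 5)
Your proposal follows exactly the route the paper takes: it transports the simple-eigenvalue machinery (Proposition \ref{OmegaComponents}, Proposition \ref{Futuro Tau}, Theorem \ref{ExplicitFormulas}) to the arbitrary-multiplicity setting by replacing point evaluations with the Jordan-chain operators, using Lemma \ref{LemmaEigenvectors} and the preceding limit proposition to kill the analytic terms, and identifying the derivatives of $\frac{1}{x-y}$ with the vectors $\mathbb{P}^{(i)}_j(x)$ — which is precisely the key point the paper's own (much briefer) proof highlights. The Cramer's-rule/cofactor bookkeeping and the substitution into the connection formula for $\check{B}$ coincide with the paper's argument, so the proposal is correct and essentially identical in approach.
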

\begin{proof}
    We will highlight the key distinction between these formulas and those previously established (Equation \eqref{EqCheckA} and \eqref{EqCheckB}), particularly focusing on Equation \eqref{EqCheckA-AM}. In the proof of Proposition \ref{Futuro Tau}, when dealing with different Jordan chains, it becomes necessary to consider the term
    \begin{equation*}
        \frac{R(x)-R(y)}{x-y}\left( \sum_{l=0}^k\frac{1}{l!} \overleftarrow{\frac{\d^l}{\d y^l}} \boldsymbol{v}^R_{i,j;k-1-l} \right)_{y=x_i},
    \end{equation*}
    which, by applying Lemma \ref{LemmaEigenvectors}, can be reformulated as:
    \begin{equation*}
        R(x)\left( \sum_{l=0}^k\frac{1}{l!} \frac{\d^l}{\d y^l}\left[ \frac{1}{x-y} \right]_{y=x_i} \boldsymbol{v}^R_{i,j;k-1-l} \right).
    \end{equation*}
    This expression is simply \( R(x) \) multiplied by the \( k \)-th component of the vector \( \mathbb{P}_j(x,x_i) \). Hence, the formulas generalize from 
    \begin{equation*}
        \mathbb{K}^{[n-1],(i)}(x) + \frac{\boldsymbol{v}_{i}}{x-x_i}, 
    \end{equation*}
    in the simple eigenvalue case, to 
    \begin{equation*}
        \mathbb{K}^{[n-1],(i)}(x) + \mathbb{P}^{(i)}(x),
    \end{equation*}
    in the more general case. 
\end{proof}

\begin{Remark}
    Let us now extend the proof of Theorem \ref{ExistsTheorem} to the present context. Proposition \ref{Futuro Tau} can be restated as follows:
    \begin{multline*}  
    	R(x)  \begin{bNiceMatrix}
    	\mathbb{K}^{[n-1],(1)}(x) + \mathbb{P}^{(1)}(x) & \Cdots & \mathbb{K}^{[n-1],(M)}(x) + \mathbb{P}^{(M)}(x)
    	\end{bNiceMatrix} \\ = \begin{bNiceMatrix}
    	    \check{A}_n^{(1)}(x)  & \Cdots & \check{A}_{n+Np-r-1}^{(1)}(x) \\
    	    \Vdots & & \Vdots \\
    	    \check{A}_n^{(p)}(x) & \Cdots & \check{A}_{n+Np-r-1}^{(p)}(x)
    	\end{bNiceMatrix} \begin{bNiceMatrix}
    	    \Omega_{n,n-Np+r} & \Cdots & & \Omega_{n,n-1} \\
    	    0 & & &   \\
    	    \Vdots & \Ddots & \Ddots[shorten-end=-8pt] & \Vdots \\
    	    0 & \Cdots & 0 & \Omega_{n+Np-r-1,n-1}
    	\end{bNiceMatrix} \\\times\begin{bNiceMatrix}
    	\mathbb{D}_{n-Np+r}^{(1)}- \mathbb{W}_{n-Np+r}^{(1)}& \Cdots & \mathbb{D}_{n-Np+r}^{(M)}- \mathbb{W}_{n-Np+r}^{(M)}\\
    	    \Vdots & & \Vdots \\
    	    \mathbb{D}_{n-1}^{(1)}- \mathbb{W}_{n-1}^{(1)}& \Cdots & \mathbb{D}_{n-1}^{(M)}- \mathbb{W}_{n-1}^{(M)}
    	\end{bNiceMatrix}.
    \end{multline*}
    Assuming that orthogonality holds even when \( \tau_{n-1} = 0 \), there exists a vector \( \boldsymbol{c} \) of size \( Np-r \), such that by studying points where \( \det R(x) \neq 0 \):
    \begin{equation*}
        \begin{bNiceMatrix}
    	\mathbb{K}^{[n-1],(1)}(x) + \mathbb{P}^{(1)}(x) & \Cdots & \mathbb{K}^{[n-1],(i)}(x) + \mathbb{P}^{(i)}(x) & \Cdots & \mathbb{K}^{[n-1],(M)}(x) + \mathbb{P}^{(M)}(x)
    	\end{bNiceMatrix} \boldsymbol{c} = 0.
    \end{equation*}
    Let us now examine \( \mathbb{P}^{(i)}(x) \) in closer detail. The vector contains derivatives of \( \frac{1}{x-y} \) evaluated at \( y = x_i \), which are poles whose multiplicity depends on the order of the derivative. Specifically, we have:
    \begin{equation*}
        \mathbb{P}^{(i)}(x) = \begin{bNiceMatrix}
            \frac{\boldsymbol{v}_{i,1;0}^R}{x-x_i} & \Cdots & \sum_{l=0}^{\kappa_{i,1}-2} \frac{\boldsymbol{v}_{i,1;\kappa_{i,1}-2-l}^R}{(x-x_i)^{l+1}} & \sum_{l=0}^{\kappa_{i,1}-1} \frac{\boldsymbol{v}_{i,1;\kappa_{i,1}-1-l}^R}{(x-x_i)^{l+1}} & \Cdots &\frac{\boldsymbol{v}_{i,s_i;0}^R}{x-x_i} & \Cdots & \sum_{l=0}^{\kappa_{i,s_i}-1} \frac{\boldsymbol{v}_{i,s_i;\kappa_{i,s_i}-1-l}^R}{(x-x_i)^{l+1}}
        \end{bNiceMatrix}.
    \end{equation*}
    Multiplying the expression by \( (x-x_i)^{\kappa_{i,j}-1} \), where \( \kappa_{i,j} \) is the largest partial multiplicity, we integrate the relation over the complex plane using a counterclockwise contour \( C_i \) around \( x_i \) (and not any other eigenvalue):
    \begin{equation*}
        \oint_{C_i} (z-x_i)^{\kappa_{i,j}-1} \begin{bNiceMatrix}
			\mathbb{K}^{[n-1],(1)}(z) + \mathbb{P}^{(1)}(z) & \Cdots & \mathbb{K}^{[n-1],(M)}(z) + \mathbb{P}^{(M)}(z)
		\end{bNiceMatrix} \boldsymbol{c} \: \d z.
    \end{equation*}
    The only nonzero terms will correspond to those involving poles of multiplicity \( \kappa_{i,j} \), which are associated with Jordan chains of length \( \kappa_{i,j} \). Without loss of generality, assume that the partial multiplicities for \( j \in \{ 1, \dots, s_i \} \) are the same. Then:
    \begin{equation*}
        2\pi \ii\begin{bNiceMatrix}
			0 & \Cdots & 0 & \boldsymbol{v}_{i,1;0}^R & 0 &\Cdots & 0 & \boldsymbol{v}_{i,s_i;0}^R & 0 & \Cdots[shorten-end=8pt] 
		\end{bNiceMatrix} \boldsymbol{c} = 0. 
    \end{equation*}
    The nonzero entries correspond to positions \( \kappa_{i,1}, \dots, \kappa_{i,s_i} \), we obtain:
    \begin{equation*}
        \boldsymbol{v}_{i,1;0}^R c_{\kappa_{i,1}} + \boldsymbol{v}_{i,2;0}^R c_{\kappa_{i,2}} + \dots + \boldsymbol{v}_{i,s_i;0}^R c_{\kappa_{i,s_i}} = 0.
    \end{equation*}
    However, since the vectors \( \boldsymbol{v}_{i,j;0}^R \) are linearly independent, it follows that the only possible conclusion is \( c_{\kappa_{i,j}} = 0 \). We then multiply the preceding expression by \( (x-x_i)^{\kappa_{i,j}-2} \). The terms that exhibit a pole of multiplicity \( \geq \kappa_{i,j}-1 \) are precisely those for which we have already established that \( c_i = 0 \), along with the terms immediately preceding them. At this juncture, we could also consider the Jordan chains associated with a partial multiplicity equal to \( \kappa_{i,j}-1 \), and the deduction would remain valid. Furthermore, the poles of multiplicity \( \kappa_{i,j}-1 \) will similarly correspond to the associated eigenvectors \( \boldsymbol{v}_{i,j;0}^R \), and the same reasoning applies as previously discussed. 

    Consequently, we would systematically deduce that all entries of \( \boldsymbol{c} \) that are multiplied by the vector \( \mathbb{K}^{[n-1],(i)}(z) + \mathbb{P}^{(i)}(x) \) must be zero. Extending this conclusion for \( i \in\{1, \dots, M\} \), we ultimately arrive at the result \( \boldsymbol{c} = 0 \). Thus, we encounter a contradiction once again.

\end{Remark}
\subsection{Left Multiplication}
We previously described Geronimus perturbations as a right multiplication of the perturbed measure. Here, we generalize the result by considering the case of a left multiplication, as follows:
\begin{equation*}
    L(x) \, \mathrm{d} \check{\mu}(x) = \mathrm{d} \mu(x),
\end{equation*}
where $L(x)$ is a matrix polynomial of degree $N$ given by
\[\begin{aligned}
    L(x) &= \sum_{l=0}^N L_l x^l, & L_l \in \mathbb{C}^{q \times q},
\end{aligned}\]
with its leading and subleading coefficients given by
\[\begin{aligned}
    L_{N} &= \begin{bmatrix}
        0_{r \times (q-r)} & 0_{r} \\
        I_{q-r} & 0_{(q-r) \times r}
    \end{bmatrix}, & 
    L_{N-1} &= \begin{bmatrix}
        \left[L^1_{N-1}\right]_{r \times (q-r)} & I_r \\
        \left[L^3_{N-1}\right]_{(q-r)} & \left[L^4_{N-1}\right]_{(q-r) \times r}
    \end{bmatrix}.
\end{aligned}\]
The determinantal polynomial has degree $M = Nq - r$, and it is required that $\sigma(L(x)) \cap \Delta = \emptyset$. For simplicity, we assume that $L(x)$ has simple eigenvalues, though this assumption can be generalized as discussed in \S \ref{GeneralSpectra}.

\begin{Proposition}
    The perturbed matrix of measures is expressed in terms of the original measure and the matrix polynomial as
    \begin{equation*}
        \mathrm{d} \check{\mu}(x) = L^{-1}(x) \, \mathrm{d} \mu(x) + \sum_{i=1}^M \boldsymbol{v}_i^R \boldsymbol{\xi}_i(x) \delta(x - x_i),
    \end{equation*}
    where $\delta$ denotes Dirac’s delta distribution, $\boldsymbol{v}_i^R$ are the right eigenvectors of $L(x)$ associated with simple eigenvalues, and $\boldsymbol{\xi}_i(x)$ is an arbitrary row vector function of size $p$.
\end{Proposition}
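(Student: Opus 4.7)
The plan is to mirror the argument that produced \eqref{eq:perturbed measure} in the right-multiplication case, with the roles of left/right eigenvectors and of column/row vector functions interchanged. One first exhibits a particular solution of $L(x)\,\d\check{\mu}(x)=\d\mu(x)$ and then parametrises the full family by characterising the kernel of left multiplication by $L(x)$ on $q\times p$ matrix-valued distributions.

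\textbf{Particular solution.} Since $\sigma(L)\cap\Delta=\emptyset$, the matrix function $L^{-1}(x)$ is holomorphic in a neighbourhood of $\Delta$, so $L^{-1}(x)\,\d\mu(x)$ defines a regular $q\times p$ matrix of measures supported on $\Delta$, and $L(x)\bigl[L^{-1}(x)\,\d\mu(x)\bigr]=\d\mu(x)$ trivially.

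\textbf{Classification of the homogeneous part.} Any other solution differs from this particular one by a $q\times p$ matrix distribution $\d\nu$ satisfying $L(x)\,\d\nu(x)=0$. Multiplying on the left by the adjugate $\operatorname{adj} L(x)$ yields $\det L(x)\cdot\d\nu(x)=0$, so the support of $\d\nu$ must lie in the zero set of $\det L(x)$, which is precisely $\sigma(L)=\{x_1,\dots,x_M\}$. Hence $\d\nu=\sum_{i=1}^{M}\d\nu_i$, with each $\d\nu_i$ a matrix distribution supported at the single point $\{x_i\}$. Such a $\d\nu_i$ is necessarily a finite sum $\d\nu_i=\sum_{k\geq 0}U_k^{(i)}\,\delta^{(k)}(x-x_i)$ with $U_k^{(i)}\in\mathbb{C}^{q\times p}$.

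\textbf{Local analysis at each simple eigenvalue.} Substituting the expansion of $\d\nu_i$ into $L(x)\,\d\nu_i(x)=0$, expanding $L(x)$ in a Taylor series at $x_i$, and using the distributional identity $(x-x_i)\delta^{(k)}(x-x_i)=-k\,\delta^{(k-1)}(x-x_i)$ to collect coefficients of each $\delta^{(k)}(x-x_i)$, one obtains a triangular system on the $U_k^{(i)}$ that is, column by column, precisely the right Jordan chain equations of $L$ at $x_i$. Because $x_i$ is simple, every Jordan chain has length one, so $U_k^{(i)}=0$ for $k\geq 1$ and $L(x_i)\,U_0^{(i)}=0$. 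The kernel of $L(x_i)$ being one-dimensional and spanned by $\boldsymbol{v}_i^R$, we may write $U_0^{(i)}=\boldsymbol{v}_i^R\,\boldsymbol{\eta}_i$ for some row vector $\boldsymbol{\eta}_i\in\mathbb{C}^{1\times p}$; renaming $\boldsymbol{\eta}_i=\boldsymbol{\xi}_i(x_i)$ as the value at $x_i$ of an arbitrary row-vector function $\boldsymbol{\xi}_i(x)$ of size $p$ (only this value enters, due to $\delta(x-x_i)$) and summing over $i$, together with the particular solution, yields the stated formula.

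The main technical point is ruling out higher-order delta derivatives; this reduces to identifying the coefficient equations with the Jordan chain condition and invoking simplicity of the spectrum. Once this is done the remaining steps are a verbatim transposition of the right-multiplication derivation, with $\boldsymbol{v}_i^L\leftrightarrow\boldsymbol{v}_i^R$ and the size-$q$ column-vector freedom replaced by a size-$p$ row-vector freedom.
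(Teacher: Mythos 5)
Your proposal is correct, but it does not follow the paper, because the paper contains no proof of this proposition: the left-multiplication subsection explicitly presents its results without detailed proofs, appealing to analogy with the right-multiplication case, and even the right-hand analogue \eqref{eq:perturbed measure} is merely asserted there. What you supply is the missing classification argument: the particular solution $L^{-1}(x)\,\d\mu(x)$ (legitimate since $\sigma(L)\cap\Delta=\emptyset$), the adjugate identity $\det L(x)\,\d\nu(x)=0$ forcing any homogeneous solution to be supported on $\sigma(L)$, the structure theorem for distributions supported at a point, and the local triangular system whose solvability obstruction at a simple root of $\det L$ (equivalently $\boldsymbol{v}_i^{L}L'(x_i)\boldsymbol{v}_i^{R}\neq 0$, i.e.\ all Jordan chains have length one because the partial multiplicities sum to $K_i=1$) eliminates every $\delta^{(k)}(x-x_i)$ with $k\geq 1$ and leaves $U_0^{(i)}=\boldsymbol{v}_i^{R}\boldsymbol{\xi}_i(x_i)$. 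Your route buys a self-contained justification of the claim that the masses exhaust the freedom in the Geronimus inverse, and it simultaneously explains why derivative-of-delta terms must appear once eigenvalues have higher multiplicity, in agreement with the ansatz used in \S\ref{GeneralSpectra}; the paper instead relies on the reader transposing the standard Geronimus picture from the right-multiplication case. One minor remark: if $\check{\mu}$ is insisted upon as a genuine matrix of measures rather than a distribution, the terms $\delta^{(k)}$, $k\geq 1$, are excluded a priori and your local analysis collapses to $L(x_i)U_0^{(i)}=0$; your distributional version is the stronger statement and is the one consistent with the paper's later use of Jordan chains.
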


We present the results without detailed proofs, as the calculations are analogous to the case of right multiplication. To facilitate our analysis, we normalize the expressions as follows:
\[\begin{aligned}
    A(x) &= X_{[p]}^\top(x) \Bar{S}^\top, & B(x) &= H^{-1} S X_{[q]}(x),
\end{aligned}\]
where $A(x)$ is now monic.

\begin{Proposition}
    The connection matrix $\Omega$ can be expressed in two equivalent forms:
    \begin{equation*}
        \Omega = H^{-1} S L(\Lambda_{[q]}) \check{S}^{-1} \check{H} = \Bar{S}^{-\top} \check{\Bar{S}}^\top.
    \end{equation*}
    The connection matrix is upper unitriangular, with at most $M$ nonzero upper diagonals.
\end{Proposition}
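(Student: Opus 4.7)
The plan is to mirror the right-multiplication argument (Propositions \ref{ConnectM} and \ref{MatrixConnect}) with sides swapped, using the right normalization of the $LU$ factorization, in which now $\bar S^\top$ and $\check{\bar S}^\top$ are upper unitriangular (so that $A(x)=X_{[p]}^\top(x)\bar S^\top$ is monic). First I would derive the moment-matrix identity $L(\Lambda_{[q]})\check{\mathscr M}=\mathscr M$. Starting from $L(x)\d\check\mu(x)=\d\mu(x)$ and using $\Lambda_{[q]}X_{[q]}(x)=xX_{[q]}(x)$, a one-line computation
\[
\mathscr M=\int X_{[q]}(x)L(x)\d\check\mu(x)X_{[p]}^\top(x)=L(\Lambda_{[q]})\int X_{[q]}(x)\d\check\mu(x)X_{[p]}^\top(x)=L(\Lambda_{[q]})\check{\mathscr M}
\]
gives the required identity, on treating $\C^{\infty\times p}[x]$ as a $\C^{q\times q}[x]$-bimodule exactly as in Proposition \ref{ConnectM}.

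Next I would substitute the $LU$ factorizations $\mathscr M=S^{-1}H\bar S^{-\top}$ and $\check{\mathscr M}=\check S^{-1}\check H\check{\bar S}^{-\top}$ into this identity, yielding
\[
L(\Lambda_{[q]})\check S^{-1}\check H\check{\bar S}^{-\top}=S^{-1}H\bar S^{-\top}.
\]
Left-multiplying by $H^{-1}S$ and right-multiplying by $\check{\bar S}^\top$ separates the lower-triangular and upper-triangular pieces, producing the key equality
\[
H^{-1}SL(\Lambda_{[q]})\check S^{-1}\check H=\bar S^{-\top}\check{\bar S}^\top,
\]
which is the two-sided expression defining $\Omega$. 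Upper unitriangularity then reads off the right-hand factorization: $\bar S^{-\top}$ and $\check{\bar S}^\top$ are both upper unitriangular (since $\bar S$ and $\check{\bar S}$ are lower unitriangular in the right normalization), and their product inherits this property.

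For the band structure I would establish the left-multiplication analog of the statement that $R(\Lambda_{[p]}^\top)$ has at most $Np-r$ nonzero subdiagonals. A block-wise computation shows that $L(\Lambda_{[q]})$ has $L_l$ on its $l$-th block superdiagonal ($l=0,\dots,N$), and the specific sparsity of $L_N$ (only the $I_{q-r}$ block at the bottom-left of its $q\times q$ block) together with that of $L_{N-1}$ (carrying the $I_r$ block at the top-right) together populate precisely the scalar superdiagonal numbered $Nq-r=M$ and no higher; the remaining $L_l$, $l\le N-2$, reach only superdiagonals $\le(N-1)q+(q-1)\le M-1$. Thus $L(\Lambda_{[q]})$ is upper banded with at most $M$ nonzero superdiagonals. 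Finally, since $H^{-1}S$ and $\check S^{-1}\check H$ are lower triangular, a direct entrywise estimate shows that sandwiching $L(\Lambda_{[q]})$ between them cannot enlarge the upper bandwidth, so $\Omega$ has at most $M$ nonzero superdiagonals as required.

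The main obstacle is the scalar-level bookkeeping in the band analysis of $L(\Lambda_{[q]})$: one must partition $L_N$ and $L_{N-1}$ according to the parameter $r$ and track exactly which scalar superdiagonals each entry occupies after its embedding into the infinite block-matrix $L(\Lambda_{[q]})$, so that the cancellation needed to cut the band from the generic value $(N+1)q-1$ down to $Nq-r$ can be verified. Everything else is formal manipulation of the $LU$ factors, entirely parallel to the proof of Proposition \ref{MatrixConnect}.
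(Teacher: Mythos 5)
Your proposal is correct and takes essentially the approach the paper intends: the paper states the left-multiplication results without proof as being analogous to Propositions \ref{ConnectM} and \ref{MatrixConnect}, and your argument (deriving $L(\Lambda_{[q]})\check{\mathscr M}=\mathscr M$ from $X_{[q]}(x)L(x)=L(\Lambda_{[q]})X_{[q]}(x)$, substituting the $LU$ factors, and reading off triangularity and bandwidth from the sparsity of $L_N$, $L_{N-1}$) is exactly that analogue carried out in detail. One small arithmetic slip: for $l\le N-2$ the highest scalar superdiagonal reached by the block $L_l$ is $(N-2)q+(q-1)=(N-1)q-1$, not $(N-1)q+(q-1)$ (with your value the inequality $\le M-1$ would fail for $r\ge 1$); with the corrected value one has $(N-1)q-1\le Nq-r-1=M-1$ since $r\le q-1$, so the stated band bound is unaffected.
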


\begin{Proposition}
    The following connection formulas hold:
\[    \begin{aligned}
        \Omega \check{B}(x) &= B(x) L(x), & A(x) \Omega &= \check{A}(x).
    \end{aligned}\]
\end{Proposition}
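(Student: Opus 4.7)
My plan is to derive both connection formulas by a direct algebraic verification that parallels, step by step, the proof of Proposition~\ref{ConnectAB} in the right-multiplication setting, using the two equivalent expressions for $\Omega$ recorded in the preceding proposition.

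For the identity $A(x)\Omega=\check{A}(x)$ I would employ the Gauss--Borel form $\Omega=\bar{S}^{-\top}\check{\bar{S}}^{\top}$. With the normalizations adopted in this subsection, $A(x)=X_{[p]}^{\top}(x)\bar{S}^{\top}$ and $\check{A}(x)=X_{[p]}^{\top}(x)\check{\bar{S}}^{\top}$, the identity collapses to a one-line cancellation
\[
A(x)\Omega = X_{[p]}^{\top}(x)\bar{S}^{\top}\bar{S}^{-\top}\check{\bar{S}}^{\top} = X_{[p]}^{\top}(x)\check{\bar{S}}^{\top} = \check{A}(x),
\]
and nothing further is required.

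For the identity $\Omega\check{B}(x)=B(x)L(x)$ I would instead use the alternative expression $\Omega=H^{-1}S\,L(\Lambda_{[q]})\,\check{S}^{-1}\check{H}$, combined with $\check{B}(x)=\check{H}^{-1}\check{S}\,X_{[q]}(x)$ and $B(x)=H^{-1}S\,X_{[q]}(x)$. After cancelling $\check{S}^{-1}\check{H}\,\check{H}^{-1}\check{S}=I$ the statement reduces to the shift identity
\[
L(\Lambda_{[q]})\,X_{[q]}(x) = X_{[q]}(x)\,L(x),
\]
which is the left-multiplication counterpart of the manipulation carried out inside the proof of Proposition~\ref{ConnectM}. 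Starting from $\Lambda_{[q]}X_{[q]}(x)=xX_{[q]}(x)$, a block-by-block computation shows that the $i$-th $q\times q$ block of $L(\Lambda_{[q]})X_{[q]}(x)$ is $\sum_{l=0}^{N}L_l\,x^{i+l}=x^{i}L(x)$, which coincides with the $i$-th block of $X_{[q]}(x)L(x)$.

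The only (mild) obstacle is one of book-keeping. Because the matrix coefficients $L_l$ sit to the left of the shifts $\Lambda_{[q]}^{l}$ in $L(\Lambda_{[q]})$, their natural reappearance after acting on $X_{[q]}(x)$ is to the right of $X_{[q]}(x)$, producing $X_{[q]}(x)L(x)$ rather than $L(x)X_{[q]}(x)$. This is the same $\mathbb{C}^{q\times q}[x]$-bimodule convention on $\mathbb{C}^{\infty\times q}[x]$ already exploited in the right-multiplication argument of Proposition~\ref{ConnectM}; once that convention is fixed, both connection formulas follow without additional computation.
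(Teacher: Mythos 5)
Your proposal is correct and is exactly the argument the paper has in mind: the paper omits the proof here, stating that the calculations are analogous to the right-multiplication case, and your two computations reproduce precisely that analogue (the Gauss--Borel cancellation as in Proposition \ref{ConnectAB}, and the intertwining identity $L(\Lambda_{[q]})X_{[q]}(x)=X_{[q]}(x)L(x)$ as in Proposition \ref{ConnectM}). Nothing further is needed.
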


As before, these formulas do not directly allow us to solve for the components of the unknown $\Omega$. Therefore, we turn to studying the Cauchy transforms of these relations.

\begin{Proposition}
    In terms of Cauchy transforms, the following identities hold:
\[    \begin{aligned}
        \Omega \check{D}(x) &= D(x), & L(x) \check{C}(x) &= C(x)\Omega + \int_{\Delta} \frac{L(x) - L(y)}{x - y} \mathrm{d} \check{\mu}(y) \check{A}(y).
    \end{aligned}\]
\end{Proposition}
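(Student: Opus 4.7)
The plan is to mirror the proof of Proposition \ref{ConnectCD}, exploiting the duality between right and left multiplication. The two connection formulas from the preceding proposition, $\Omega \check{B}(x) = B(x) L(x)$ and $A(x) \Omega = \check{A}(x)$, together with the defining relation $L(x)\,\d \check{\mu}(x) = \d \mu(x)$, supply exactly the algebraic input that is needed.

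For the first identity $\Omega \check{D}(x) = D(x)$, I would start from the definition $\check{D}(x) = \int_\Delta \check{B}(y) \frac{\d \check{\mu}(y)}{x-y}$, pull the constant matrix $\Omega$ inside the integral, apply $\Omega \check{B}(y) = B(y) L(y)$, and then absorb $L(y)\,\d \check{\mu}(y)$ into $\d \mu(y)$. The result is precisely $\int_\Delta B(y) \frac{\d \mu(y)}{x-y} = D(x)$, so no integral remainder appears.

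For the second identity, I would begin with the product $C(x)\Omega$, move $\Omega$ inside the integral, use $A(y)\Omega = \check{A}(y)$, and rewrite $\d \mu(y) = L(y)\,\d \check{\mu}(y)$ to obtain $C(x)\Omega = \int_\Delta \frac{L(y)\,\d \check{\mu}(y)}{x-y}\check{A}(y)$. Independently, pulling $L(x)$ (which does not depend on $y$) inside the integral in the definition of $\check{C}$ gives $L(x)\check{C}(x) = \int_\Delta \frac{L(x)\,\d \check{\mu}(y)}{x-y}\check{A}(y)$. Subtracting these two expressions combines the two integrals into a single one with integrand $\frac{L(x)-L(y)}{x-y}\,\d \check{\mu}(y)\,\check{A}(y)$, which, exactly as in Lemma \ref{R-BivaMP}, is a bivariate matrix polynomial in $x$ and $y$ and hence perfectly well defined.

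The only subtle point, and what I expect to be the main thing to watch, is the non-commutativity of the matrix factors: one must keep $L$ and $\d \check{\mu}$ on the correct sides throughout, since in the left-multiplication setting $L(y)$ acts on the left of $\d \check{\mu}(y)$ (as encoded in $L(x)\,\d \check{\mu}(x) = \d \mu(x)$), and similarly $\Omega$ now multiplies $\check{B}$ from the left and $A$ from the right. Once the ordering is respected, both identities drop out by one-line manipulations, with no new ingredients beyond the connection formulas already established.
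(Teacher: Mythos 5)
Your proposal is correct and is exactly the argument the paper has in mind: for this left-multiplication proposition the paper omits the proof, stating only that the calculations are analogous to the right-multiplication case, and your computation is precisely that analogue of the proof of Proposition \ref{ConnectCD} (using $\Omega \check{B}=BL$, $A\Omega=\check{A}$, $L\,\d\check\mu=\d\mu$, and the polynomiality of $\frac{L(x)-L(y)}{x-y}$ as in Lemma \ref{R-BivaMP}), with the remainder integral correctly appearing on the $L(x)\check{C}(x)$ side.
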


To state the Christoffel-type formulas in the context of a left Geronimus transformation, we introduce the following notation:

\begin{Definition}
    Based on Definitions \ref{D and W bb} and \ref{K bb}, we define:
    \begin{align*}
        \mathbb{C}_n^{(i)} &\coloneq \boldsymbol{v}_i^L C_n(x_i), \\
        \mathbb{W}_n^{(i)}&\coloneq \boldsymbol{v}_i^L L'(x_i) \boldsymbol{v}_i^R \boldsymbol{\xi}_i(x_i) A_n(x_i), \\
        \mathbb{K}^{[n],(i)}( y) &\coloneq \boldsymbol{v}_i^L K_C^{[n]}(x_i, y) - \boldsymbol{v}_i^L L'(x_i) \boldsymbol{v}_i^R \boldsymbol{\xi}_i(x_i) K^{[n]}(x_i, y).
    \end{align*}
\end{Definition}

\begin{Definition}
    The $\tau$-determinants are defined as follows:
    \begin{equation*}
        \tau_{n} \coloneq \begin{vNiceMatrix}
            \mathbb{C}_{n-M+1}^{(1)}- \mathbb{W}_{n-M+1}^{(1)}& \Cdots & \mathbb{C}_{n}^{(1)}- \mathbb{W}_{n}^{(1)}\\
            \Vdots & & \Vdots \\
            \mathbb{C}_{n-M+1}^{(M)}- \mathbb{W}_{n-M+1}^{(M)}& \Cdots & \mathbb{C}_{n}^{(M)}- \mathbb{W}_{n}^{(M)}
        \end{vNiceMatrix},
    \end{equation*}
    with $n \in \{ M-1, M, M-2, \dots \}$. 
\end{Definition}

The Christoffel-type formulas are derived by solving a linear system. Under this left perturbation, Theorem \ref{ExistsTheorem} holds, and if orthogonality exists, these $\tau$-determinants must be nonzero, which is equivalent to the fact that the linear system is inhomogeneous.

\begin{Theorem}
    The Christoffel-type formulas for a left Geronimus perturbation are:
    \begin{align*}
        \check{B}_n^{(b)}(y) &= \frac{1}{\tau_n} \sum_{\tilde{b}=1}^q \begin{vNiceMatrix}
            \mathbb{C}_{n-M+1}^{(1)}- \mathbb{W}_{n-M+1}^{(1)}& \Cdots & \mathbb{C}_{n-1}^{(1)}- \mathbb{W}_{n-1}^{(1)}& \mathbb{K}_{\tilde{b}}^{[n-M],(1)}( y) + \frac{v_{1; \tilde{b}}^L}{y - x_1} \\
            \Vdots & & \Vdots & \Vdots \\
            \mathbb{C}_{n-M+1}^{(M)}- \mathbb{W}_{n-M+1}^{(M)}& \Cdots & \mathbb{C}_{n-1}^{(M)}- \mathbb{W}_{n-1}^{(M)}& \mathbb{K}_{\tilde{b}}^{[n-M],(M)}(y) + \frac{v_{M; \tilde{b}}^L}{y - x_M}
        \end{vNiceMatrix} L_{\tilde{b}, b}(x), \\
        \check{A}_n^{(a)}(x) &= \frac{1}{\tau_{n-1}} \begin{vNiceMatrix}
            \mathbb{C}_{n-M}^{(1)}- \mathbb{W}_{n-M}^{(1)}& \Cdots & \mathbb{C}_n^{(1)}- \mathbb{W}_n^{(1)}\\
            \Vdots & & \Vdots \\
            \mathbb{C}_{n-M}^{(M)}- \mathbb{W}_{n-M}^{(M)}& \Cdots & \mathbb{C}_n(x_M) - \mathbb{W}_n^{(M)}\\[2pt]
            A_{n-M}^{(a)}(x) & \Cdots & A_n^{(a)}(x)
        \end{vNiceMatrix}.
    \end{align*}
\end{Theorem}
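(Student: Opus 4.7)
The plan is to mirror the strategy of Theorem \ref{ExplicitFormulas}, exploiting the duality between left and right Geronimus perturbations. The two central ingredients are: (i) a linear system for the super-diagonal entries of the upper unitriangular connection matrix $\Omega$, obtained by applying left eigenvectors to a Cauchy-transform identity, and (ii) Cramer's rule inserted into the two simple connection formulas $A(x)\Omega=\check{A}(x)$ and $\Omega\check{B}(x)=B(x)L(x)$ together with a dual of Proposition \ref{Futuro Tau} for the mixed kernel $K_C$.

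\emph{Step 1: linear system for $\Omega$.} Starting from
\[
L(x)\check{C}(x) = C(x)\Omega + \int_\Delta \frac{L(x)-L(y)}{x-y}\,\mathrm{d}\check\mu(y)\,\check{A}(y),
\]
I take the limit $x\to x_i$ and multiply on the left by the left eigenvector $\boldsymbol{v}_i^L$. Since $\boldsymbol{v}_i^L L(x_i)=0$, explicitly decomposing $\mathrm{d}\check\mu$ as in the left analog of \eqref{eq:perturbed measure} keeps only the $\boldsymbol{v}_i^R\boldsymbol{\xi}_i(x_i)$ mass. The bivariate polynomial character of $\frac{L(x)-L(y)}{x-y}$ (the dual of Lemma \ref{R-BivaMP}), combined with the projection property for $\check{A}$ from a left-multiplication companion of Corollary \ref{ProyectorK-2}, kills the integral when paired with $\check{A}_n$ for $n$ large enough. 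Reading off the $n$-th column and combining with $A(x)\Omega=\check{A}(x)$ produces, for $n\ge M$, the system
\[
\begin{bNiceMatrix} \Omega_{n-M,n} & \Cdots & \Omega_{n-1,n}\end{bNiceMatrix}
\begin{bNiceMatrix}
\mathbb{C}_{n-M}^{(1)}-\mathbb{W}_{n-M}^{(1)} & \Cdots & \mathbb{C}_{n-M}^{(M)}-\mathbb{W}_{n-M}^{(M)}\\
\Vdots & & \Vdots\\
\mathbb{C}_{n-1}^{(1)}-\mathbb{W}_{n-1}^{(1)} & \Cdots & \mathbb{C}_{n-1}^{(M)}-\mathbb{W}_{n-1}^{(M)}
\end{bNiceMatrix}
=-\begin{bNiceMatrix}
\mathbb{C}_n^{(1)}-\mathbb{W}_n^{(1)} & \Cdots & \mathbb{C}_n^{(M)}-\mathbb{W}_n^{(M)}
\end{bNiceMatrix},
\]
which is the exact dual of Proposition \ref{OmegaComponents}.

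\emph{Step 2: Cramer's rule for $\Omega$ and derivation of $\check{A}_n^{(a)}$.} The left analog of Theorem \ref{ExistsTheorem} ensures $\tau_{n-1}\neq 0$ under the existence of perturbed orthogonality, so Cramer's rule yields $\Omega_{n-M,n}=(-1)^M\tau_n/\tau_{n-1}$ and $\Omega_{n-i,n}=(-1)^i\tau_n^{(i)}/\tau_{n-1}$. Writing $A(x)\Omega=\check{A}(x)$ entrywise gives $\check{A}_n^{(a)}(x)=A_n^{(a)}(x)+\sum_{i=1}^M \Omega_{n-i,n}\,A_{n-i}^{(a)}(x)$; recognising this sum as the cofactor expansion along the last row of the $(M{+}1)\times(M{+}1)$ determinant whose top $M$ rows are the coefficient matrix of Step 1 produces the stated Christoffel formula for $\check{A}_n^{(a)}(x)$.

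\emph{Step 3: derivation of $\check{B}_n^{(b)}$.} Here I establish the dual of Proposition \ref{Futuro Tau} for the mixed kernel $K_C$: inserting $\Pi_{n-1}$ in the Cauchy-transform identity, then taking limits at each $x_i$ and applying the left eigenvectors, produces a rectangular identity that expresses $\bigl(\mathbb{K}_{\tilde b}^{[n-M],(i)}(y)+\frac{v_{i;\tilde b}^L}{y-x_i}\bigr)L(x)$ as the product of the coefficient matrix of Step 1 with a triangular block of $\Omega$-entries and the row $\bigl(\check{B}_n^{(b)}(y),\dots,\check{B}_{n+M-1}^{(b)}(y)\bigr)$ (contracted against $L_{\tilde b,b}$). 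A right multiplication by the appropriate standard basis vector isolates $\check{B}_n^{(b)}(y)$, and substituting the Cramer expression from Step 2 gives the claimed determinantal formula after reinterpreting the resulting quotient as a single $(M{+}1)\times(M{+}1)$ determinant divided by $\tau_n$.

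The main obstacle will be Step 3: transcribing the mixed-kernel calculation of Proposition \ref{Futuro Tau} to the left-multiplication setting, where $L(x)$ acts on the left of $\mathrm{d}\check\mu$ and the leading-matrix block \eqref{CondicionesMatricesLideresFinales} has the identity placed differently. This requires a careful companion of Corollary \ref{ProyectorK-2} governing when the $K_C$-projection truncates, together with consistent bookkeeping of $K_C^{[n-1]}(y,x_i)$ versus $K^{[n-1]}(y,x_i)$ symmetric to the $K_D$ analysis that underpinned the right case.
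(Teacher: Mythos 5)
Your plan is essentially the paper's intended argument: the paper states the left-perturbation results without detailed proofs, remarking they are analogous to the right-multiplication case, and your three steps (the dual of Proposition \ref{OmegaComponents} giving the linear system for the column entries $\Omega_{n-M,n},\dots,\Omega_{n-1,n}$, Cramer's rule combined with $A(x)\Omega=\check{A}(x)$ for $\check{A}_n^{(a)}$, and the $K_C$-dual of Proposition \ref{Futuro Tau} together with $\Omega\check{B}=BL$ for $\check{B}_n^{(b)}$) are precisely that transcription, and they work. Two minor bookkeeping points: the resulting $\check{B}_n^{(b)}$ formula is an $M\times M$ determinant over $\tau_n$ (not $(M+1)\times(M+1)$ — that size occurs only for the simple connection-formula side, here $\check{A}_n^{(a)}$), and, as in the proof of Theorem \ref{ExplicitFormulas}, one should record the row-reduction step that truncates $\mathbb{K}^{[n-1],(i)}$ to $\mathbb{K}^{[n-M],(i)}$ inside the determinant.
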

\subsection{Markov--Stieltjes Functions}

The Markov--Stieltjes function of the matrix of measures \(\d\mu\) is given by  
\[
F(z) = \int_{\Delta}\frac{\d\mu(x)}{z-x},
\]  
which is the Cauchy transform of the identity matrix and plays a crucial role in the spectral theory of orthogonal polynomials. In \cite{Zhe}, linear spectral transformations of the Markov--Stieltjes function for standard orthogonal polynomials were studied in the form  
\[
F(z) \longrightarrow \frac{A(z) F(z) + B(z)}{C(z)F(z) + D(z)},
\]  
where \(A, B, C,\) and \(D\) are polynomials.  

Zhedanov proved that Christoffel transformations generate a more general linear spectral transformation of the form  
\[
F(z) \longrightarrow A(z) F(z) + B(z).
\]  
He further proved that the composition of \(N\) successive Geronimus transformations generates a more general linear spectral transformation of the form  
\[
F(z) \longrightarrow \frac{F(z) + B(z)}{D(z)},
\]  
where \(\deg B = N-1\) and \(\deg D = N\).  

We now prove that similar statements hold for Geronimus transformations in this mixed multiple scenario.  

Here, we consider a Markov--Stieltjes \(q \times p\)-matrix function  
\[
F(z) \coloneq \int_{\Delta}\frac{\d\mu(x)}{z-x}.
\]  

\begin{Proposition}
	The Geronimus transformation described in Equation \eqref{eq:perturbed measure} amounts to a matrix linear spectral transformation of the form  
	\[
	\check{F}(z) = \big(F(z) + S(z)\big) R^{-1}(z),
	\]  
	where \(\deg S = \deg R - 1\) with matrix polynomial coefficients given by
	\[
\begin{aligned}
		S_k&=\int_{\Delta} \d\check \mu(x) (R_{k+1}+xR_{k+2}+\cdots+x^{N-k-1}R_N), & k&\in\{0,\dots,N-1\}.
\end{aligned}
		\]
\end{Proposition}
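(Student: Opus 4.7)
The plan is to compute $F(z)$ directly in terms of $\check F(z)$ using the defining relation $\d\check\mu(x) R(x) = \d\mu(x)$ together with the algebraic split
\[
\frac{R(x)}{z-x} = \frac{R(z)}{z-x} - \frac{R(z)-R(x)}{z-x},
\]
and then to read off the matrix polynomial $S(z)$ from the second term.

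First I would substitute $\d\mu = \d\check\mu\,R$ into the definition $F(z)=\int_\Delta \d\mu(x)/(z-x)$, a substitution consistent with \eqref{eq:perturbed measure} because the Dirac masses in $\check\mu$ are annihilated on the right by $R$ (since $\boldsymbol v_i^L R(x_i)=0$). Applying the split above gives
\[
F(z) = \int_\Delta\frac{\d\check\mu(x)R(x)}{z-x} = \check F(z) R(z) - S(z), \qquad S(z)\coloneq \int_\Delta \d\check\mu(x)\,\frac{R(z)-R(x)}{z-x}.
\]
Right-multiplying by $R^{-1}(z)$, which is a well-defined rational matrix function on $\C\setminus\sigma(R)$ (a set disjoint from $\Delta$), immediately yields $\check F(z) = (F(z)+S(z))R^{-1}(z)$.

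Second I would identify the matrix polynomial coefficients of $S(z)$. By Lemma \ref{R-BivaMP}, the kernel $(R(z)-R(x))/(z-x)$ is a bivariate matrix polynomial of degree $N-1$ in each variable. Reindexing the double sum in \eqref{eq:Rxy} by setting $k=i-j$ (so that $i$ ranges from $k+1$ to $N$ for each fixed $k$) rewrites it as
\[
\frac{R(z)-R(x)}{z-x} = \sum_{k=0}^{N-1} z^k\bigl(R_{k+1}+xR_{k+2}+\cdots+x^{N-k-1}R_N\bigr).
\]
Integrating term by term against $\d\check\mu(x)$ then produces exactly the stated formula for $S_k$ and confirms $\deg S = N-1 = \deg R - 1$.

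The main obstacle is essentially only bookkeeping: verifying the distributional identity $\d\check\mu(x)R(x)=\d\mu(x)$ (which follows from $\boldsymbol v_i^L R(x_i)=0$ applied to the Dirac parts of $\check\mu$) and performing the reindexing of \eqref{eq:Rxy} correctly. Once those are in hand the result is a direct matrix-valued extension of Zhedanov's scalar linear spectral transformation identity, requiring no deeper machinery.
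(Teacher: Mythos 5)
Your proposal is correct and follows essentially the same route as the paper: both rest on the defining relation $\d\check\mu(x)R(x)=\d\mu(x)$, the identity $\check F(z)R(z)=F(z)+S(z)$ with $S(z)=\int_\Delta \d\check\mu(x)\,\frac{R(z)-R(x)}{z-x}$, and the bivariate expansion \eqref{eq:Rxy} of Lemma \ref{R-BivaMP}. Your explicit reindexing to extract $S_k$ and your remark that the Dirac parts of $\check\mu$ are killed on the right by $R$ (via $\boldsymbol v_i^L R(x_i)=0$) are just slightly more detailed versions of steps the paper leaves implicit.
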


\begin{proof}
	This follows from the relationship \(\d \check{\mu}(x) R(x) = \d \mu(x)\). Indeed, we have the identity  
	\[
	\check{F}(z) R(z) = F(z) + S(z),
	\]  
	where  
	\[
	S(z) \coloneq \int_{\Delta} \d\check{\mu}(x) \frac{R(z) - R(x)}{z-x}.
	\]  
	Recalling \eqref{eq:Rxy}, we see that  
	\[
	\begin{aligned}
		S(z) &= \sum_{i=1}^N R_i \sum_{j=1}^i \left( \int_{\Delta} \d\check{\mu}(x) x^{i-j} \right) z^{j-1},
	\end{aligned}
	\]  
	which is a polynomial of degree \(N-1\).
\end{proof}

\subsection{Case Study: Non-Trivial Perturbations of  Jacobi--Piñeiro}
We will work with an explicit example. We consider Jacobi--Piñeiro multiple orthogonal polynomials, that is $q=1$, and in particular, $p=3$. Based on the results shown in \cite{ExamplesMOP}, we have $\d\mu_a=w_a\d\mu$, where the weight functions $w_a$ and measure $\d\mu$ are
\begin{align*}
&\begin{aligned}
 w_{a}(x;\alpha_a)=&x^{\alpha_a},& a&\in\{1,2,3\}, 
\end{aligned}&\d\mu(x)&=(1-x)^\beta\d x, & \Delta&=[0,1],
\end{align*}
with $\alpha_1,\alpha_2,\alpha_3,\beta>-1$ and, in order to have an AT system, $\alpha_i-\alpha_j\not\in\mathbb Z$ for $i\neq j$. An AT system of measures is such that the matrix 
\begin{equation*}
    \mathcal{A}_{n}(x) = \begin{bNiceMatrix}
        A_{3n}^{(1)}(x) & A_{3n+1}^{(1)}(x) & A_{3n+2}^{(1)}(x) \\[4pt]
        A_{3n}^{(2)}(x) & A_{3n+1}^{(2)}(x) & A_{3n+2}^{(2)}(x) \\[4pt]
        A_{3n}^{(3)}(x) & A_{3n+1}^{(3)}(x) & A_{3n+2}^{(3)}(x)
    \end{bNiceMatrix},
\end{equation*}
has terms of degree $n$ in the main diagonal and above it, below the main diagonal there are terms of degree $n-1$. If the orthogonality exists for a given vector of measures and the matrix of initial conditions, $\mathcal{A}_0$, is upper triangular with nonzero constants in the main diagonal and above it then we will have an AT system. 

The moments are 
\[\begin{aligned}
\label{MomentJP}
\int_{0}^{1}x^{\alpha_a+k}(1-x)^\beta\d x&=\dfrac{\Gamma(\beta+1)\Gamma(\alpha_a+k+1)}{\Gamma(\alpha_a+\beta+k+2)},
& k &\in \mathbb N_0.
\end{aligned}\]

    The Pochhammer symbols $(x)_n$, $x\in\C$ and $n\in\N_0$, are defined as,
\begin{align*}
 (x)_n\coloneq\dfrac{\Gamma(x+n)}{\Gamma(x)}=\begin{cases}
 x(x+1)\cdots(x+n-1)\;\text{if}\;n\in\N,\\
 1\;\text{if}\;n=0.
 \end{cases}
\end{align*}

The step-line in this situation means that we are considering triples of nonnegative integers, say $(n_1, n_2, n_3)$, of the form
\begin{align*}
	(0, 0, 0), (1, 0, 0), (1, 1, 0), (1, 1, 1), (2, 1, 1), (2, 2, 1), \dots.
\end{align*}
On the step-line, we have, for $m \in \mathbb{N}_0$,
\[
\begin{aligned}
	n &= 3m, & (n_1, n_2, n_3) &= (m, m, m), \\
	n &= 3m + 1, & (n_1, n_2, n_3) &= (m + 1, m, m), \\
	n &= 3m + 2, & (n_1, n_2, n_3) &= (m + 1, m + 1, m).
\end{aligned}
\]

    The Jacobi--Piñeiro polynomials of type I are, see  \cite{ExamplesMOP},
    \begin{align*}
\label{JPTypeI}
 A_n^{(a)}(x) \coloneq P^{(a)}_{n}(x;\alpha_1,\alpha_2,\alpha_3,\beta) =\sum_{l=0}^{n_a-1}C_{n}^{(a),l}x^l,
\end{align*}
with
\begin{multline*}
C_{n}^{(a),l}\coloneq\frac{
(-1)^{n-1}
\prod_{q=1}^{3}(\alpha_q+\beta+n)_{n_q}}{
(n_a-1)! 
\prod_{q=1,q\neq a}^{3}(\alpha_q-\alpha_a)_{n_q}} 
\frac{\Gamma(\alpha_a+\beta+n)}{\Gamma(\beta+n)\Gamma(\alpha_a+1)}
\\ \times
\frac{(-n_a+1)_l(\alpha_a+\beta+n)_l}{l!(\alpha_a+1)_l}
\prod_{q=1,q\neq a}^3
\frac{(\alpha_a-\alpha_q-n_q+1)_l}{(\alpha_a-\alpha_q+1)_l},
\end{multline*}
and 
\[\begin{aligned}
    n_a & = \deg(A_n^{(a)})+1, & n & = n_1+n_2+n_3.
\end{aligned}\]

    The monic Jacobi--Piñeiro polynomials of type II are, see  \cite{ExamplesMOP},
    \begin{align}
        B_n(x) \coloneq P_{n}(x;\alpha_1,\alpha_2,\alpha_3,\beta)=\sum_{l_1=0}^{n_1}\sum_{l_2=0}^{n_2}\sum_{l_3=0}^{n_3}
        C_{n}^{l_1,l_2,l_3}\, x^{l_1+l_2+l_3}
    \end{align}
    with
    \begin{multline*}
        C_{n}^{l_1,l_2,l_3}
        \coloneq
        (-1)^{n}\prod_{q=1}^3\dfrac{(\alpha_q+1)_{n_q}}{(\alpha_q+\beta+n+1)_{n_q}}\dfrac{(-n_q)_{l_q}}{l_q!}\dfrac{(\alpha_1+\beta+n_1+1)_{l_1+l_2+l_3}}{(\alpha_1+1)_{l_1+l_2+l_3}}
        \\
        \times\dfrac{(\alpha_1+n_1+1)_{l_2+l_3}(\alpha_{2}+n_{2}+1)_{l_3}}{(\alpha_1+\beta+n_1+1)_{l_2+l_3}(\alpha_{2}+\beta+n_1+n_{2}+1)_{l_3}}
        \\ \times \dfrac{(\alpha_2+\beta+n_1+n_2+1)_{l_2+l_3}(\alpha_{3}+\beta+n+1)_{l_3}}{(\alpha_2+1)_{l_2+l_3}(\alpha_{3}+1)_{l_3}}.
    \end{multline*}

The coefficients of the recurrence matrix $T$ (which is a banded (3,1) matrix) are given by (see \cite{ExamplesMOP}):
\begin{multline}
 T_{{pm+k}}^j=\dfrac{(\alpha_{a+1}+\beta+4m+k+1-j)}{\prod_{q=5+k-j}^{4+k}(\alpha_a+\beta+4m+k-j)}
 \\
 \times
 \dfrac{(\beta+3m+k+1-j)_j\prod_{q=1}^3(\alpha_a+\beta+3m+k+1-j)_{j}}{\prod_{q=k+1}^{3+k}(\alpha_a+\beta+4m+k+1-j)_{j}}
 \\
 \times
 \sum_{i=k+1}^{4+k-j}\dfrac{(\alpha_i+m)}{(\alpha_i+\beta+4m+k-j)_{j+2}}\dfrac{\prod_{q=1}^3(\alpha_i-\alpha_a+m)}{\prod_{q=k+1,q\neq i}^{4+k-j}(\alpha_i-\alpha_a)},
\end{multline}
for $j\in\{0,1,2,3\}$, $m\geq0$ and $k\in\{0,1,2\}$. The superscript $j=0$ corresponds to the only upper diagonal, whereas $j\in\{1,2,3\}$ corresponds to the first, second and third subdiagonals, respectively.

We are going to perturb the measure as follows:
\begin{equation*}
    \d \check{\mu}(x) \begin{bNiceMatrix}
        0 & x & 0 \\
        0 & 0 & 1-x\\
        1 & 0 & 0
    \end{bNiceMatrix} = \d \mu(x).
\end{equation*}
The perturbation matrix, which we will denote by $R(x)$, satisfies Conditions \ref{LeadingMatrixConditions} (as mentioned earlier Condition \ref{CondicionesMatricesLideresFinales} is merely a normalization condition). The perturbed measure can be expressed in terms of the original measure as follows:
\begin{align*}
    \d \check{\mu}(x) & = \frac{1}{x(1-x)}\d \mu(x) \begin{bNiceMatrix}
        0 & 0 & x(1-x) \\
        1-x & 0 & 0 \\
        0 & x & 0
    \end{bNiceMatrix} + \xi_0 \delta(x) \boldsymbol{e_1} + \xi_1 \delta(x-1) \boldsymbol{e}_2 \\
    & = \begin{bNiceMatrix}
        \dfrac{\d \mu_2(x)}{x} + \xi_0 \delta(x) & \dfrac{\d \mu_3(x)}{1-x}+ \xi_1 \delta(x-1) & \d \mu_1(x) 
    \end{bNiceMatrix} 
    \\
    & = \begin{bNiceMatrix}
    	x^{\alpha_2-1}(1-x)^\beta\d x+ \xi_0 \delta(x) & x^{\alpha_3}(1-x)^{\beta-1}+ \xi_1 \delta(x-1) & x^{\alpha_1}(1-x)^\beta\d x
    \end{bNiceMatrix} . 
\end{align*}

In what follows, let us assume that $\alpha_2,\beta > 0$. Note that, even if the two masses located at $x=0$ and $x=1$ were equal to zero, the new family of orthogonal polynomials would not correspond to Jacobi--Piñeiro multiple orthogonal polynomials for any arrangement of $(\alpha_1,\alpha_2,\alpha_3,\beta)$.   

    The Jacobi--Piñeiro multiple orthogonal polynomials at the endpoints of the interval are:
\[    \begin{aligned}
        P_n(0) & = (-1)^n \prod_{a=1}^3\frac{(\alpha_a+1)_{n_a}}{(\alpha_a+\beta+1)_{n_a}}, & P_n(1) & =  \frac{(\beta+1)_{n}}{\prod_{a=1}^3(\alpha_a+\beta+n+1)_{n_a}}.
    \end{aligned}\]

\begin{Proposition}
    The Cauchy transform for the type II Jacobi--Piñeiro can be explicitly computed for $x=0$, in particular we have:
    \begin{align*}
        D_n^{(2)}(0;\alpha_1,\alpha_2,\alpha_3,\beta) & = - \Gamma(\beta+1) \sum_{l_1=0}^{n_1}\sum_{l_2=0}^{n_2}\sum_{l_3=0}^{n_3}
        \frac{C_{n}^{l_1,l_2,l_3}}{(\alpha_2+l_1+l_2+l_3)_{\beta+1}}, \\
        D_n^{(3)}(1;\alpha_1,\alpha_2,\alpha_3,\beta) & = \Gamma(\beta) \sum_{l_1=0}^{n_1}\sum_{l_2=0}^{n_2}\sum_{l_3=0}^{n_3}
        \frac{C_{n}^{l_1,l_2,l_3}}{(\alpha_3+l_1+l_2+l_3)_{\beta}}.
    \end{align*}
\end{Proposition}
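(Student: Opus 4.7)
The plan is a direct integral evaluation. Since $q=1$, the ``row vector'' $B_n(x)$ is a scalar polynomial, so the two components of interest of the Cauchy transform collapse to
\[
D_n^{(a)}(z) = \int_0^1 B_n(x)\,\frac{x^{\alpha_a}(1-x)^\beta}{z-x}\,\d x.
\]
Specialising $z=0$, $a=2$ absorbs one factor of $x$ from $x^{\alpha_2}$ and flips a sign, giving $D_n^{(2)}(0) = -\int_0^1 B_n(x)\, x^{\alpha_2-1}(1-x)^\beta\, \d x$; specialising $z=1$, $a=3$ similarly produces $D_n^{(3)}(1) = \int_0^1 B_n(x)\, x^{\alpha_3}(1-x)^{\beta-1}\, \d x$. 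The assumption $\alpha_2,\beta>0$ made just before the proposition is precisely what makes both integrals absolutely convergent at the relevant endpoint (the original Jacobi--Piñeiro condition $\alpha_a,\beta>-1$ would fail once we divide by $x$ or by $1-x$).

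Next, I would insert the explicit monomial expansion $B_n(x) = \sum_{l_1,l_2,l_3} C_n^{l_1,l_2,l_3}\, x^{l_1+l_2+l_3}$ and interchange the (finite) triple sum with the integral. Writing $L = l_1+l_2+l_3$, the remaining integrals are classical Euler beta integrals,
\[
\int_0^1 x^{\alpha_2+L-1}(1-x)^\beta\,\d x = \frac{\Gamma(\alpha_2+L)\,\Gamma(\beta+1)}{\Gamma(\alpha_2+L+\beta+1)}, \qquad \int_0^1 x^{\alpha_3+L}(1-x)^{\beta-1}\,\d x = \frac{\Gamma(\alpha_3+L+1)\,\Gamma(\beta)}{\Gamma(\alpha_3+L+\beta+1)}.
\]
Pulling $\Gamma(\beta+1)$ (respectively $\Gamma(\beta)$) out of the sum and translating the remaining gamma-function quotients through the generalised Pochhammer symbol $(a)_\nu = \Gamma(a+\nu)/\Gamma(a)$ already adopted in the paper yields the denominators $(\alpha_2+L)_{\beta+1}$ and $(\alpha_3+L)_\beta$, reproducing the stated closed-form expressions.

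There is no real obstacle to speak of: the argument is essentially bookkeeping around the Euler beta integral on a triply-indexed finite sum. The only subtlety worth flagging is that with $\beta$ typically non-integer, the Pochhammer symbol must be read in the gamma-ratio sense so that the index shifts in the denominators line up; apart from that, Fubini is immediate, and the endpoint hypothesis $\alpha_2,\beta>0$ supplies all the analytic content required.
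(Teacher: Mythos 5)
Your proposal follows essentially the same route as the paper: evaluate $D_n^{(a)}(z)=\int_0^1 B_n(x)\,\frac{\d\mu_a(x)}{z-x}$ at the endpoint, expand $B_n$ in monomials, and apply the Euler beta integral with the Pochhammer symbol read as a gamma ratio, under the hypothesis $\alpha_2,\beta>0$ (the paper, too, only details the $z=0$ case and says the $z=1$ case is analogous). One small point to reconcile: your (correct) beta integral $\int_0^1 x^{\alpha_3+L}(1-x)^{\beta-1}\,\d x=\frac{\Gamma(\alpha_3+L+1)\,\Gamma(\beta)}{\Gamma(\alpha_3+L+\beta+1)}$ translates to the denominator $(\alpha_3+L+1)_\beta$ rather than the stated $(\alpha_3+L)_\beta$, so the second displayed formula appears to carry an off-by-one shift (seemingly a slip in the statement itself, which the paper's proof does not check explicitly) that your last step glosses over.
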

\begin{proof}
    Let us study the following integral, 
    \begin{equation*}
        \int_0^1 x^{l_1+l_2+l_3+\alpha_2-1}(1-x)^\beta \d x = \dfrac{\Gamma(\beta+1)\Gamma(\alpha_2+l_1+l_2+l_3)}{\Gamma(\alpha_2+\beta+l_1+l_2+l_3+1)} = \dfrac{\Gamma(\beta+1)}{(\alpha_2+l_1+l_2+l_3)_{\beta+1}}, 
    \end{equation*}
    which is always valid, provided that $\alpha_2 > 0$. The Cauchy transform of the type II Jacobi--Piñeiro can be expressed as:
    \begin{align*}
        D_n^{(2)}(0;\alpha_1,\alpha_2,\alpha_3,\beta) & = - \int_0^1P_n(x;\alpha_1,\alpha_2,\alpha_3,\beta) \frac{\d \mu_2(x)}{x} = - \sum_{l_1=0}^{n_1}\sum_{l_2=0}^{n_2}\sum_{l_3=0}^{n_3}
        C_{n}^{l_1,l_2,l_3}\, \int_\Delta x^{l_1+l_2+l_3} \frac{\d \mu_2(x)}{x} \\
        & = - \sum_{l_1=0}^{n_1}\sum_{l_2=0}^{n_2}\sum_{l_3=0}^{n_3}
        C_{n}^{l_1,l_2,l_3}\, \int_\Delta x^{l_1+l_2+l_3+\alpha_2-1}(1-x)^\beta \d x.
    \end{align*}
    Substituting the first result in this equation yields the desired relation. To prove the second relation, one proceeds similarly for $D_n^{(3)}(1;\alpha_1,\alpha_2,\alpha_3,\beta)$. 
\end{proof}

    For this example, the $\tau$-determinants take the form:
    \begin{equation*}
        \tau_{n} = \begin{vNiceMatrix}
            D_{n-1}^{(2)}(0)-P_{n-1}(0)\xi_0 & D_{n-1}^{(3)}(1)-P_{n-1}(1)\xi_1 \\[3pt]
            D_{n}^{(2)}(0)-P_{n}(0)\xi_0 & D_{n}^{(3)}(1)-P_{n}(1)\xi_1
        \end{vNiceMatrix}.
    \end{equation*}

\begin{Proposition}
    For $n > 1$, the Christoffel type formula between the type I Jacobi--Piñeiro polynomials and the Geronimus  perturbed polynomials is:
    \begin{multline*}
        \begin{bNiceMatrix}
            \check{P}_{n}^{(1)}(x) \\
            \check{P}_{n}^{(2)}(x) \\
            \check{P}_{n}^{(3)}(x) 
        \end{bNiceMatrix} = \frac{-1}{\tau_n}\sum_{i=0}^{n-2} \begin{bNiceMatrix}
            x P_i^{(2)}(x) \\
            (1-x) P_i^{(3)}(x) \\
            P_i^{(1)}(x)
        \end{bNiceMatrix} \begin{vNiceMatrix}
            D_i^{(2)}(0)-P_i(0)\xi_0 & D_i^{(3)}(1)-P_i(1)\xi_1 \\[3pt]
            D_{n-1}^{(2)}(0)-P_{n-1}(0)\xi_0 & D_{n-1}^{(3)}(1)-P_{n-1}(1)\xi_1
        \end{vNiceMatrix} \\ + \frac{1}{\tau_n}\begin{bNiceMatrix}
            D_{n-1}^{(3)}(1)-P_{n-1}(1)\xi_1 \\
            -D_{n-1}^{(2)}(0)+P_{n-1}(0)\xi_0 \\
            0
        \end{bNiceMatrix}.
    \end{multline*}
        For $n > 1$,  the Christoffel type formula between the type II Jacobi--Piñeiro polynomials and the Geronimus  perturbed polynomials is:
    \begin{equation*}
        \check{P}_n(x) = \frac{1}{\tau_{n-1}}\begin{vNiceMatrix}
            D_{n-2}^{(2)}(0)-P_{n-2}(0)\xi_0 & D_{n-2}^{(3)}(1)-P_{n-2}(1)\xi_1 & P_{n-2}(x) \\[3pt]
            D_{n-1}^{(2)}(0)-P_{n-1}(0)\xi_0 & D_{n-1}^{(3)}(1)-P_{n-1}(1)\xi_1 & P_{n-1}(x) \\[3pt]
            D_{n}^{(2)}(0)-P_{n}(0)\xi_0 & D_{n}^{(3)}(1)-P_{n}(1)\xi_1 & P_{n}(x)
        \end{vNiceMatrix}.
    \end{equation*}
\end{Proposition}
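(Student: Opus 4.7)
The proposition is a direct specialization of Theorem \ref{ExplicitFormulas} to the concrete perturbation $R(x)$, so the plan is to identify the spectral data of $R(x)$, evaluate the abstract objects $\mathbb{D}_n^{(i)}$, $\mathbb{W}_n^{(i)}$, $\mathbb{K}_{\tilde a}^{[n-1],(i)}(x)$ and $\tau_n$ in the present setting, and then expand the resulting determinants. Since $\det R(x) = x(1-x)$, the eigenvalues $x_1=0$ and $x_2=1$ are simple and $M=2$; the right eigenvectors may be chosen as $\boldsymbol v_1^R = \boldsymbol e_2$ and $\boldsymbol v_2^R = \boldsymbol e_3$, and the corresponding left eigenvectors as $\boldsymbol v_1^L = \boldsymbol e_1^\top$, $\boldsymbol v_2^L = \boldsymbol e_2^\top$, for which the scalars $\boldsymbol v_i^L R'(x_i)\boldsymbol v_i^R$ are $\pm 1$. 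Because $q=1$, each $B_n$ reduces to the monic scalar $P_n$, so Definition \ref{D and W bb} yields $\mathbb{D}_n^{(1)} = D_n^{(2)}(0)$, $\mathbb{D}_n^{(2)} = D_n^{(3)}(1)$, with $\mathbb{W}_n^{(1)}$ proportional to $P_n(0)\xi_0$ and $\mathbb{W}_n^{(2)}$ proportional to $P_n(1)\xi_1$; the $\pm 1$ factors are absorbed into the arbitrary $\xi_0,\xi_1$. The $\tau$-determinant then coincides with the $2\times 2$ determinant displayed just before the proposition, and the hypothesis $n>1$ guarantees $n \geq M$.

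The type II formula follows immediately from Equation \eqref{EqCheckB} with $M=2$ and $q=1$: the three rows of the $3\times 3$ determinant are labelled by $n-2, n-1, n$, and substituting the expressions just computed gives the claim. For the type I formula I apply \eqref{EqCheckA}. The only nonzero entries $R_{1,2}(x)=x$, $R_{2,3}(x)=1-x$, $R_{3,1}(x)=1$ force exactly one value of $\tilde a$ to contribute to each component, namely $\tilde a=2,3,1$ for $a=1,2,3$ respectively. Writing $\mathbb{K}_{\tilde a}^{[n-1],(i)}(x) = \sum_{r=0}^{n-1} P_r^{(\tilde a)}(x)(\mathbb{D}_r^{(i)}-\mathbb{W}_r^{(i)})$ and using $v_{1;\tilde a}^R = \delta_{\tilde a,2}$, $v_{2;\tilde a}^R = \delta_{\tilde a,3}$, each $2\times 2$ determinant in \eqref{EqCheckA} splits by multilinearity in its last row into two pieces: a sum over $r \in \{0,\dots,n-1\}$ of $P_r^{(\tilde a)}(x)$ times the $2\times 2$ minor with rows labelled by $r$ and $n-1$, which truncates at $r=n-2$ because the $r=n-1$ term has coincident rows, plus a pole contribution from $v_{i;\tilde a}^R/(x-x_i)$. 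The pole factors simplify to $x/(x-0)=1$ for the first component, $(1-x)/(x-1)=-1$ for the second, and $0$ for the third, producing exactly the additive column vector in the statement.

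The only genuinely delicate point is book-keeping of signs: the signs of $\boldsymbol v_i^L R'(x_i)\boldsymbol v_i^R$, of the chosen eigenvectors, and of the $2\times 2$ minors obtained by cofactor expansion must be reconciled with the convention fixed in $\tau_n$. All such signs can be absorbed into the arbitrary parameters $\xi_0$ and $\xi_1$, but they must be tracked consistently throughout the derivation of both the type I and the type II formulas, so that the signs of the sum and of the additive column in the type I formula agree with those arising from the same convention in the $\tau$-determinant denominators.
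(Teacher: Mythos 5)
Your route is the paper's route: the paper's entire proof is a one-line appeal to Theorem \ref{ExplicitFormulas} with the perturbation matrix substituted, and you carry that substitution out correctly — $\det R(x)=x(1-x)$, $M=2$, $\boldsymbol{v}_1^R=\boldsymbol{e}_2$, $\boldsymbol{v}_2^R=\boldsymbol{e}_3$, $\boldsymbol{v}_1^L=\boldsymbol{e}_1^\top$, $\boldsymbol{v}_2^L=\boldsymbol{e}_2^\top$, $\mathbb{D}_n^{(1)}=D_n^{(2)}(0)$, $\mathbb{D}_n^{(2)}=D_n^{(3)}(1)$, the type II formula read off from \eqref{EqCheckB}, and, for type I, the selection $\tilde a=2,3,1$ for $a=1,2,3$, the multilinear expansion of the last row of \eqref{EqCheckA} with its truncation at $i=n-2$ (coincident rows), and the identification of the pole contributions. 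Up to that point the proposal fills in, correctly, exactly the details the paper omits.

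The gap is the one step you defer, the sign bookkeeping. Replacing $\xi_1$ by $-\xi_1$ (to compensate $\boldsymbol{v}_2^L R'(1)\boldsymbol{v}_2^R=-1$) rescales the entire second column of every determinant uniformly, so absorption into the free parameters can reconcile $\mathbb{D}_n^{(2)}-\mathbb{W}_n^{(2)}=D_n^{(3)}(1)+P_n(1)\xi_1$ with the displayed $D_n^{(3)}(1)-P_n(1)\xi_1$, but it can never alter \emph{relative} signs within one formula; so your claim that the pole factors $x/x=1$, $(1-x)/(x-1)=-1$, $0$ "produce exactly the additive column in the statement" has to be checked, not asserted. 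If you actually carry the cofactor signs through (the $i=1$ pole sits in entry $(2,1)$ of the $2\times2$ determinant, the $i=2$ pole in entry $(2,2)$, with prefactors $R_{1,2}=x$ and $R_{2,3}=1-x$), the two nonzero entries of the additive column come out with the \emph{same} sign, whereas the proposition displays them with opposite signs; moreover a direct derivation from Propositions \ref{OmegaComponents} and \ref{Futuro Tau} (checked against biorthogonality in the scalar $M=1$ reduction) produces an overall sign opposite to \eqref{EqCheckA} as printed. So a fully tracked computation does not land verbatim on the stated formulas: it reproduces them only up to a sign slip in \eqref{EqCheckA} or in the first entry of the additive column of the proposition. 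Your proof should do this tracking explicitly and either fix the convention or flag the discrepancy; as written, the "signs can be absorbed" remark papers over precisely the point where the statement and the general theorem fail to match.
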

\begin{proof}
    It is direct consequence of Theorem \ref{ExplicitFormulas}, where we have substituted the corresponding perturbation matrix. 
\end{proof}
For an alternative expression using the 5-term recurrence relation we need
\begin{Proposition}
    The following relations hold:
    \begin{align*}
        \mathbb{K}_a^{[n],(0)} &= \begin{multlined}[t][.8\textwidth]
        \frac{1}{x} \left[ \sum_{j=1}^{3}\sum_{l=1}^j T^{j}_{n+l}P^{(a)}_{n+l}(x)\left( D^{(2)}_{n-j}(0)-\xi_0P_{n-j}(0)-D_{n-j}^{(2)}(x)  \right) \right. \\ \left. - T^{0}_n P^{(a)}_n(x)\left( D^{(2)}_{n+1}(0)-\xi_0P_{n+1}(0)-D_{n+1}^{(2)}(x)  \right) \right], 
        \end{multlined}\\
         \mathbb{K}_a^{[n],(1)} &= \begin{multlined}[t][.8\textwidth] \frac{1}{(x-1)} \left[ \sum_{j=1}^{3}\sum_{l=1}^j T^{j}_{n+l}P^{(a)}_{n+l}(x)\left( D^{(3)}_{n-j}(1)-\xi_1P_{n-j}(1)-D_{n-j}^{(3)}(x)  \right) \right. \\ \left. - T^{0}_n P^{(a)}_n(x)\left( D^{(3)}_{n+1}(1)-\xi_1P_{n+1}(1)-D_{n+1}^{(3)}(x)  \right) \right].  \end{multlined}
    \end{align*}
\end{Proposition}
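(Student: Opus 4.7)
The plan is to obtain both formulas as direct specialisations of Proposition~\ref{kbb reducido} to the Jacobi--Piñeiro data at hand. The first preparatory step is to determine the spectral data of the perturbation matrix $R(x)$: since $\det R(x)=x(1-x)$, the matrix has two simple eigenvalues $x_0=0$ and $x_1=1$, and solving $R(0)\boldsymbol{v}=0$ and $R(1)\boldsymbol{v}=0$ yields the right eigenvectors $\boldsymbol{v}_0^R=e_2$ and $\boldsymbol{v}_1^R=e_3$. The corresponding left eigenvectors are $\boldsymbol{v}_0^L=e_1^{\top}$ and $\boldsymbol{v}_1^L=e_2^{\top}$; together with the nonzero entries $R'(0)_{1,2}=1$ and $R'(1)_{2,3}=-1$ they produce the scalars $\boldsymbol{v}_i^L R'(x_i)\boldsymbol{v}_i^R$ that enter the definition of $\mathbb{W}_n^{(i)}$, the sign at $x_1$ being absorbed into the sign convention chosen for $\xi_1$ throughout the example.

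Next, I would specialise the general identity of Proposition~\ref{kbb reducido} to $p=3$ and $q=1$. Because $q=1$, the outer double sum $\sum_{j=0}^{q-1}\sum_{l=0}^{j}$ collapses to the single term $j=l=0$, which furnishes the isolated contribution $-T_{n,n+1}A_n^{(a)}(x)\bigl(\mathbb{D}_{n+1}^{(i)}-\mathbb{W}_{n+1}^{(i)}-D_{n+1}(x)\boldsymbol{v}_i^R\bigr)$; under the shorthand $T^{0}_n:=T_{n,n+1}$ this is exactly the $-T^{0}_n P_n^{(a)}(x)(\cdots)$ term displayed in the claim. The inner double sum $\sum_{j=1}^{3}\sum_{l=1}^{j}$ already has the summation range listed in the proposition once each entry $T_{n+l,n-3+j}$ is rewritten in the $T^{j}_{n+l}$ notation introduced for the Jacobi--Piñeiro recurrence coefficients.

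With this structural skeleton in place, I would evaluate the scalar factors $\mathbb{D}^{(i)}_m$, $\mathbb{W}^{(i)}_m$ and $D_m(x)\boldsymbol{v}_i^R$ componentwise using Definition~\ref{D and W bb}. For $i=0$ these evaluate to $\mathbb{D}_m^{(0)}=D_m^{(2)}(0)$, $\mathbb{W}_m^{(0)}=\xi_0 P_m(0)$, and $D_m(x)\boldsymbol{v}_0^R=D_m^{(2)}(x)$; for $i=1$ the analogous identities yield $D_m^{(3)}(1)$, $\xi_1 P_m(1)$ and $D_m^{(3)}(x)$. Using that $q=1$ forces $B_n=P_n$ and that the type I polynomials satisfy $A_n^{(a)}=P_n^{(a)}$, each bracket $\mathbb{D}_m^{(i)}-\mathbb{W}_m^{(i)}-D_m(x)\boldsymbol{v}_i^R$ collapses to exactly the combination $D_m^{(2)}(0)-\xi_0 P_m(0)-D_m^{(2)}(x)$ (respectively $D_m^{(3)}(1)-\xi_1 P_m(1)-D_m^{(3)}(x)$) appearing in the statement.

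The only non-routine aspect of the argument is bookkeeping: one must keep straight which subdiagonal each $j$ labels in the $T^{j}_{n+l}$ shorthand and correctly track the minus sign produced by $R'(1)_{2,3}=-1$ into the normalisation of $\xi_1$. Once these notational points are fixed, the two displayed identities drop out by substitution, with no analytic input beyond Proposition~\ref{kbb reducido} required.
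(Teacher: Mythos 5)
Your route is the one the paper intends: this Proposition is stated without proof precisely because it is meant to be the specialization of Proposition \ref{kbb reducido} to the Jacobi--Piñeiro data, and your preparatory steps are correct --- $\det R(x)=x(1-x)$ with simple zeros $0$ and $1$, $\boldsymbol v_0^R=e_2$, $\boldsymbol v_1^R=e_3$, $\boldsymbol v_0^L=e_1^\top$, $\boldsymbol v_1^L=e_2^\top$, $\boldsymbol v_i^LR'(x_i)\boldsymbol v_i^R=\pm1$ (the sign at $x=1$ absorbed into $\xi_1$, as the paper does in its $\tau$-determinants), the collapse of the $q$-sum to the single term $-T_{n,n+1}P_n^{(a)}(x)\bigl(D^{(2)}_{n+1}(0)-\xi_0P_{n+1}(0)-D^{(2)}_{n+1}(x)\bigr)$, and the identifications $\mathbb D_m^{(0)}=D^{(2)}_m(0)$, $\mathbb W_m^{(0)}=\xi_0P_m(0)$, $D_m(x)\boldsymbol v_0^R=D^{(2)}_m(x)$ together with their analogues at $x=1$.

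The gap is the step you wave through as bookkeeping, and it is exactly where the identity fails to ``drop out by substitution''. In Proposition \ref{kbb reducido} the generic term of the first double sum is $T_{n+l,\,n-3+j}\,P^{(a)}_{n+l}(x)\,\bigl(\mathbb D^{(i)}_{n+j-3}-\mathbb W^{(i)}_{n+j-3}-D_{n+j-3}(x)\boldsymbol v_i^R\bigr)$: the subscript of the bracket equals the \emph{column} index of the recurrence coefficient, while the polynomial index equals its row. Since the paper's superscript notation means $T^{j}_{m}=T_{m,m-j}$, one has $T_{n+l,\,n-3+j}=T^{\,l+3-j}_{n+l}$, \emph{not} $T^{j}_{n+l}$; renaming the superscript as the summation label turns the specialization into $\tfrac1x\bigl[\sum_{j=1}^{3}\sum_{l=1}^{j}T^{j}_{n+l}P^{(a)}_{n+l}(x)\bigl(D^{(2)}_{n+l-j}(0)-\xi_0P_{n+l-j}(0)-D^{(2)}_{n+l-j}(x)\bigr)-T^{0}_{n}P^{(a)}_{n}(x)(\cdots)_{n+1}\bigr]$, with bracket subscript $n+l-j$, not $n-j$. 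The two expressions are not reindexings of each other: the correct specialization only involves the bracket indices $n-2,n-1,n$, each tied to the column of its $T$ entry (for instance $T^{3}_{n+3}=T_{n+3,n}$ must multiply the bracket at index $n$), whereas the displayed statement uses $n-1,n-2,n-3$ and pairs $T^{3}_{n+3}$ with index $n-3$. So your substitution, carried out honestly, proves the formula with $n+l-j$ in place of $n-j$ (strongly suggesting a typo in the printed Proposition), but it does not prove the statement as displayed; as written, your claim that the sums ``already have the summation range listed in the proposition'' conceals precisely this mismatch, which you needed either to resolve or to flag.
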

Then the Christoffel formulas read
\begin{Proposition}
    For $n > 1$, the Christoffel type formula for $\check{P}^{(a)}_n(x)$ can be restated as: 
    \begin{multline*}
        \begin{bNiceMatrix}
            \check{P}_{n}^{(1)}(x) \\
            \check{P}_{n}^{(2)}(x) \\
            \check{P}_{n}^{(3)}(x) 
        \end{bNiceMatrix} = \frac{1}{\tau_n}\sum_{i=1}^{3} \sum_{j=1}^i T^j_{n+j} \begin{bNiceMatrix}
            x P_{n+j}^{(2)}(x) \\
            (1-x) P_{n+j}^{(3)}(x) \\
            P_{n+j}^{(1)}(x)
        \end{bNiceMatrix} \\ \times \begin{vNiceMatrix}
            D_{n-1}^{(2)}(0)-P_{n-1}(0)\xi_0 & D_{n-1}^{(3)}(1)-P_{n-1}(1)\xi_1 \\[3pt]
            \frac{1}{x}(D_{n-i}^{(2)}(0)-P_{n-i}(0)\xi_0 - D_{n-i}^{(2)}(x)) & \frac{1}{x-1}(D_{n-i}^{(3)}(1)-P_{n-i}(1)\xi_1 - D_{n-i}^{(3)}(x))
        \end{vNiceMatrix} \\[3pt] + \frac{1}{\tau_n} T^0_n \begin{bNiceMatrix}
            x P_{n+1}^{(2)}(x) \\
            (1-x) P_{n+1}^{(3)}(x) \\
            P_{n+1}^{(1)}(x)
        \end{bNiceMatrix}\begin{vNiceMatrix}
            D_{n-1}^{(2)}(0)-P_{n-1}(0)\xi_0 & D_{n-1}^{(3)}(1)-P_{n-1}(1)\xi_1 \\[3pt]
            D_{n+1}^{(2)}(0)-P_{n+1}(0)\xi_0 - D_{n+1}^{(2)}(x) & D_{n+1}^{(3)}(1)-P_{n+1}(1)\xi_1 - D_{n+1}^{(3)}(x)
        \end{vNiceMatrix} \\ + \frac{1}{\tau_n}\begin{bNiceMatrix}
            D_{n-1}^{(3)}(1)-P_{n-1}(1)\xi_1 \\
            -D_{n-1}^{(2)}(0)+P_{n-1}(0)\xi_0 \\
            0
        \end{bNiceMatrix}.
    \end{multline*}
\end{Proposition}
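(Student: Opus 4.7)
The plan is to derive the restated formula by combining the determinantal Christoffel formula from Theorem \ref{ExplicitFormulas} with the compact expression for $\mathbb{K}_a^{[n-2],(i)}(x)$ furnished by the preceding proposition. The setting is $p=3$, $q=1$, $M=2$ with simple eigenvalues $x_1=0$ and $x_2=1$; the right eigenvectors are $\boldsymbol v_1^R = \boldsymbol e_2$ and $\boldsymbol v_2^R = \boldsymbol e_3$, and the perturbation matrix has entries $R_{1,2}(x)=x$, $R_{2,3}(x)=1-x$, $R_{3,1}(x)=1$, with all other entries zero.

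First I would specialize Theorem \ref{ExplicitFormulas} to this example. Because each row of $R(x)$ has a single nonzero entry, the $\tilde a$-sum collapses and one obtains, for each $a\in\{1,2,3\}$, a single $2\times 2$ determinantal expression for $\check P_n^{(a)}(x)$ whose last row is $\bigl(\mathbb{K}_{\tilde a(a)}^{[n-2],(i)}(x)+\mathbb{P}_{\tilde a(a)}^{(i)}(x)\bigr)_{i=1,2}$, where $\tilde a(a)\in\{2,3,1\}$. Expanding this $2\times 2$ determinant and separating $\mathbb{P}^{(i)}_{\tilde a}(x)=v^R_{i;\tilde a}/(x-x_i)$ from the kernel part, the prefactor $R_{a,\tilde a(a)}(x)\in\{x,1-x,1\}$ multiplies $1/(x-x_i)$. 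Since only $v^R_{1;2}$ and $v^R_{2;3}$ are nonzero (both equal to $1$), the contributions $x\cdot 1/x$ and $(1-x)\cdot 1/(x-1)$ reduce the pole part to $\pm 1$, producing the constant vector $\tau_n^{-1}\bigl(D_{n-1}^{(3)}(1)-P_{n-1}(1)\xi_1,\,-D_{n-1}^{(2)}(0)+P_{n-1}(0)\xi_0,\,0\bigr)^{\!\top}$ shown at the end of the claim.

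Next I would substitute the compact form of $\mathbb{K}_a^{[n-2],(i)}(x)$ from the preceding proposition, with $n$ therein replaced by $n-2$. That expression is a short sum consisting of a double sum over the three subdiagonals of $T$ plus one term coming from the single superdiagonal ($q=1$). The overall factor $1/(x-x_i)$ inside $\mathbb{K}$ combines with the prefactor $R_{a,\tilde a(a)}(x)$ exactly as in the previous paragraph, converting the vector $\bigl(x\,\mathbb{K}^{[\cdot],(1)}_{2},(1-x)\,\mathbb{K}^{[\cdot],(2)}_{3},\mathbb{K}^{[\cdot],(\cdot)}_{1}\bigr)^{\!\top}$ into the polynomial triples $\bigl(xP^{(2)}_{n+j},(1-x)P^{(3)}_{n+j},P^{(1)}_{n+j}\bigr)^{\!\top}$ appearing in the claim. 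The Cauchy-transform differences $\frac{1}{x-x_i}\bigl(\mathbb{D}_{n-i}^{(\cdot)}-\mathbb{W}_{n-i}^{(\cdot)}-D_{n-i}^{(\cdot)}(x)\bigr)$ are precisely the entries of the last row of the $2\times 2$ determinants displayed; the summation constraints in Proposition \ref{kbb reducido} translate into the pattern $1\le j\le i\le 3$, and the unique $q$-term produces the separate $T^0_n$ summand.

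The main obstacle is the bookkeeping of indices when passing between the summation conventions of Proposition \ref{kbb reducido} and the final labelling $T^j_{n+j}$, ensuring that the reindexing is globally consistent across the six terms of the double sum and that the rational combinations land in the correct rows of the $2\times 2$ determinants. Once the index map is fixed and the factors of $R(x)$ have been distributed across the pole and kernel parts, the remaining work reduces to routine expansions of $2\times 2$ determinants and collection of terms, yielding the stated identity.
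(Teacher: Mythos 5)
Your proposal follows exactly the route the paper intends: the paper gives no separate argument for this proposition beyond substituting the compact kernel expressions of the immediately preceding proposition (Proposition \ref{kbb reducido} specialized to this example) into the determinantal Christoffel formula coming from Theorem \ref{ExplicitFormulas}, which is precisely your plan, including the collapse of the $\tilde a$-sum for this $R(x)$ and the reduction of the pole terms $x\cdot\tfrac1x$ and $(1-x)\cdot\tfrac{1}{x-1}$ to the constant vector. The only caveat is the index bookkeeping you yourself flag (the truncation level $[n-2]$ of the kernel inside the determinant versus the indices displayed in the statement), which the paper likewise leaves implicit.
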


\section{On the Existence of Perturbed Orthogonality}
Previously, in Theorem \ref{ExistsTheorem}, we proved that if perturbed orthogonality exists, then the \(\tau\)-determinants are nonzero, leading to explicit Christoffel-type formulas for the perturbed polynomials. In this section, we will examine the reverse implication:

\begin{Theorem} \label{Non-cancellation}
	Perturbed orthogonality exists if the \(\tau\)-determinants are nonzero and there exist \(Np-r\) vector polynomials, \(\check{B}_n(x)\), satisfying: 
	\[
	\begin{aligned}
		\int_\Delta \sum_{b=1}^q \check{B}_n^{(b)}(x) \, \mathrm{d}\check{\mu}_{b,a}(x) \, x^l &= 0, & a &\in \{1,\dots,p\}, & l &\in \left\{0,\dots, \left\lceil\frac{n-a+2}{p}\right\rceil-1\right\},
	\end{aligned}
	\]
	for \(n \in \{0, \dots, Np-r-1\}\).
\end{Theorem}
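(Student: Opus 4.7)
The plan is to use the Christoffel-type formula \eqref{EqCheckB-AM} to construct the missing perturbed polynomials and then verify their orthogonality through a divisibility argument for matrix polynomials. For $0 \leq n < Np - r$, take $\check{B}_n$ from the hypothesis. For $n \geq Np - r$, define $\check{B}^{(b)}_n(x)$ by \eqref{EqCheckB-AM}; well-definedness and the correct degree follow because $\tau_{n-1} \neq 0$. Expanding the determinant along the last column gives $\check{B}^{(b)}_n(x) = B^{(b)}_n(x) + \sum_{m=n-Np+r}^{n-1}\Omega_{n,m}\, B^{(b)}_m(x)$ with $\Omega_{n,m}$ $b$-independent ratios of $\tau$-determinants. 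Cramer's rule applied to the same determinant—by replacing the last column with any of the preceding $Np-r$ columns (producing a matrix with a repeated column and hence a vanishing determinant)—yields the key cancellation identity
\[
\sum_{m=n-Np+r}^{n} \Omega_{n,m}\bigl(\mathbb{D}^{(i)}_m - \mathbb{W}^{(i)}_m\bigr) = 0, \qquad i\in\{1,\dots,M\},
\]
together with the natural Jordan-chain extension in the arbitrary-multiplicity setting of \S\ref{GeneralSpectra}.

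To verify orthogonality, introduce the perturbed Cauchy transform $\check{D}_n(z) := \int_\Delta \check{B}_n(x)\,\d\check{\mu}(x)/(z-x)$. A computation in the spirit of Proposition \ref{ConnectCD}, using $\d\check{\mu}(x)R(x)=\d\mu(x)$ together with the explicit mass structure of $\d\check{\mu}$, produces
\[
\check{D}_n(z)\,R(z) = \sum_{m=n-Np+r}^n \Omega_{n,m}\, D_m(z) + P_n(z), \qquad P_n(z) := \int_\Delta \check{B}_n(y)\,\d\check{\mu}(y)\,\tfrac{R(z)-R(y)}{z-y},
\]
with $P_n$ a matrix polynomial of degree at most $N-1$ in $z$. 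Taking the limit $z\to x_i$ on the right eigenvector $\boldsymbol{v}^R_i$ (and analogously along the canonical Jordan system in the general case, via Lemma \ref{LemmaEigenvectors}), the residue of the Dirac mass in $\check{D}_n$ contributes $\sum_m \Omega_{n,m}\mathbb{W}^{(i)}_m$, while the regular part reduces the right-hand side to $\sum_m \Omega_{n,m}\mathbb{D}^{(i)}_m + P_n(x_i)\boldsymbol{v}^R_i$. Matching with the cancellation identity yields $P_n(x_i)\boldsymbol{v}^R_i = 0$ for every eigenvalue (and its Jordan-chain analogues). By the matrix-polynomial divisibility theorem (\cite{MatrixPoly,Manas_Rojas}), this is precisely the condition for $R$ to left-divide $P_n$, so $P_n(z) = R(z)\,Q_n(z)$ for some matrix polynomial $Q_n$.

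Substituting back and rearranging,
\[
\check{D}_n(z)\,R(z) - R(z)\,Q_n(z) = \sum_{m=n-Np+r}^n \Omega_{n,m}\,D_m(z),
\]
and one expands both sides at $z=\infty$. The right-hand side inherits the decay $D_m(z)\,e_a = O\bigl(z^{-\lceil(m-a+2)/p\rceil -1}\bigr)$ from the original step-line orthogonality of $B$, while the left-hand side encodes the moments $\int x^l \sum_b \check{B}^{(b)}_n\,\d\check{\mu}_{b,a}$ through the coefficients of $R$ and $Q_n$. Solving the resulting recurrence order by order in the negative powers of $z$—using the block structure \eqref{CondicionesMatricesLideresFinales} of the leading and sub-leading coefficients of $R$ to invert the linear combinations modulo the rank-deficient part of $R_N$—transfers the decay onto the moments of $\check{B}_n$ and yields the required step-line orthogonality for all $a\in\{1,\dots,p\}$ and $l<\lceil(n-a+2)/p\rceil$. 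The main obstacle is precisely this extraction: because $R_N$ is not invertible, the asymptotic recurrence is not directly solvable, and one must carefully exploit the normalization \eqref{CondicionesMatricesLideresFinales} together with the already-established orthogonalities at intermediate orders in order to propagate the decay exactly to the step-line range, with the arbitrary-multiplicity case adding the further bookkeeping of derivatives of Dirac deltas in the residue computations at the eigenvalues.
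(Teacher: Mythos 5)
There is a genuine gap, and you flag it yourself: the decisive step of the theorem is never carried out. Your construction correctly produces $\check{B}_n=B_n+\sum_{m=n-Np+r}^{n-1}\Omega_{n,m}B_m$ for $n\ge Np-r$, the identity $\check{D}_n(z)R(z)=\sum_m\Omega_{n,m}D_m(z)+P_n(z)$ with $\deg P_n\le N-1$, and (for simple eigenvalues) the root conditions $P_n(x_i)\boldsymbol{v}_i^R=0$ via the Cramer cancellation. But from there the proof must convert the divided identity into the exact step-line decay of $\check{D}_n(z)\boldsymbol{e}_a$, i.e.\ into the moment conditions $\int \sum_b\check{B}_n^{(b)}\,\d\check{\mu}_{b,a}\,x^l=0$ for $l<\lceil (n-a+2)/p\rceil$. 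This is precisely where the rank deficiency $r$ of $R_N$ interacts with the step-line counting: since $R_N$ is singular, the quotient in the division need not vanish, $R^{-1}(z)$ does not decay uniformly across components, and the decay of $\sum_m\Omega_{n,m}D_m(z)R^{-1}(z)$ does not transfer componentwise to $\check{D}_n$. You describe this as ``the main obstacle'' and say what ``one must'' do with the normalization \eqref{CondicionesMatricesLideresFinales}, but you do not do it; the theorem is essentially reduced to an unproved claim. A secondary sign that this part was not set up is the divisibility step: from $P_n(x_i)\boldsymbol{v}_i^R=0$ (a $1\times p$ row polynomial annihilating the right Jordan chains of $R$) the correct conclusion is $P_n(z)=Q_n(z)R(z)$; your $P_n(z)=R(z)Q_n(z)$ is dimensionally inconsistent, and the subsequent rearrangement inherits the error.

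A further gap is that verifying moment conditions for the constructed $\check{B}_n$, even if completed, does not by itself establish that ``perturbed orthogonality exists'', i.e.\ that the $LU$ factorization of $\check{\mathscr{M}}$ exists: one also needs the non-degeneracy of the diagonal terms $\int\check{B}_n\,\d\check{\mu}\,X_{[p]}^\top\boldsymbol{e}_n$ (equivalently, a dual family $\check{A}$ and a biorthogonality identity), and your argument never produces either; relatedly, the hypothesis on the first $Np-r$ polynomials is only ``taken'' and plays no role in your treatment of $n\ge Np-r$, whereas it cannot be dispensed with. The paper's route is quite different and avoids all asymptotics at infinity: it sets $\check{S}=\Omega S$, $\check{B}=\check{S}X_{[q]}$, obtains the biorthogonality $\int_\Delta\check{B}\,\d\check{\mu}\,\check{A}=I$ directly from $\d\check{\mu}\,R=\d\mu$ together with Lemma \ref{LemmaEigenvectors} (the point masses annihilate $R(x)A(x)$), reformulates the hypothesis as the degree structure of the first blocks of $\check{A}$, and then runs a column-by-column degree induction on $\check{A}\Omega=R(x)A(x)$ (Equation \eqref{Eqdegree}), where $\tau_n\neq0$ enters only through the nonvanishing of the entries $\Omega_{i+Np-r,i}$ from \eqref{EqOmegaExplicit}. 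If you want to salvage your Cauchy-transform approach, the missing extraction step would have to be proved with the same care the paper devotes to its degree induction, and the biorthogonality/non-degeneracy issue addressed separately.
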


\begin{proof}
   Define two new families of matrix polynomials:
   \[
\begin{aligned}
	   \tilde{A}(x) &= R(x)A(x), & \tilde{A}(x) &= \check{A}(x)\Omega,
\end{aligned}
   \]
   where \(R(x)\) is a polynomial matrix satisfying Condition \eqref{CondicionesMatricesLideresFinales} and \(\Omega\) is a unitriangular matrix with entries given by Equation \eqref{EqOmegaExplicit}. Since \(\Omega\) is invertible and the \(\tau\)-determinants are nonzero, it follows that the matrix is invertible. Note that no assumptions are made regarding the structure of these matrix families. As \(\sigma(R) \cap \Delta = \varnothing\), we obtain:
   \[
   S^{-1} = \int_\Delta X_{[q]}(x) \, \mathrm{d}\mu(x) \, A(x) = \int_\Delta X_{[q]}(x) \, \mathrm{d}\mu(x) \, R^{-1}(x) \check{A}(x)\Omega.
   \]
   The expression:
   \[
   X_{[q]}(x) \left[ \sum_{i=1}^{M} \sum_{j=1}^{s_i} \sum_{k=0}^{\kappa_{i,j}-1} \boldsymbol{\xi}_{i,j,k}(x) \left( \sum_{l=0}^k \frac{(-1)^l}{l!} \boldsymbol{v}_{i,j;k-l}^L \delta^{(l)}(x-x_i) \right) \right] \check{A}(x)\Omega,
   \]
   is strictly zero. Therefore:
   \[
   \begin{aligned}
   	S^{-1} &= \begin{multlined}[t][.8\textwidth]
   		\int_\Delta X_{[q]}(x) \, \mathrm{d}\mu(x) \, R^{-1}(x) \check{A}(x)\Omega \\
    + X_{[q]}(x) \left[ \sum_{i,j,k} \boldsymbol{\xi}_{i,j,k}(x) \left( \sum_{l=0}^k \frac{(-1)^l}{l!} \boldsymbol{v}_{i,j;k-l}^L \delta^{(l)}(x-x_i) \right) \right] \check{A}(x)\Omega 
   	\end{multlined}\\
   	&= \int_\Delta X_{[q]}(x) \, \mathrm{d}\check{\mu}(x) \, \check{A}(x)\Omega.
   \end{aligned}
   \]
   Define the lower unitriangular matrix \(\check{S} \coloneq \Omega S\) and the polynomial family \(\check{B}(x) \coloneq \check{S} X_{[q]}(x)\), so the last expression becomes:
   \[
   \int_\Delta \check{B}(x) \, \mathrm{d}\check{\mu}(x) \, \check{A}(x) = I.
   \]
   What has been proven so far is the biorthogonality condition for the families of matrix polynomials $\check{A}(x)$ and $B(x)$ as well as the degree structure of $\check{B}(x)$. The theorem's hypothesis concerning the orthogonality relations of $\check{B}_i(x)$ (for $i<{Np-r}$), under the biorthogonality condition, can be equivalently reformulated as an assumption about the degree structure of $\check{A}(x)$ as follows. For \(n \in \{0, \dots, N-2\}\), the block
   \[
   \mathcal{\check{A}}_n(x) =\coloneq
   \begin{bNiceMatrix}
   	\check{A}_{np}^{(1)}(x) & \Cdots & \check{A}_{np+p-1}^{(1)}(x) \\ 
   	\Vdots & & \Vdots \\ 
   	\check{A}_{np}^{(p)}(x) & \Cdots & \check{A}_{np+p-1}^{(p)}(x)
   \end{bNiceMatrix},
   \]
   has diagonal terms of degree \(n\), terms above the main diagonal  of degree at most \(n\), and lower diagonal terms of degree at most \(n-1\). For \(\mathcal{\check{A}}_{N-1}(x)\), we have: 
   \[
   \mathcal{\check{A}}_{N-1}(x) = \begin{bNiceMatrix}[first-row,last-row,nullify-dots]
   	| & \Cdots & \text{known} & \Cdots & | & & & & & \\
   	\check{A}_{(N-1)p}^{(1)}(x) & &\Cdots & &\check{A}_{Np-r-1}^{(1)}(x) & \check{A}_{Np-r}^{(1)}(x) & & \Cdots & &\check{A}_{Np-1}^{(1)}(x) \\ 
   	\Vdots & & & & \Vdots & \Vdots & & & & \Vdots \\ 
   	\check{A}_{(N-1)p}^{(p)}(x) & & \Cdots & & \check{A}_{Np-r-1}^{(p)}(x) & \check{A}_{Np-r}^{(p)}(x) & & \Cdots & & \check{A}_{Np-1}^{(p)}(x) \\
   	& & & & & | & \Cdots & \text{unknown} & \Cdots & |    
   \end{bNiceMatrix}.
   \]
   Namely, the known structure extends to the first $p-r-1$ columns (this column will have entries $\check{A}_{Np-r-1}^{(a)}$. However, no degree structure is assumed for $\check{A}_{n}(x)$ with $n \geq {Np-r}$. 
   
   Let us examine the relation \(\check{A}(x)\Omega = R(x)A(x)\) and focus on the first block of size \(p \times p\):
   \begin{equation} \label{Eqdegree}
   	\mathcal{\check{A}}_0(x)\left[ \Omega_{0,0} \right]_p + \mathcal{\check{A}}_1(x)\left[ \Omega_{1,0} \right]_p + \dots + \mathcal{\check{A}}_{N-1}(x)\left[ \Omega_{N-1,0} \right]_p + \mathcal{\check{A}}_N(x)\left[ \Omega_{N,0} \right]_p = R(x)\mathcal{A}_0.
   \end{equation}

    By construction, \(\left[ \Omega_{N,0} \right]_p\) and \(\left[ \Omega_{N-1,0} \right]_p\) are of the form:
	    \[
\begin{aligned}
	\left[\Omega_{N-1,0}\right]_p & = \begin{bNiceMatrix}
		\Omega_{Np-p,0} & \Cdots & & & & \Omega_{Np-p,p-1} \\
		\Vdots & & & & & \Vdots \\
		\Omega_{Np-r,0}	& & & & & 	 \\
		0 && & & &   \\
		\Vdots & \Ddots & \Ddots[shorten-end=-7pt]&&&\\
		0 & \Cdots & 0 & \Omega_{Np-1,r-1} & \Cdots & \Omega_{Np-1,p-1}
	\end{bNiceMatrix}, \\ \left[\Omega_{N,0}\right]_p & = \begin{bNiceMatrix}
		0 & \Cdots & 0 & \Omega_{Np,r} & \Cdots & \Omega_{Np,p-1} \\
		\Vdots & & & \Ddots & \Ddots[shorten-end=-5pt] & \Vdots \\
		& & & & & \Omega_{Np+p-r-1,p-1} \\[10pt]
		& & & & & 0 \\
		& & & & & \Vdots \\
		0 & & \Cdots & & & 0
	\end{bNiceMatrix},
\end{aligned}
\]
    with entries of the form $\Omega_{i+Np-r,i}$ nonzero due to the hypothesis that $\tau_n \neq 0$. The product \( R(x)\mathcal{A}_0 \) is a matrix polynomial as described in Condition \ref{LeadingMatrixConditions}. We focus our attention on terms of degree $N-1$. By hypothesis, it holds that
    \[
    \mathcal{\check{A}}_0(x)\left[ \Omega_{0,0} \right]_p + \mathcal{\check{A}}_1(x)\left[ \Omega_{1,0} \right]_p + \dots + \mathcal{\check{A}}_{N-2}(x)\left[ \Omega_{N-2,0} \right]_p = O(x^{N-2}).
    \]
    
    Considering the first column of Equation \eqref{Eqdegree} and focusing on the terms of degree \( N-1 \), we have:
    \begin{equation*}
        \begin{bNiceMatrix}
            \Omega_{Np-p,0}\check{A}_{Np-p}^{(1)}(x) + \dots + \Omega_{Np-r,0}\check{A}_{Np-r}^{(1)}(x) \\
            \Vdots \\
            \Omega_{Np-p,0}\check{A}_{Np-p}^{(p-r)}(x) + \dots + \Omega_{Np-r,0}\check{A}_{Np-r}^{(p-r)}(x) \\[7pt]
            \Omega_{Np-p,0}\check{A}_{Np-p}^{(p-r+1)}(x) + \dots + \Omega_{Np-r,0}\check{A}_{Np-r}^{(p-r+1)}(x) \\[7pt]
            \Omega_{Np-p,0}\check{A}_{Np-p}^{(p-r+2)}(x) + \dots + \Omega_{Np-r,0}\check{A}_{Np-r}^{(p-r+2)}(x) \\
            \Vdots \\
            \Omega_{Np-p,0}\check{A}_{Np-p}^{(p-r+1)}(x) + \dots + \Omega_{Np-r,0}\check{A}_{Np-r}^{(p-r+1)}(x)
        \end{bNiceMatrix} = \begin{bNiceMatrix}
            * \\
            \Vdots \\
            * \\[7pt]
            A_0^{(1)} \\[7pt]
            0 \\
            \Vdots \\
            0
        \end{bNiceMatrix} x^{N-1} + O(x^{N-2}),
    \end{equation*}
    where the \( * \) notation denotes constant terms, which could be zero. The product $\mathcal{\check{A}}_N(x)\left[ \Omega_{N,0} \right]_p$ does not appear in the first column, since $\left[ \Omega_{N,0} \right]_p$ has the first $r$ columns equal to zero. 
    
    It follows that the terms \( \check{A}_{Np-r}^{(a)} \) with \( a \in\{1,\dots, p-r \}\) can have degree up to \( N-1 \), as other terms of degree \( N-1 \) also appear in the summation. 
    For the case \( a = p-r+1 \), since the terms of the form $A_n^{(p-r+1)}$ are of degree \( N-2 \) for $n < Np-r$ and \( \check{A}_0^{(1)} \neq 0 \), the term \( \check{A}_{Np-r}^{(p-r+1)} \) must have degree \( N-1 \). Below these terms, for \( p-r+1 < a \leq p \), there are no terms of degree \( N-1 \) on the left-hand side, implying that \( \check{A}_{Np-r}^{(a)} \) can have degree up to \( N-2 \). Repeating this analysis \( r \) times for subsequent columns of Equation \eqref{Eqdegree}, completes the degree structure for \( \mathcal{\check{A}}_{N-1} \).

    Next, let us examine the \( r+1 \)-th column and focus on the terms of degree \( N \):

    \begin{equation*}
        \Omega_{Np,r} 
        \begin{bNiceMatrix}
            \check{A}_{Np}^{(1)}(x) \\
            \Vdots \\
            \check{A}_{Np}^{(p)}(x)
        \end{bNiceMatrix} = \begin{bNiceMatrix}
            A_r^{(r+1)} \\
            0 \\
            \Vdots \\
            0
        \end{bNiceMatrix} x^N + O(x^{N-1}).
    \end{equation*}

   Clearly, \( \check{A}_{Np}^{(1)}(x) \) must have degree \( N \), whereas the remaining terms can have a degree of at most \( N-1 \). By iterating this reasoning as described, we can conclude that up to \( n = Np+p-r \), all vector polynomials possess the required degree structure.
   
   Thus, the proof is complete. Under the assumption \( \tau_n \neq 0 \) for \( n \in \mathbb{N} \) and the existence of the orthogonality conditions for $B_i(x)$, we derive certain orthogonality relations for \( \check{A}(x) \) as well as their degree structure.
   \end{proof}
   
   Observe that assuming the orthogonality exists up to \( n = {Np-r}-1 \) does not guarantee its existence for all degrees, as it may truncate at some point. However, we have demonstrated that, provided the \( \tau \)-determinants are nonzero, the orthogonality does not truncate.
   
   It is now evident that the existence of perturbed orthogonality, provided that the $\tau$-determinants are nonzero, is related to the formulas for $\check{A}(x)$ and $\check{B}(x)$ in the case where $n < {Np-r}$. Let us examine the case for $n < {Np-r}$ in greater detail. The connection formulas for $\check{B}(x)$, as presented in Proposition \ref{ConnectAB}, are given by:
   \begin{equation} \label{EqcheckBB}
   \check{B}^{(b)}_n(x) = B^{(b)}_n(x) + \Omega_{n,n-1}B^{(b)}_{n-1}(x) + \cdots + \Omega_{n,0}B^{(b)}_0(x),
   \end{equation}
   and, as noted in Remark \ref{Remark1}, the method employed throughout this work does not provide explicit information about the $\Omega$ components when $n < {Np-r}$. However, under the assumption of the existence of perturbed orthogonality, the following relation holds:
\begin{equation}\label{EqOrthogo}
\begin{aligned}%
   \int_\Delta \check{B}_n(x) \, \mathrm{d}\check{\mu}(x) \, \left( X_{[p]}^\top(x) \boldsymbol{e}_l \right) = 0, \quad l \in \{0, \dots, n-1\}.
   \end{aligned}
\end{equation}
   
   \begin{Definition}
   We introduce the following notation:
   \begin{equation}\label{eq:I}
   	\mathbb{I}_{i,l} \coloneq \int_\Delta B_i(x) \, \mathrm{d}\check{\mu}(x) \, \left( X_{[p]}^\top(x) \boldsymbol{e}_l \right).
   \end{equation}
   \end{Definition}
   
      Referring to Equation \eqref{eq:perturbed measure} and \eqref{eq:I}, we have for the case of simple zeros $x_i$
   \[
   \mathbb{I}_{i,l} = \int_\Delta B_i(x) \left(
   \d\mu(x) R^{-1}(x) + \sum_{i=1}^{M} \boldsymbol{\xi}_i(x) \boldsymbol{v}_{i}^L \delta(x-x_i)\right) \d\check{\mu}(x) \, \left( X_{[p]}^\top(x) \boldsymbol{e}_l \right).
   \]
   The adjugate matrix $\operatorname{adj} R(x)$ of the matrix polynomial $R(x)$ is a matrix polynomial such that
   \[
   R^{-1}(x) = \frac{1}{\det R(x)}\operatorname{adj} R(x).
   \]
   Recalling the partial fraction decomposition
   \[
   \begin{aligned}
   	\frac{1}{\det R(x)} &= \sum_{j=1}^{M} \frac{C_j}{x-x_j}, &
   	C_j \coloneq \prod_{k \neq j} (x_j - x_k),
   \end{aligned}
   \]
   we can write, cf. \cite{Uva},
   \[
   \begin{aligned}
   	\mathbb{I}_{i,l} &= \int_\Delta B_i(x) \sum_{j=1}^{M} \left(
   	\d\mu(x) \frac{C_j}{x-x_j} \operatorname{adj} R(x)
   	+ \boldsymbol{\xi}_j(x) \boldsymbol{v}_{j}^L \delta(x-x_j)\right)  \left( X_{[p]}^\top(x) \boldsymbol{e}_l \right)\\
   	&= \sum_{j=1}^{M} 
   	C_j	\int_\Delta B_i(x) \frac{\d\mu(x)}{x-x_j}\operatorname{adj} R(x)
   	X_{[p]}^\top(x) \boldsymbol{e}_l + \sum_{j=1}^{M} B(x_j) \boldsymbol{\xi}_j(x_j) \boldsymbol{v}_{j}^L X_{[p]}^\top(x_j) \boldsymbol{e}_l.
   \end{aligned}
   \]
   This applies to the case of simple eigenvalues. It can be noted that the generalization to cases with higher multiplicities involves merely considering a more general partial fraction decomposition.
   
   \begin{Definition}
   We define new $\tilde{\tau}$-determinants as follows:
\[   \begin{aligned}
   	\tilde{\tau}_{n} &\coloneq 
   	\begin{vNiceMatrix}
   		\mathbb{I}_{0,0} & \Cdots & \mathbb{I}_{0,n} \\
   		\Vdots &        & \Vdots \\
   		\mathbb{I}_{n,0} & \Cdots & \mathbb{I}_{n,n}
   	\end{vNiceMatrix}, 
   & n &\in \{0, \dots, {Np-r}-1\}.
   \end{aligned}\]
 
   \end{Definition}

   Assuming that these determinants are nonzero, substituting Equation \eqref{EqcheckBB} into \eqref{EqOrthogo} yields:
   \begin{equation*}
   -\mathbb{I}_{n,l} = 
   \begin{bNiceMatrix}
   	\Omega_{n,0} & \Cdots & \Omega_{n,n-1}
   \end{bNiceMatrix}
   \begin{bNiceMatrix}
   	\mathbb{I}_{0,l} \\
   	\Vdots \\
   	\mathbb{I}_{n-1,l}
   \end{bNiceMatrix}.
   \end{equation*}
   Taking into account that $l \in \{0, \dots, n-1\}$, we obtain:
   \begin{equation*}
   -\begin{bNiceMatrix}
   	\mathbb{I}_{n,0} & \dots & \mathbb{I}_{n,n-1}
   \end{bNiceMatrix}
   \begin{bNiceMatrix}
   	\mathbb{I}_{0,0} & \Cdots & \mathbb{I}_{0,n-1} \\
   	\Vdots &        & \Vdots \\
   	\mathbb{I}_{n-1,0} & \Cdots & \mathbb{I}_{n-1,n-1}
   \end{bNiceMatrix}^{-1}
   = \begin{bNiceMatrix}
   	\Omega_{n,0} & \Cdots & \Omega_{n,n-1}
   \end{bNiceMatrix}.
   \end{equation*}
   
   Next, let us study the orthogonality conditions when $l = n$. In this case:
   \begin{equation*}
   \int_\Delta \check{B}_n(x) \, \mathrm{d}\check{\mu}(x) \, \left( X_{[p]}^\top(x) \boldsymbol{e}_n \right) = \alpha_n \neq 0,
   \end{equation*}
   and substituting the $\Omega$ components, we have:
   \begin{equation*}
   \mathbb{I}_{n,n} - 
   \begin{bNiceMatrix}
   	\mathbb{I}_{n,0} & \dots & \mathbb{I}_{n,n-1}
   \end{bNiceMatrix}
   \begin{bNiceMatrix}
   	\mathbb{I}_{0,0} & \Cdots & \mathbb{I}_{0,n-1} \\
   	\Vdots &        & \Vdots \\
   	\mathbb{I}_{n-1,0} & \Cdots & \mathbb{I}_{n-1,n-1}
   \end{bNiceMatrix}^{-1}
   \begin{bNiceMatrix}
   	\mathbb{I}_{0,n} \\
   	\Vdots \\
   	\mathbb{I}_{n-1,n}
   \end{bNiceMatrix} 
   = \frac{\tilde{\tau}_n}{\tilde{\tau}_{n-1}} = \alpha_n \neq 0.
   \end{equation*}
   Therefore, the orthogonality relations are satisfied provided $\tilde{\tau}_{n-1}$ and $\tilde{\tau}_n$ are nonzero.
   
   When $n = {Np-r}$, the orthogonality conditions become:
   \begin{equation*}
   -\begin{bNiceMatrix}
   	\mathbb{I}_{{Np-r},0} & \dots & \mathbb{I}_{{Np-r},{Np-r}-1}
   \end{bNiceMatrix}
   \begin{bNiceMatrix}
   	\mathbb{I}_{0,0} & \Cdots & \mathbb{I}_{0,{Np-r}-1} \\
   	\Vdots &        & \Vdots \\
   	\mathbb{I}_{{Np-r}-1,0} & \Cdots & \mathbb{I}_{{Np-r}-1,{Np-r}-1}
   \end{bNiceMatrix}^{-1}
   = \begin{bNiceMatrix}
   	\Omega_{{Np-r},0} & \Cdots & \Omega_{{Np-r},{Np-r}-1}
   \end{bNiceMatrix}.
   \end{equation*}
   
   This result, together with Proposition \ref{OmegaComponents}, establishes that the non-cancellation of $\tilde{\tau}_{{Np-r}-1}$ is equivalent to the non-cancellation of $\tau_{{Np-r}-1}$. As the $\tau$-determinants were not defined for $n \in \{0, \dots, {Np-r}-2\}$, and disregarding $\tilde{\tau}_{{Np-r}-1} \neq 0$ as a condition, we simplify the notation by dropping the $\tilde{}$ and referring to all determinants as $\tau_n$ for $n \in \mathbb{N}_0$.
   
	Theorem \ref{ExistsTheorem} extends to these $\tau$-determinants for $n \in \{0, \dots, {Np-r}-2\}$.
\begin{Proposition}
If perturbed orthogonality exists, then $\tau_n \neq 0$ for $n \in \mathbb{N}_0$.
\end{Proposition}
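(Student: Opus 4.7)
The plan is to combine the previously established Theorem \ref{ExistsTheorem} with the determinantal analysis developed in the preceding paragraphs for the \(\tilde\tau\)-determinants, and then use the identification $\tilde\tau_n = \tau_n$ (established at $n = Np-r-1$ and adopted as a notational convention) to cover all $n \in \mathbb{N}_0$. Concretely, Theorem \ref{ExistsTheorem} already disposes of the range $n \geq Np-r-1$, so the remaining work is to show $\tau_n \neq 0$ for $n \in \{0,\dots, Np-r-2\}$.

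For $n$ in this small range, I would argue by induction on $n$. The hypothesis that the perturbed orthogonality $\check B_n$ exists means that $\check B_n(x)$ has the correct degree structure and satisfies the orthogonality relations \eqref{EqOrthogo}. Substituting the connection formula \eqref{EqcheckBB} into \eqref{EqOrthogo} gives, for each $l \in \{0,\dots,n-1\}$, the linear system
\[
- \mathbb{I}_{n,l} = \begin{bNiceMatrix}\Omega_{n,0} & \Cdots & \Omega_{n,n-1}\end{bNiceMatrix}\begin{bNiceMatrix}\mathbb{I}_{0,l}\\ \Vdots \\ \mathbb{I}_{n-1,l}\end{bNiceMatrix},
\]
whose coefficient matrix has determinant $\tilde\tau_{n-1}$. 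The existence and uniqueness of the $\Omega_{n,0},\dots,\Omega_{n,n-1}$ required by the perturbed connection matrix thus forces $\tilde\tau_{n-1} \neq 0$. Combining this with the $l=n$ relation and the computation $\tilde\tau_n / \tilde\tau_{n-1} = \alpha_n \neq 0$ already displayed in the text gives $\tilde\tau_n \neq 0$ as well, completing the inductive step.

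Finally, I would observe that the $l = Np-r-1$ identity derived just before Theorem \ref{Non-cancellation} shows that the non-cancellation of $\tilde\tau_{Np-r-1}$ is equivalent to that of $\tau_{Np-r-1}$, so the two sequences of determinants agree under the convention adopted at the end of the discussion. Together with Theorem \ref{ExistsTheorem} (which covers $n \geq Np-r-1$), this yields $\tau_n \neq 0$ for all $n \in \mathbb{N}_0$.

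The only genuine subtlety, and what I expect to be the main obstacle, is verifying carefully that the inductive argument does not require any additional regularity beyond the existence of the perturbed orthogonality: one must check that the $\Omega_{n,j}$ appearing in \eqref{EqcheckBB} are truly determined by the orthogonality relations (i.e., that $\check B_n$ is uniquely determined up to the normalization implicit in its monic block structure), and that the step $\tilde\tau_n / \tilde\tau_{n-1} = \alpha_n$ really produces a nonzero $\alpha_n$ whenever $\check B_n$ is a legitimate orthogonal object (not a degenerate linear combination of lower-degree polynomials). Once these two points are checked, the chain of implications above closes the proof.
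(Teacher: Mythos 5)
Your argument is essentially the paper's proof: the paper also reduces to $n\in\{0,\dots,Np-r-2\}$ (the larger range being covered by the extension of Theorem \ref{ExistsTheorem}), uses the identity $\tau_n=\alpha_n\tau_{n-1}$ coming from the $\tilde\tau$-determinant computation together with the identification $\tilde\tau_n=\tau_n$, and concludes that $\tau_n=0$ would force $\alpha_n=\int_\Delta \check B_n\,\d\check\mu\,(X_{[p]}^\top \boldsymbol e_n)=0$, contradicting the existence of the perturbed orthogonality. The only cosmetic difference is that you phrase it as a direct induction while the paper argues by a minimal counterexample, and your aside that solvability of the linear system for the $\Omega_{n,j}$ "forces" $\tilde\tau_{n-1}\neq 0$ is superfluous (and not quite a valid inference on its own), but harmless since the inductive hypothesis already supplies $\tilde\tau_{n-1}\neq 0$.
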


\begin{proof}
	Assume orthogonality exists despite $\tau_n = 0$ for some $n \in \{0, \dots, {Np-r}-2\}$. Since $\tau_n = \alpha_n \tau_{n-1}$ and $\tau_{n-1} \neq 0$, the only possibility is $\alpha_n = 0$. However,
	\begin{equation*}
		\int_\Delta \check{B}_n(x) \, \mathrm{d}\check{\mu}(x) \, \left( X_{[p]}^\top(x) \boldsymbol{e}_n \right) = \alpha_n,
	\end{equation*}
	and orthogonality would not be satisfied.
\end{proof}
	The assumption in Theorem \ref{Non-cancellation} regarding the existence of certain orthogonality conditions for $B_i(x)$ can be reformulated as the non-cancellation of the $\tau$-determinants for $n \in \{0, \dots, {Np-r}-2\}$. Consequently:
\begin{Theorem}
If $\tau_n \neq 0$ for $n \in \mathbb{N}_0$, then the perturbed orthogonality exists.
\end{Theorem}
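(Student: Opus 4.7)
The plan is to reduce the statement to Theorem~\ref{Non-cancellation}, whose hypothesis already assumes the non-cancellation of the $\tau$-determinants. What is missing in order to invoke that theorem is only the existence, for $n\in\{0,\dots,Np-r-1\}$, of vector polynomials $\check{B}_n(x)$ satisfying the low-degree orthogonality relations listed there. The paragraphs preceding the statement have set up exactly the linear algebra needed to produce those polynomials; the proof amounts to assembling those ingredients and checking that the unified $\tau$-notation is consistent across the boundary $n=Np-r-1$.

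Concretely, I would adopt the ansatz \eqref{EqcheckBB}, namely $\check{B}_n(x)=B_n(x)+\sum_{k=0}^{n-1}\Omega_{n,k}B_k(x)$, and impose the orthogonality \eqref{EqOrthogo} against $X_{[p]}^\top(x)\boldsymbol{e}_l$ for every $l\in\{0,\dots,n-1\}$. In the notation \eqref{eq:I} this is an inhomogeneous linear system in the $Omega_{n,0},\dots,\Omega_{n,n-1}$, with coefficient matrix $\bigl(\mathbb{I}_{i,l}\bigr)_{0\le i,l\le n-1}$ of determinant $\tilde{\tau}_{n-1}$. Under the convention introduced in the text, which merges the $\tilde{\tau}_n$ with the $\tau_n$ for $n<Np-r$, the hypothesis $\tau_{n-1}\neq 0$ ensures that this matrix is invertible, so the $\Omega_{n,k}$ are uniquely determined and the resulting $\check{B}_n(x)$ satisfies the required vanishing orthogonality conditions.

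To confirm that each $\check{B}_n$ produced in this way is a genuine degree-$n$ biorthogonal element and not a degenerate object, I would compute the diagonal pairing $\alpha_n=\int_{\Delta}\check{B}_n(x)\,\d\check{\mu}(x)\bigl(X_{[p]}^\top(x)\boldsymbol{e}_n\bigr)$. The calculation already displayed after \eqref{EqOrthogo} yields $\alpha_n=\tau_n/\tau_{n-1}$, which is nonzero by the standing hypothesis. Thus the polynomials $\check{B}_0,\dots,\check{B}_{Np-r-1}$ exist, have the correct degree structure, and satisfy the orthogonality conditions required in the hypothesis of Theorem~\ref{Non-cancellation}. Applying that theorem then produces the full perturbed biorthogonal families for all $n\geq Np-r$.

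The only genuinely delicate point—and the one I would spell out with care—is the bookkeeping that reconciles the two definitions of $\tau$. For $n\geq Np-r-1$ the $\tau_n$ were defined spectrally through the $\mathbb{D}^{(i)}$ and $\mathbb{W}^{(i)}$; for smaller $n$ they coincide with the $\tilde{\tau}_n$ built from the $\mathbb{I}_{i,j}$. The equivalence of the two definitions at $n=Np-r-1$ has been established in the preceding paragraph as a direct consequence of Proposition~\ref{OmegaComponents}, and the preceding Proposition shows that necessity of non-cancellation propagates to the small-$n$ regime. Once this identification is in place, no further obstacle remains and the argument closes by a clean citation of Theorem~\ref{Non-cancellation}.
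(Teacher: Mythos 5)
Your proposal is correct and follows essentially the same route as the paper: you build the low-degree $\check{B}_n(x)$ via the ansatz \eqref{EqcheckBB}, solve the linear system \eqref{EqOrthogo} whose coefficient matrix has determinant $\tau_{n-1}$ (in the merged $\tilde{\tau}$/$\tau$ convention), check $\alpha_n=\tau_n/\tau_{n-1}\neq 0$, and then invoke Theorem~\ref{Non-cancellation}, which is exactly the content of the paper's brief proof together with the discussion preceding it.
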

\begin{proof}
	The assumption of non-cancellation of these determinants ensures the desired biorthogonality relations for $\check{B}_n(x)$ when $n < {Np-r}$, satisfying the conditions of Theorem \ref{Non-cancellation}.
\end{proof}

\section*{Conclusions and Outlook}

In this work, we have extended our previous study on general Christoffel transformations \cite{Manas_Rojas} to explore general Geronimus transformations for mixed multiple orthogonal polynomials. These transformations, characterized by regular matrix polynomials that are neither required to be monic nor constrained by the rank of their leading coefficients, were applied via both right and left multiplication to a matrix of measures. This approach led to Christoffel-type formulas that effectively describe how the perturbed orthogonal polynomials are related to the original ones. Importantly, we have also proven that the existence of the perturbed orthogonality is equivalent to the non-cancellation of certain $\tau$-determinants associated with the non-perturbed orthogonality.
The influence of these transformations on the Markov--Stieltjes matrix functions is also explored. As an example, we study the Jacobi--Piñeiro orthogonal polynomials with three weights.

A promising avenue for further exploration is the study of Uvarov transformations, which can be understood as a hybrid of Christoffel and Geronimus transformations, potentially offering a more comprehensive framework for perturbing orthogonality. We are also investigating the impact of these Uvarov transformations on the Markov–Stieltjes matrix functions associated with the corresponding matrix of measures.

\section*{Acknowledgments}

The authors acknowledge funding from research project [PID2021-122154NB-I00], \textit{Ortogonalidad y Aproximación con Aplicaciones en Machine Learning y Teoría de la Probabilidad}, funded by MICIU/AEI/10.13039/501100011033 and by “ERDF A Way of making Europe”. They are also grateful to JEF Díaz for pointing out the value of the Jacobi–Piñeiro polynomials at $x=1$. Finally, MM wishes to acknowledge the guidance of Francisco Marcellán in the study and research of Geronimus and Uvarov transformations.


\begin{thebibliography}{99}
	\bibitem{adler}
M. Adler, P. van Moerbeke and P. Vanhaecke,
\emph{Moment matrices and multi-component KP, with applications to random matrix theory},
Communications in Mathematical Physics \textbf{286} (2009)
1-38.

\bibitem{ahi} N. I. Akhiezer, \emph{Classical moment problem and some related questions in analysis,} Translated by N. Kemmer Hafner Publishing Co., New York 1965.

\bibitem{Apery} R. Apéry,
\emph{Irrationalité de $\zeta(2)$ et $\zeta(3)$}.
Astérisque \textbf{61} (1979) 11-13.

\bibitem{Aptekarev_Kaliaguine_Lopez}
A. Aptekarev, V. Kaliaguine, G. López Lagomasino, and I. Rocha,
\emph{On the limit behavior of recurrence coefficients for multiple orthogonal polynomials},
Journal of Approximation Theory \textbf{139} (2006)	346-370.

\bibitem{AAGMM} C. Álvarez-Fernández, G. Ariznabarreta, J. C. García-Ardila, M. Mañas, and F. Marcellán, \emph{Christoffel transformations for matrix orthogonal polynomials in the real line and the non-Abelian 2D Toda lattice hierarchy}, International Mathematics Research Notices \textbf{2017} (2017) 128-1341.

\bibitem{afm} C. Álvarez-Fernández, U. Fidalgo Prieto, and M. Mañas,
\emph{Multiple orthogonal polynomials of mixed type: Gauss--Borel factorization and the multi-component 2D Toda hierarchy}, Advances in Mathematics \textbf{227} (2011) 1451-1525. 

\bibitem{AGMM} G. Ariznabarreta, J. C.  García-Ardila, M.  Mañas, and F. Marcellán. \emph{Non-Abelian integrable hierarchies: matrix biorthogonal polynomials and perturbations}, Journal of Physics A: Mathematical \& Theoretical \textbf{51} (2018) 205204.

\bibitem{AGMM2} G. Ariznabarreta, J. C. García-Ardila, M. Mañas, and F. Marcellán, \emph{Matrix biorthogonal polynomials on the real line: Geronimus transformations}, Bulletin of Mathematical Sciences \textbf{9} (2019) 1950007 (68 pages). 

\bibitem{AM} G. Araznibarreta and M. Mañas, \emph{A Jacobi type Christoffel-Darboux formula for multiple orthogonal polynomials of mixed type}, Linear Algebra and its Applications \textbf{468} (2015) 154–170.


\bibitem{Ball_Rivoal} K.  Balland and T. Rivoal,
\emph{Irrationalité d’une infinité de valeurs de la fonction zêta aux entiers impairs},
Inventiones mathematicae \textbf{146} (2001) 193-207.

\bibitem{Bleher_Kuijlaars} P. M. Bleher, A. B.J. Kuijlaars,
\emph{Random matrices with external source and multiple orthogonal polynomials},
International Mathematics Research Notices \textbf{2004} (2004)
109–129.

\bibitem{CRM} 
A. Branquinho, J. E. F. Díaz, A. Foulquié-Moreno, and M. Mañas, \emph{Markov Chains and Multiple Orthogonality,} To appear in ``Orthogonal Polynomials, Special Functions and Applications -- Proceedings of the 16th International Symposium, Montreal, Canada, In honor to Richard Askey'' in the CRM Series in Mathematical Physics, 2024.

\bibitem{finite} 
A. Branquinho, J. E. F. Díaz, A Foulquié-Moreno, and Manuel Mañas,
\emph{Finite Markov chains and multiple orthogonal polynomials}, 2023, to appear in Journal of Computational and Applied Mathematics, 
\hyperref{https://arxiv.org/abs/2308.00182}{}{}{arXiv:2308.00182}. 

\bibitem{JP}
A. Branquinho, J. E. F. Díaz, A. Foulquié-Moreno, M. Mañas, and C. Álvarez-Fernández,
\emph{Jacobi--Piñeiro Markov chains}, Revista de la Real Academia de Ciencias Exactas, Físicas y Naturales. Serie A. Matemáticas \textbf{118} (15) (2024).

\bibitem{hypergeometric} 
A. Branquinho, J. E. F. Díaz, A. Foulquié-Moreno, and M. Mañas,
\emph{Hypergeometric multiple orthogonal polynomials and random walks}, 2021. 
\hyperref{https://arxiv.org/abs/2107.00770 }{}{}{arXiv:2107.00770}.

\bibitem{ExamplesMOP} 
A. Branquinho, J. E. F. Díaz, A. Foulquié-Moreno, and M. Mañas,
\emph{Classical Multiple Orthogonal Polynomials For Arbitrary Number
Of Weights And Their Explicit Representation
}, 2024. 
\hyperref{https://doi.org/10.48550/arXiv.2404.13958 }{}{}{arXiv:2404.13958v2}.

\bibitem{bfm} A. Branquinho, A. Foulquié-Moreno, and M. Mañas, \emph{Multiple orthogonal polynomials: Pearson equations and Christoffel formulas},  Analysis and Mathematical Physics \textbf{12} (2022)  paper 129.

\bibitem{aim} A. Branquinho, A. Foulquié-Moreno, and M. Mañas, 
\emph{Spectral theory for bounded banded matrices with positive bidiagonal factorization and mixed multiple orthogonal polynomials}, Advances in Mathematics \textbf{434} (2023) 109313. 

\bibitem{laa} A. Branquinho, A. Foulquié-Moreno, and M. Mañas, 
\emph{Positive bidiagonal factorization of tetradiagonal Hessenberg matrices}, Linear Algebra and its Applications \textbf{667} (2023)
132–160. 

\bibitem{phys-scrip}  A. Branquinho, A. Foulquié-Moreno, and M. Mañas, 
\emph{Oscillatory banded Hessenberg matrices, multiple orthogonal polynomials and random walks},
Physica Scripta \textbf{98} (2023) 105223. 

\bibitem{Contemporary} A. Branquinho, A. Foulquié-Moreno, and M. Mañas, \emph{Banded matrices and their orthogonality},  to appear in Recent Progress in Special Functions, Galina Filipuk, editor. AMS Contemporary Mathematics, 2024.

\bibitem{BTP} A. Branquinho, A. Foulquié-Moreno, and M. Mañas, 
\emph{Banded totally positive matrices and normality for mixed multiple orthogonal polynomials}, 2024. \hyperref{https://arxiv.org/abs/2404.13965}{}{}{arXiv:2404.13965}.

\bibitem{Bre} C. Brezinski,  \emph{Padé-type approximation and general orthogonal polynomials},  Birkhäuser Verlag, Basel-Boston, Mass. 1980.

\bibitem{Bue1} M. I. Bueno and F. Marcellán, \emph{Darboux transformation and perturbation of linear functionals}, Linear Algebra and its Applications  \textbf{384} (2004) 215-242.

\bibitem{Chi} T. S. Chihara, \emph{An Introduction to Orthogonal Polynomials}. In :\emph{ Mathematics and its Applications Series}, Vol. \textbf{13}. Gordon and Breach Science Publishers, New York-London-Paris, 1978.

\bibitem{christoffel} E. B. Christoffel, \emph{Über die Gaussische Quadratur und eine Verallgemeinerung derselben}, Journal für die Reine und Angewandte Mathematik \textbf{55} (1858) 61-82.

\bibitem{Evi_Arno}
E. Daems and A. B. J. Kuijlaars, \emph{Multiple orthogonal polynomials of mixed type and non-intersecting Brownian motions}, Journal of Approximation Theory \textbf{146} (2007)	91-114.

\bibitem{darboux2} G. Darboux,\emph{ Sur une proposition relative aux \'{e}quations lin\'{e}aires},  Comptes rendus hebdomadaires des séances de l'Académie des Sciences  \textbf{94} (1882) 1456-1459.

\bibitem{DereM}  M. Derevyagin, J. C. García-Ardila, and F. Marcellán, \emph{Multiple Geronimus transformations}, Linear Algebra and its Applications  \textbf{454} (2014) 158-183.

\bibitem{ulises}	U. Fidalgo Prieto, A. López-García, G. López-Lagomasino, and V. N. Sorokin,
\emph{Mixed type multiple orthogonal polynomials for two Nikishin systems},
Constructive Approximation \textbf{32} (2010) 255-306.

\bibitem{ulises2} U. Fidalgo Prieto, S. M. Peralta, J. M. Ceniceros,
\emph{Mixed type multiple orthogonal polynomials: perfectness and interlacing properties of zeros},
Linear Algebra and its Applications \textbf{438} (2013) 1229-1239.

\bibitem{Gaut} W. Gautschi, \emph{An algorithmic implementation of the generalized Christoffel theorem} in Numerical Integration, edited by G. Hämmerlin, Int. S. Num. M. \textbf{57}, Birkhäuser, Basel,1982. 89-106.

\bibitem{Geronimus} J. Geronimus, \emph{On polynomials orthogonal with regard to a given sequence of numbers}, Comm.
Inst. Sci. Math. Mec. Univ. Kharkoff [Zapiski Inst. Mat. Mech.] (4) \textbf{17} (1940) 3-18.


\bibitem{MatrixPoly}  I. Gohberg, P. Lancaster, and L. Rodman, \emph{Matrix Polynomials}, Classics in Applied Mathematics \textbf{58} SIAM, 2009.

\bibitem{gru2} F. A. Grünbaum and L. Haine, \emph{Bispectral Darboux transformations: an extension of the Krall polynomials,} International Mathematics  Research Notices \textbf{8} (1997) 359-392.

\bibitem{gru} F. A. Grünbaum and M. Yakimov, \emph{Discrete bispectral Darboux transformations from Jacobi operators}, Pacific Journal of Mathematics \textbf{204} (2002) 395-431.

\bibitem{Ismail}
M. E. H. Ismail,
\emph{Classical and Quantum Orthogonal Polynomials in One Variable},
Cambridge University Press, Cambridge, 2005.

\bibitem{Karl} L. Karlberg and  H. Wallin, \emph{Padé type approximants and orthogonal polynomials for Markov-Stieltjes functions,} Journal of  Computational and Applied Mathematics \textbf{32} (1990) 153-157.


\bibitem{manas} M. Mañas, \emph{Revisiting biorthogonal polynomials. An LU factorization discussion.} EIBPOA: Iberoamerican school on orthogonal polynomials and applications 2018. SEMA SIMAI Springer Series. Edited by E.J. Huertas and F. Marcellán,  Springer Verlag, 2021.

\bibitem{Manas_Rojas} M.Mañas and M. Rojas, \emph{General Christoffel Perturbations for Mixed Multiple Orthogonal Polynomials}, \href{https://arxiv.org/abs/2405.11630}{arXiv:2405.11630}.

\bibitem{Maro} P. Maroni, \emph{ Sur la suite de polynômes orthogonaux associée á la forme $u = \delta_c +\lambda(x-c)^{-1}$}, Periodica Mathematica Hungarica \textbf{21} (1990) 223-248.

\bibitem{andrei_walter}
A. Martínez-Finkelshtein and W. Van Assche,
\emph{What is a multiple orthogonal polynomial?},
Notices of the AMS \textbf{63} (2016)
1029–1031. 

\bibitem{matveev} V. B. Matveev and  M. A. Salle, \emph{Differential-difference evolution equations. II: Darboux Transformation for the Toda lattice}, Letters in  Mathematical Physics \textbf{3} (1979) 425-429.

\bibitem{moutard} Th. F. Moutard,
\emph{Sur la construction des équations de la forme $(1/z)\partial^2 z/\partial x\partial y=\lambda(x,y)$
	qui admettenent une intégrale générale explicite}.
Journal de l'École Polytechnique \textbf{45} (1878) 1–11.

\bibitem{nikishin_sorokin}
E. M. Nikishin and V. N. Sorokin,
\emph{Rational Approximations and Orthogonality},
Translations of Mathematical Monographs \textbf{92},
American Mathematical Society, Providence, 1991.

\bibitem{Sze} G. Szeg\H{o}, \emph{Orthogonal Polynomials.} 4th ed., AMS Colloquium Publication Series, vol \textbf{23}, AMS., Providence RI, (1975).

\bibitem{Uva} V. B. Uvarov, \emph{The connection between systems of polynomials that are orthogonal with respect to different distribution functions,} USSR Computational and  Mathematical Physics \textbf{9} (1969) 25-36.

\bibitem{Yoon} G. Yoon,  \emph{Darboux transforms and orthogonal polynomials}, Bulletin of  Korean Mathematical Society \textbf{39} (2002) 359-376.

\bibitem{Zhe} A. Zhedanov, \emph{Rational spectral transformations and orthogonal polynomials}, Journal of Computational Applied Mathematics\textbf{ 85} (1997) 67-86.

\end{thebibliography}
\end{document}